\theoremstyle{plain}
\newtheorem{theorem}{Theorem}[section]
\theoremstyle{plain}
\newtheorem{lemma}{Lemma}[section]
\newtheorem{proposition}{Proposition}[section]
\newtheorem{definition}{Definition}[section]
\newtheorem{remark}{Remark}[section]
\renewcommand{\(}{\left(}
\renewcommand{\)}{\right)}
\renewcommand{\[}{\left[}
\renewcommand{\]}{\right]}
\DeclareMathOperator{\dist}{dist}
\newcommand{\To}{\longrightarrow}
\newcommand{\be} {\begin{equation}}
	\newcommand{\ee} {\end{equation}}
\newcommand{\bea} {\begin{eqnarray}}
	\newcommand{\eea} {\end{eqnarray}}
\newcommand{\Bea} {\begin{eqnarray*}}
	\newcommand{\Eea} {\end{eqnarray*}}
\newcommand{\al} {\alpha}
\newcommand{\ba} {\beta}
\newcommand{\de} {\delta}
\newcommand{\ga} {\gamma}
\newcommand{\Ga} {\Gamma}
\newcommand{\Om} {\Omega}
\newcommand{\De} {\Delta}
\newcommand{\la} {\lambda}
\newcommand{\no} {\nonumber}
\newcommand{\lab} {\label}
\newcommand{\va} {\varphi}
\newcommand{\var} {\varepsilon}
\newcommand{\R}{\mathbb R}
\newcommand{\Rn}{\mathbb R^N}
\newcommand{\deb}{\rightharpoonup}
\newcommand{\M}{\mathcal{M}}
\newcommand{\tM}{\tilde{\mathcal{M}}}
\newcommand{\hd}{\dot{H}^{1}(\Rn)}
\newcommand{\authorfootnotes}{\renewcommand\thefootnote{\@fnsymbol\c@footnote}}%
\numberwithin{equation}{section} \allowdisplaybreaks
\begin{document}
	\title{Stability of Hardy-Sobolev inequality}

\date{}

	\author[S. Chakraborty]{Souptik Chakraborty\textsuperscript{1}}
\address{\textsuperscript{1}Department of Mathematics, Indian Institute of Technology Bombay, Powai, Mumbai-400076, India}
\email{soupchak@math.iitb.ac.in}

	\keywords{Bubble, Hardy-Sobolev inequality, Linearized equation, Lyapunov-Schmidt reduction, Quantitative estimates, Stability, Struwe's decomposition.}
	
		\keywords{Bubble, Hardy-Sobolev inequality, Linearized equation, Quantitative estimates, Stability, Struwe's decomposition.}
	
	\begin{abstract}
		Given $N\geq 3,$ we consider the critical Hardy-Sobolev equation $-\De u-\frac{\ga}{|x|^2}u=\frac{|u|^{2^*(s)-2}u}{|x|^s}$ in $\Rn\setminus \{0\},$ where $0<\ga<\ga_{H}:=\left(\frac{N-2}{2}\right)^2,\,s\in (0,2)$ and $2^*(s)=\frac{2(N-s)}{(N-2)}.$
        We prove a stability estimate for the corresponding Hardy-Sobolev inequality in the spirit of Bianchi-Egnell \cite{BE}.
		\medskip
		Also, we obtain a Struwe-type decomposition \cite{S} for the corresponding Euler-Lagrange equation. Finally, we prove a quantitative bound for one bubble, namely $\dist(u,\mathcal{M})\lesssim \Ga(u)$ in the spirit of Ciraolo-Figalli-Maggi \cite{CFM}.
		\medskip
		
		\noindent
		{\emph{\bf 2010 MSC:} 26D10, 35A23, 35B33, 35B35, 35J20, 35J61.}
	\end{abstract}

	\maketitle
	
	\section{introduction}
	In recent years, interest has been abundant in the sharp quantitative stability of various functional and geometric inequalities and their applications to the calculus of variations, differential geometry, diffusion flow, and many other fields. Some classical examples are the Euclidean isoperimetric inequality, Brunn-Minkowski inequality, Sobolev inequality, Gagliardo-Nirenberg-Sobolev inequality, and	Caffarelli-Kohn-Nirenberg inequality to name a few from the non-exhaustive list of references \cite{BE, BL, CF, CA, CFMP, CFM, FJ, WW}. 
 
	\subsection{The Hardy-Sobolev inequality on $\Rn$}
 	In this article, our focus is to study the quantitative stability of the following Hardy-Sobolev inequality on $\Rn$ :
	\be\tag{$\mathcal{P}_{\ga,s}$}\label{HSI}
	\mu_{\ga,s}(\Rn)\left(\int_{\Rn}\frac{|u|^{2^*(s)}}{|x|^s}\,{\rm d}x\right)^{\tfrac{2}{2^*(s)}} \leq \int_{\Rn}\left(|\nabla u|^2-\ga\frac{|u|^2}{|x|^2}\right)\,{\rm d}x, \text{ for all  }u\in C^{\infty}_{c}(\Rn),
	\ee
	where $N\geq 3,\,0<\ga<\ga_{H}:=\tfrac{(N-2)^2}{4},\,0<s<2,\,2^*(s)=\tfrac{2(N-s)}{N-2}$. By density, one can extend this inequality for the Homogeneous Sobolev space $\hd$ which is the completion of $C^{\infty}_{c}(\Rn)$ with respect to $\|\nabla \cdot\|_{L^2(\Rn)}$. Here, $\mu_{\ga,s}$ is the best constant defined by
	\be\label{BCHSI}
	\mu_{\ga,s}=\mu_{\ga,s}(\Rn) := \inf_{u\in\dot{H}^1(\Rn)\setminus \{0\}}\frac{\int_{\Rn}\left(|\nabla u|^2-\ga\frac{|u|^2}{|x|^2}\right)\,{\rm d}x}{\left(\int_{\Rn}\frac{|u|^{2^*(s)}}{|x|^s}\,{\rm d}x\right)^{\tfrac{2}{2^*(s)}}}.
	\ee
    When $\ga=0,\,s=0$, \eqref{HSI} becomes the classical Sobolev inequality which is defined in \eqref{SI} and the optimal value in this case is known:
    \be\no
    \mu_{0,0}(\Rn)= S(\Rn) := \frac{N(N-2)\omega_{N}^{2/N}}{4},
    \ee
    where $\omega_N$ is the volume of the standard $N$-sphere $\mathbb{S}^N$ in $\mathbb{R}^{N+1}$. The positive minimizers of the Sobolev inequality are of the form $cU[z,\la](x)= c\la^{-\tfrac{N-2}{2}}U(\tfrac{x-z}{\la})$ for $c>0,\,\la>0$ and $z\in\Rn$ (see \cite{AT, TG}), where 
    \be\label{BSM}
    U(x)= \frac{\left(N(N-2)\right)^{\tfrac{N-2}{4}}}{(1+|x|^2)^{\tfrac{N-2}{2}}}.
    \ee
    Thus, the minimizers corresponding to Sobolev inequality form a $(N+2)$-dimensional manifold
    \be\label{MOSM}
    \tilde{\M}:= \left\{ cU[z,\la] : c\in\mathbb{R}\setminus \{0\},\,z\in\Rn,\,\la>0\right\}\subset \hd\setminus\{0\}.
    \ee

    When $\ga=0,\, s=2$, then equation \eqref{HSI} becomes the famous Hardy inequality, where the best constant
    \be\label{HI}
    \ga_{H}(\Rn):= \mu_{0,2}(\Rn) = \frac{(N-2)^2}{4}, 
    \ee
    is never achieved on any subset of $\Rn$ including the whole Euclidean plane. If $\ga=0,\,0<s<2$ then the best constant $\mu_{0,s}(\Rn)$ and the extremal functions are known (see Catrina-Wang \cite{CW}, Ghoussoub-Yuan \cite{GY}, Lieb \cite{LAM}). More precisely,
    \be\no
    \mu_{0,s}(\Rn)= (N-2)(N-s)\left(\frac{\omega_{N-1}}{2-s}.\frac{\Ga^2(\tfrac{N-s}{2-s})}{\Ga^2(\tfrac{2N-2s}{2-s})}\right)^{\tfrac{2-s}{N-s}},
    \ee
    and a function $u\in\hd\setminus \{0\}$ is an extremal for $\mu_{0,s}(\Rn)$ if and only if $u=cV_{s}^{\la}$, for some $c\in\R\setminus\{0\}$ and $\la>0$ where,
    \be\no
    V_{s}^{\la}(x) := \left(\frac{\la^{\tfrac{2-s}{2}}}{\la^{2-s}+|x|^{2-s}}\right)^{\tfrac{N-2}{2-s}}. 
    \ee
    In this case, it is important to note that, the functions $V_{s}^{\la}$ concentrate at $0$ when $\la\to 0$.
    \medskip
    
    In full generalities, i.e., when $s\in [0,2]$ and $\ga \in (-\infty, \ga_{H}),$ after a suitable change of variables, the Hardy-Sobolev inequality is equivalent to the Caffarelli-Kohn-Nirenberg inequality (see \cite{CKN}) which says, there is a constant $C=C(a,b,N)>0$ such that
    \be\label{CKNI}
    \left(\int_{\Rn}\frac{|u|^q}{|x|^{bq}}\,{\rm d}x\right)^{\tfrac{2}{q}} \leq C \int_{\Rn} \frac{|\nabla u|^2}{|x|^{2a}}\,{\rm d}x, \quad \text{ for all }u\in C^{\infty}_{0}(\Rn),
    \ee
    where
    \be\no
    -\infty <a < \frac{N-2}{2},\quad 0\leq b-a \leq 1,\quad \text{ and   } q=\frac{2N}{N-2+2(b-a)}.
    \ee
    In full generality, that is when $0<\ga<\ga_H,\, s\in (0,2)$, the explicit value of $\mu_{\ga,s}(\Rn)$ is known (Dolbeault et al. \cite{DELT}, also see Ghoussoub-Robert \cite{GR}) and is given by
    \be\no
    \mu_{\ga,s}(\Rn)= \[(N-2)^2-4\ga\]^{\tfrac{1}{2^*(s)}+\tfrac{1}{2}}\mathcal{D}_{s},
    \ee
    where,
    \be\no
    \mathcal{D}_{s} := \left[\frac{2\pi^{\tfrac{N}{2}}}{\Ga(\tfrac{N}{2})}\right]^{\tfrac{2-s}{N-s}}\left(\tfrac{2^*(s)}{2}\right)^{\tfrac{2}{2^*(s)}}\left[\frac{\Ga\left(\tfrac{N-s}{2-s}\right)\Ga\left(\tfrac{N+2-2s}{2-s}\right)}{\Ga\left(\tfrac{2(N-s)}{2-s}\right)}\right]^{\tfrac{2-s}{N-s}}.
    \ee
    The extremizing problem \eqref{HSI} is invariant under dilation: if $U_{\ga,s}$ is an extremizer of \eqref{BCHSI}, the $U_{\ga,s}^{\la}(x):= \la^{-\tfrac{N-2}{2}}U_{\ga,s}(\tfrac{x}{\la})$ is also an extremizer of \eqref{BCHSI}, for every $\la>0$ and conversely, any extremals of $\mu_{\ga,s}(\Rn)$ are precisely a nonzero scalar multiple times $U_{\ga,s}^{\la}$, where,
	\be\label{HSBS}
	 U_{\ga,s}(x) := \frac{C_{N,\ga,s}}{\left(|x|^{\ba_{-}(\ga)\tfrac{2-s}{N-2}} + |x|^{\ba_{+}(\ga)\tfrac{2-s}{N-2}}\right)^{\tfrac{N-2}{2-s}}},\,\text{for }x\in\Rn\setminus \{0\},
	\ee
	 and $\ba_{\pm}(\ga)= \frac{N-2}{2}\pm \var$, where $\var:= \sqrt{\frac{(N-2)^2}{4}-\ga}$. Thus the extremizers of the above inequality \eqref{HSI} constitute a 2-dimensional smooth manifold
	
	\be\label{MOM}
	\mathcal{M} := \left\{c U^{\la}_{\ga,s}:\,c\in \R\setminus \{0\},\, \la>0 \right\}\subseteq \hd\setminus \{0\}.
	\ee 
	
	 The special choice of the constant $C_{N,\ga,s}:=\Big[4\left(\tfrac{N-s}{N-2}\right)\var^2\Big]^{\tfrac{1}{2^*(s)-2}}$ ensures that $U_{\ga,s}\in C^2(\Rn\setminus \{0\})\setminus \{0\}$ satisfies the Euler-Lagrange equation corresponding to \eqref{HSI}
	\be\label{MainEq}
	-\De W -\frac{\ga}{|x|^2}W =\frac{W^{2^*(s)-1}}{|x|^s}\quad \text{in }\Rn\setminus \{0\}.
	\ee
	It follows from Chou-Chu \cite{CC} that the only possible finite energy positive solutions of \eqref{MainEq} are $U_{\ga,s}^{\la}$ for any $\la>0$.
	\subsection{The Euclidean Sobolev inequality}
	The classical Sobolev inequality says for $N\geq 3$,
	\be\label{SI}
	S\left(\int_{\Rn}|u|^{2^*}\,{\rm d}x\right)^{\tfrac{2}{2^*}} \leq \int_{\Rn}|\nabla u|^2\,{\rm d}x,
	\ee
	holds for all $u\in C^{\infty}_{c}(\Rn)$ and $2^*:=\frac{2N}{N-2}$ is the critical Sobolev exponent. The inequality continues to hold for all $u$ satisfying $\|\nabla u\|_{L^2(\Rn)}<+\infty$ and $\mathcal{L}^N\big{(}\{|u|>t\}\big{)}$ for every $t>0$,  where $\mathcal{L}^N$ denotes the Lebesgue measure on $\Rn$. Here, $S\equiv S(N)>0$ is the best constant in the Sobolev inequality defined by
	\be\label{SM}
	S=S(\Rn) := \inf_{u\in\hd\setminus \{0\}} \frac{\int_{\Rn}|\nabla u|^2\,{\rm d}x}{\left(\int_{\Rn}|u|^{2^*}\,{\rm d}x\right)^{\tfrac{2}{2^*}}}.
	\ee
	
	The inequality \eqref{SI} is invariant under the conformal group action comprised of translation, dilation and inversion which causes the non-compactness in the embedding $\dot{H}^1(\Rn)\xhookrightarrow[]{} L^{2^*}(\Rn)$. More precisely, if $U$ minimizes \eqref{SI}, then any nonzero constant scalar multiple times $U[z,\la](x)= \la^{-\tfrac{N-2}{2}}U(\tfrac{x-z}{\la})$ also minimizes \eqref{SI} for any $\la>0$ and for any $z\in\Rn$. Aubin \cite{AT}, Talenti \cite{TG} computed the explicit value of $S$ and they also showed that if $u$ is a minimizer for the Sobolev constant then $u$ is a nonzero scalar multiple of $U[z,\la]$, where $U$ is given by \eqref{BSM}.
	\subsection{The Quantitative Stability problem and known results}
	
	Following Br\'ezis-Lieb \cite{BL} we may ask if $u\in\hd$ almost gives equality in the above Sobolev inequality \eqref{SI}, does it imply that $u$ is close to the set of all positive minimizers of \eqref{SI}? Or in other words, can we measure the discrepancy in the Sobolev inequality
	\be\no
	\de_{Eu}(u):=\|\nabla u\|_{L^2(\Rn)}^2-S\|u\|_{L^{2^*}(\Rn)}^2
	\ee
	 for $u\in\hd$, from below in terms of the $\dot{H}^1$-distance from the solution manifold? The answer is affirmative (see Bianchi-Egnell \cite{BE}, also see Rey \cite{R}). In particular, they proved that
	\begin{theorem}\textup{(\cite[Theorem~1.1]{BE})}\label{BEMT}
		There exists a positive constant $\al>0$ ($\al$ depends only on $N$),
	\be\no
	\de_{Eu}(u)^{\tfrac{1}{2}} \geq \al\, \dist (u,\tilde{\mathcal{M}}).
	\ee	
	The exponent $\frac{1}{2}$ is sharp in the sense that the inequality fails if we replace $\frac{1}{2}$ by any other exponent strictly bigger than $\frac{1}{2}$ as we consider $\de_{Eu}(u)\to 0$.
	\end{theorem}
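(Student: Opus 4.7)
The plan is to argue by contradiction: suppose there exists a sequence $(u_n)\subset\hd$ with $\de_{Eu}(u_n)^{1/2}/\dist(u_n,\tM)\to 0$. Since $\de_{Eu}$, the Dirichlet norm and the manifold $\tM$ are all invariant under dilations, translations and nonzero scalar multiplication, we may normalize $\|\nabla u_n\|_{L^2}=1$. A first reduction handles the ``far from $\tM$'' regime: if $\dist(u_n,\tM)\geq\eta>0$ along a subsequence, then concentration--compactness (or, equivalently, a Struwe-type profile decomposition for Palais--Smale sequences of the Sobolev quotient) forces any sequence with $\de_{Eu}(u_n)\to 0$ to be asymptotically close to a single bubble in $\tM$, contradicting the positive lower bound on the distance. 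Hence we may assume $\dist(u_n,\tM)\to 0$.

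Since $\tM$ is a smooth $(N+2)$-dimensional manifold and $u_n$ is close to it, the nearest-point projection $\pi_n\in\tM$ is well-defined, and by applying the symmetries above we reduce to $\pi_n=\al_n U$ with $\al_n\to 1$. Writing $u_n=\al_n U+\rho_n$ with $\rho_n$ $\hd$-orthogonal to $T_{\al_n U}\tM$, one has $\|\nabla\rho_n\|_{L^2}=\dist(u_n,\tM)\to 0$. Orthogonality gives $\|\nabla u_n\|_{L^2}^2=\al_n^2\|\nabla U\|_{L^2}^2+\|\nabla\rho_n\|_{L^2}^2$, while a Taylor expansion (with cubic remainder controlled by Sobolev embedding) yields
\be\no
\|u_n\|_{L^{2^*}}^{2^*}=\al_n^{2^*}\|U\|_{L^{2^*}}^{2^*}+\tfrac{2^*(2^*-1)}{2}\al_n^{2^*-2}\!\!\int_{\Rn}U^{2^*-2}\rho_n^2\,dx+o\big(\|\nabla\rho_n\|_{L^2}^2\big);
\ee
the linear term drops out because $U$ satisfies the Euler--Lagrange equation and $\rho_n\perp_{\hd}U$. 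Raising to the power $2/2^*$ and subtracting leads, after a routine computation, to
\be\no
\de_{Eu}(u_n)=\|\nabla\rho_n\|_{L^2}^2-C_N\!\!\int_{\Rn}U^{2^*-2}\rho_n^2\,dx+o\big(\|\nabla\rho_n\|_{L^2}^2\big),
\ee
for an explicit positive constant $C_N$.

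The heart of the argument is a spectral gap for the linearized operator $L:=-\De-C_N\,U^{2^*-2}$ on $\hd$: one must show that $\ker L=T_U\tM=\Span\{U,\,\pa_\la U[0,\la]|_{\la=1},\,\pa_{z_i}U[z,1]|_{z=0}:i=1,\ldots,N\}$ and that, on the $\hd$-orthogonal complement of this $(N+2)$-dimensional subspace, the quadratic form $Q(\rho):=\|\nabla\rho\|_{L^2}^2-C_N\int_{\Rn}U^{2^*-2}\rho^2\,dx$ satisfies $Q(\rho)\geq c\,\|\nabla\rho\|_{L^2}^2$ for some $c>0$. Combined with the expansion above this gives $\de_{Eu}(u_n)\geq c\,\dist(u_n,\tM)^2(1-o(1))$, the desired contradiction. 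Sharpness of the exponent $1/2$ is then verified by testing $u=U+\eps\phi$ for any $\phi\perp_{\hd}T_U\tM$ with $L\phi\neq 0$: both $\de_{Eu}(u)^{1/2}$ and $\dist(u,\tM)$ scale like $\eps$, so any exponent strictly greater than $1/2$ would fail as $\eps\to 0$.

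The \textbf{main obstacle} is precisely this spectral gap. Identifying $\ker L$ amounts to the non-degeneracy of the Aubin--Talenti bubble and is typically handled by conformally mapping $\Rn$ to $\mathbb{S}^N$ and diagonalizing $L$ in spherical harmonics (Rey). The strict positivity of the next eigenvalue follows because $U^{2^*-2}\in L^{N/2}(\Rn)$ decays at infinity, making the multiplication operator $v\mapsto C_N\,U^{2^*-2}v$ a relatively compact perturbation of $-\De$; hence $L$ has purely discrete spectrum below any positive threshold and the gap above $0$ is strict.
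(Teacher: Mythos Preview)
Your proposal is correct and follows essentially the same strategy as the paper: note that the paper does not itself prove Theorem~\ref{BEMT} (it is only cited from \cite{BE}), but its proof of the Hardy--Sobolev analogue, Theorem~\ref{BEHSE} via Lemma~\ref{L2.1}, proceeds exactly as you describe---a local Taylor expansion around the nearest bubble combined with the spectral gap $\eta_3>\eta_2$ for the linearized operator, followed by a global contradiction argument using concentration--compactness to rule out the case $\dist(u_n,\tM)\not\to 0$. The only cosmetic difference is that the paper isolates the local estimate as a separate lemma before running the contradiction, and for sharpness it tests specifically with the third eigenfunction rather than a generic $\phi\perp T_U\tM$, but the content is the same.
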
 

	Next, consider the associated Euler-Lagrange equation, which is up to a scaling given by
	\be\label{ELSI}
	\De u + u|u|^{\tfrac{4}{N-2}}=0 \qquad \text{in }\Rn.
	\ee
	In this context, the question of stability can be now reformulated as, if $u$ almost solves \eqref{ELSI}, that is whenever the quantity $\tilde{\Gamma} (u):=\|\De u + |u|^{2^*-2}u\|_{\dot{H}^{-1}(\Rn)}$ is small enough then whether $u$ is quantitatively close to Aubin-Talenti bubble (\cite{AT, TG}) given by,
	\be\no\label{ATB}
	U[z,\la](x):= \Big(N(N-2)\Big)^{\tfrac{N-2}{4}}\frac{\la^{\tfrac{N-2}{2}}}{\Big(1+\la^2|x-z|^2\Big)^{\tfrac{N-2}{2}}},\quad\text{for }\la>0,\,z\in\Rn,
	\ee 
	which are precisely all the positive solutions of \eqref{ELSI} (\cite{GNN, CGS}). Here the question is much more delicate and the answer is "No" in general, unless some extra assumptions are considered. To see this, we consider,
	$u=\sum_{i=1}^{k}U[z_i,\la_i]$, where, $\min_{\stackrel{1\leq i,j\leq k}{i\neq j}}|z_i-z_j|\gg R$ for $R>0$ very large. Then,
	\be\no
	-\De u= \sum_{i=1}^{k} -\De U[z_i,\la_i] = \sum_{i=1}^{k} U[z_i,\la_i]^{2^*-1} \approx \Big(\sum_{i=1}^k U[z_i,\la_i]\Big)^{2^*-1} = u^{2^*-1}, 
	\ee
	which gives $\tilde{\Ga} (u)\approx 0$, but clearly $u$ is not close to one single bubble $U[z_i,\la_i]$ as all the bubbles have same energy $J(U[z_i,\la_i])=\frac{1}{N} S^{\tfrac{N}{2}}$, where $J$ is the energy functional associated to \eqref{ELSI} and is defined in \eqref{EFSI}. Here it is also significant to note that the interaction between the Aubin-Talenti bubbles $U[z_i,\la_i]$ and $U[z_j,\la_j]$ for $i\neq j$ can be estimated as,
	\be\no
	\langle U[z_i,\la_i],U[z_j,\la_j]\rangle_{\dot{H}^1(\Rn)} = \int_{\Rn}U[z_i,\la_i]^{2^*-1}U[z_j,\la_j]\approx |z_i-z_j|^{-(N-2)}\ll R^{-(N-2)}=o(1)\text{ as }R\to \infty,
	\ee
	which makes $\tilde{\Ga} (u)$ small enough.
	Another delicate issue is when we take out the sign restriction on $u$. Ding \cite{DWY} proved that there are infinitely many sign-changing solutions of \eqref{ELSI} with finite energy. So, even if $\tilde{\Ga}(u)$ is small enough, $u$ may not be close to the Aubin-Talenti bubble or the sum of the Aubin-Talenti bubbles. In fact, in a seminal work due to Struwe (\cite{S}, see also \cite{SM}), he proved that if we restrict our attention to non-negative functions on $\dot{H}^1(\Rn)$ the above-mentioned bubbling phenomenon is the only other possibility due to which stability fails.
	
	\begin{theorem}\textup{(\cite{S, SM})}
		Let $N\geq 3$ and $\nu\geq 1$ be positive integers. Let $\{u_n\}_{n\geq 1}$ be a non-negative sequence in $\dot{H}^1(\Rn)$ such that $\tilde{\Ga}(u_n)\to 0$ as $n\to\infty$ and $\{u_n\}$ satisfies
		\be\no
		(\nu-\frac{1}{2})S^{\tfrac{N}{2}}\leq \|u_n\|_{\dot{H}^1(\Rn)}\leq (\nu+\frac{1}{2})S^{\tfrac{N}{2}}.
		\ee
		Then there exists a sequence of $\nu$- tuple of points $(z_1^n,\cdots,z_{\nu}^n)$ on $\mathbb{R}^{N\nu}$ and a sequence of $\nu$-tuple $(\la_1^n,\cdots,\la_{\nu}^n)\in \R_{+}^{\nu}$ of positive real numbers such that
		\be\no
		\Bigg{\|}\nabla \left(u_n-\sum_{i=1}^{\nu} U[z_i^n,\la_i^n]\right)\Bigg{\|}_{L^2(\Rn)}\to 0\qquad \text{ as }n\to\infty,
		\ee
		Moreover, the Aubin-Talenti bubbles do not interact with each other at the $\dot{H}^1(\Rn)$ level. More precisely,
		\bea
		\langle U[z_i,\la_i],U[z_j,\la_j]\rangle_{\dot{H}^1(\Rn)} &=& \int_{\Rn}U[z_i,\la_i]^{2^*-1}U[z_j,\la_j]\,{\rm d}x\no\\
		&\approx& \min_{\stackrel{1\leq i,j\leq \nu}{i\neq j}}\left(\frac{\la_i^n}{\la_j^n}, \frac{\la_j^n}{\la_i^n},\frac{1}{\la_i^n\la_j^n|z_i-z_j|^2}\right)^{\tfrac{N-2}{2}}\to 0 \text{ as }n\to \infty.\no
		\eea
		
	\end{theorem}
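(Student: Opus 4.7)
My plan is a concentration--compactness iteration: at each stage I extract a bubble, subtract it, and show the remainder's $\hd$-norm drops by a definite quantum, so the process must terminate after exactly $\nu$ steps.

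First, since $\{u_n\}$ is bounded in $\hd$, along a subsequence $u_n\deb u_{(0)}$. Using $\tilde{\Ga}(u_n)\to 0$ and local Rellich compactness, I pass to the limit in $\int \na u_n\cdot\na\va\,dx=\int |u_n|^{2^*-2}u_n\va\,dx+o(1)$ for every $\va\in C^\infty_c(\Rn)$, so $u_{(0)}\ge 0$ is a finite-energy weak solution of \eqref{ELSI}. By Caffarelli--Gidas--Spruck, either $u_{(0)}\equiv 0$ or $u_{(0)}=U[z_0,\la_0]$. Setting $v_n^{(1)}:=u_n-u_{(0)}$, the Br\'ezis--Lieb lemma and its nonlinear analogue for $|u|^{2^*-2}u$ yield
\be\no
\|v_n^{(1)}\|_{\hd}^2=\|u_n\|_{\hd}^2-\|u_{(0)}\|_{\hd}^2+o(1),\qquad \tilde{\Ga}(v_n^{(1)})\to 0.
\ee

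If $v_n^{(1)}\to 0$ strongly in $\hd$, I stop. Otherwise $\|v_n^{(1)}\|_{L^{2^*}}$ stays bounded below, and the concentration--compactness principle of P.-L.~Lions supplies $\la_n^{(1)}>0$ and $z_n^{(1)}\in\Rn$ such that $w_n(x):=(\la_n^{(1)})^{(N-2)/2}v_n^{(1)}(\la_n^{(1)}x+z_n^{(1)})$ has a nontrivial weak limit $w_{(0)}$; by the same argument as above, $w_{(0)}$ is itself an Aubin--Talenti bubble. Undoing the dilation gives the first bubble to subtract, and I iterate: at stage $k$,
\be\no
v_n^{(k+1)} := u_n-u_{(0)}-\sum_{i=1}^{k}U[z_n^{(i)},\la_n^{(i)}].
\ee
Each extracted bubble carries $\hd$-energy $\|U\|_{\hd}^2=S^{N/2}$, so once asymptotic orthogonality of the bubbles is established, the energy identity $\|v_n^{(k+1)}\|_{\hd}^2=\|u_n\|_{\hd}^2-\|u_{(0)}\|_{\hd}^2-kS^{N/2}+o(1)$ combined with the hypothesized energy window on $\|u_n\|_{\hd}$ forces termination at exactly $k=\nu$.

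The main obstacle, in my view, is the non-interaction claim, which both validates the orthogonal energy bookkeeping above and is a conclusion of the theorem. I would argue by contraposition: if two scaling parameters $(\la_n^{(i)},z_n^{(i)})$ and $(\la_n^{(j)},z_n^{(j)})$ failed to separate in the stated sense, then re-examining the earlier remainder $v_n^{(\min(i,j)-1)}$ in the frame attached to one bubble would still reveal a nontrivial weak limit of size comparable to the other, contradicting the fact that the first was already peeled off. Once separation is secured, the interaction estimate
\be\no
\int_{\Rn}U[z_i,\la_i]^{2^*-1}U[z_j,\la_j]\,dx \approx \min\(\tfrac{\la_i}{\la_j},\tfrac{\la_j}{\la_i},\tfrac{1}{\la_i\la_j|z_i-z_j|^2}\)^{\tfrac{N-2}{2}}
\ee
follows from the conformal invariance of \eqref{ELSI}: normalize the M\"obius group action to reduce to $\int U^{2^*-1}(x)U[0,\mu](x)\,dx$, then read off the stated asymptotics from the three degeneration regimes $\mu\to 0$, $\mu\to\infty$, and $|z_i-z_j|\to\infty$.
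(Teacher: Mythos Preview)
This theorem is quoted in the paper as a known result of Struwe \cite{S, SM} and is \emph{not} given a proof in the paper itself; the paper instead proves its Hardy--Sobolev analogue (Theorem~\ref{PDHSE}) by an argument that is essentially the one you outline. Your concentration--compactness iteration---extract a weak limit, identify it via the Caffarelli--Gidas--Spruck classification, use Br\'ezis--Lieb to split the energy, rescale the remainder to extract the next bubble, and terminate by energy counting---is precisely the standard Struwe scheme and matches step-for-step the structure of the paper's proof of Theorem~\ref{PDHSE} (Steps~1--6 there). The main difference in content is that in the Hardy--Sobolev setting the paper can rule out the $w=0$ alternative directly (Step~6), whereas in the classical Sobolev case you correctly allow translations $z_n^{(i)}$ as well as dilations, since the singular weight is absent. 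Your treatment of the non-interaction claim is a bit sketchier than the rest, but the contraposition idea (if two extracted profiles failed to separate, the later one would already have been visible as part of the earlier weak limit) is the standard mechanism, and the interaction integral estimate is indeed a direct computation after conformal normalization, parallel to Lemma~\ref{BI} and Proposition~\ref{BubInt} in the appendix.
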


	The above theorem says that every non-negative Palais-Smale sequence associated with the energy functional converges to the sum of the weakly interacting family of Aubin-Talenti bubbles (see next sections for precise definitions) which gives qualitative stability result for \eqref{ELSI}. Recently, the focus has been on understanding this phenomenon more accurately and obtaining quantitative bounds on the deficit. There has been rapid progress accumulating in the following results :
	
	\begin{theorem}
		Let $N\geq 3$ and $\nu\geq 1$ be positive integers. There exists a small constant $\de\equiv \de(M,\nu)$ and a large constant $C\equiv (N,\nu)>0$ such that the following statement holds: if $u\in\dot{H}^1(\Rn)$ be such that there exist $\nu$ bubbles $\tilde{U}_1=\tilde{U}[z_1,\la_1],\cdots, \tilde{U}_{\nu}=\tilde{U}[z_{\nu},\la_{\nu}]$ $\de$-interacting Aubin-Talenti bubbles such that 
		\be\no
		\Bigg{\|}\nabla \left(u-\sum_{i=1}^{\nu}\tilde{U_{i}}\right)\Bigg{\|}_{L^2(\Rn)} \leq \de
		\ee 
		then there are $\nu$ Aubin-Talenti bubbles  $U_1=U[z_1,\la_1],\cdots, U_{\nu}=\tilde{U}[z_{\nu},\la_{\nu}]$ such that the following holds:
		\begin{itemize}
			\item \textup{(Ciraolo-Figalli-Maggi \cite{CFM})} If $\nu=1$, the,
			\be\no
			\Big{\|}\nabla \left(u-U_1\right)\Big{\|}_{L^2(\Rn)}\lesssim_{N} \Ga(u).
			\ee
			\item \textup{(Figalli-Glaudo \cite{FG})} If $\nu>1$ and $3\leq N \leq 5$, then,
			\be\label{CFMSE}
			\Big{\|}\nabla \left(u-\sum_{i=1}^{\nu}U_i\right)\Big{\|}_{L^2(\Rn)}\lesssim_{N,\nu} \Ga(u).
			\ee
			Furthermore, the interaction between the Aubin-Talenti bubbles can be estimated as
			\be\label{FGSE}
			\langle U[z_i,\la_i],U[z_j,\la_j]\rangle_{\dot{H}^1(\Rn)} = \int_{\Rn}U[z_i,\la_i]^{2^*-1}U[z_j,\la_j]\,{\rm d}x\lesssim_{N} \Ga(u).
			\ee
			Furthermore, the dimensional restriction is optimal in the sense that for $N\geq 6$ and $\nu \geq 2$, there exists a sequence $\{u_R\}\subset \dot{H}^1(\Rn)$ such that \eqref{FGSE} fails to hold for any $C \equiv C(N)$ as $R \to \infty$.
			\item \textup{(Deng-Sun-Wei \cite{DSW})} If $\nu> 1$ and $N\geq 6$, then,
			\be\label{DSWSE}
			\Big{\|}\nabla \left(u-\sum_{i=1}^{\nu}U_i\right)\Big{\|}_{L^2(\Rn)}\lesssim_{N,\nu}
			\begin{cases} \Ga(u)\log |\Ga(u)|^{\tfrac{1}{2}}\qquad\text{if }N=6\\
				\Ga(u)^{\tfrac{N+2}{2(N-2)}}\qquad\quad\,\text{ if }N\geq 7.
			\end{cases}
			\ee
			
		\end{itemize}
	Furthermore, all the above estimates are sharp.
	\end{theorem}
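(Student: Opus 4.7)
The plan is to prove each case by a Lyapunov-Schmidt reduction combined with coercivity of the linearized operator in the orthogonal complement of the critical manifold. I start with the single-bubble case $\nu=1$. Given $u$ satisfying the $\de$-closeness hypothesis, I would use the implicit function theorem to produce an optimal choice of bubble parameters $(z_1,\la_1)$ for which the remainder $\rho := u - U_1$ satisfies the orthogonality conditions $\langle \rho, \partial_{z_1^\ell} U_1\rangle_{\hd} = \langle \rho,\partial_{\la_1} U_1\rangle_{\hd} = 0$ for all $\ell=1,\ldots,N$. Writing $-\De u = u^{2^*-1} + \mathcal{E}$ with $\|\mathcal{E}\|_{\dot H^{-1}(\Rn)} = \Ga(u)$ and expanding the nonlinearity yields
\[ L_1 \rho := -\De\rho - (2^*-1)U_1^{2^*-2}\rho = \mathcal{E} + Q(\rho), \]
where $Q(\rho)$ is superlinear, of order $\|\rho\|_{\hd}^{\min(2, 2^*-1)}$. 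The Rey and Bianchi-Egnell spectral analysis gives coercivity of $L_1$ on the orthogonal complement of its kernel: $\langle L_1\rho,\rho\rangle \geq c\|\rho\|_{\hd}^2$. Pairing with $\rho$ and absorbing the small nonlinear term into the left-hand side gives $\|\rho\|_{\hd} \lesssim \Ga(u)$, which is the Ciraolo-Figalli-Maggi bound.

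For $\nu\geq 2$, the same scheme applies with $\sum_{i=1}^{\nu} U_i$ replacing $U_1$, but a new source of error appears: the interaction defect $I := \bigl(\sum_{i} U_i\bigr)^{2^*-1} - \sum_{i} U_i^{2^*-1}$, which measures the failure of the bubble sum to solve the Euler-Lagrange equation. Carrying the linearization through produces
\[ \|\rho\|_{\hd} \lesssim \Ga(u) + \|I\|_{\dot H^{-1}(\Rn)} + \|\rho\|_{\hd}^{\min(2,2^*-1)}, \]
so the task reduces to controlling $\|I\|_{\dot H^{-1}(\Rn)}$ by an appropriate power of $\Ga(u)$.

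This is where the dimensional dichotomy arises. For $3\leq N\leq 5$ one has $2^*-1>2$, and a direct Taylor expansion of $I$ gives $I \lesssim \sum_{i\ne j}U_i^{2^*-2}U_j$, whose $\dot H^{-1}$ norm is linearly controlled by the pairwise interaction $\langle U_i,U_j\rangle_{\hd}$; this interaction is in turn comparable to $\Ga(u)$ by testing $\mathcal{E}$ against $\partial_{\la_i}U_i$ and exploiting the near-orthogonality of the bubbles (the key Figalli-Glaudo projection computation). When $N=6$, the equality $2^*-1=2$ produces a logarithmic correction from the far-field integral. When $N\geq 7$, the nonlinearity is subquadratic, the naive Taylor expansion loses too many powers, and Deng-Sun-Wei bypass this by constructing a second-order correction to the bubble ansatz via a projected linear PDE and then rebalancing the remainders, which ultimately yields the degraded exponent $\tfrac{N+2}{2(N-2)}$.

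The hardest step will be this high-dimensional interaction estimate: subquadratic growth obstructs any linear bound, so one must produce an explicit second-order ansatz, solve a projected PDE on each bubble to cancel the leading interaction, and carefully track the remainders in weighted norms. Sharpness in each case is verified by constructing explicit well-separated configurations $u_R = \sum_i U[z_i R, 1]$ as $R\to\infty$, computing $\Ga(u_R)$ and the distance to the set of bubble sums asymptotically, and checking that the predicted exponents are attained.
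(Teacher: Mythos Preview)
This theorem is not proved in the paper. It appears in the introduction (Section~1.3, ``The Quantitative Stability problem and known results'') as a survey statement collecting the results of Ciraolo--Figalli--Maggi~\cite{CFM}, Figalli--Glaudo~\cite{FG}, and Deng--Sun--Wei~\cite{DSW}; the paper simply cites these references and moves on to its own contributions, which concern the Hardy--Sobolev inequality rather than the classical Sobolev inequality. There is therefore no ``paper's own proof'' to compare your proposal against.

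That said, your sketch is a reasonable high-level outline of the strategies in the cited works. The $\nu=1$ case via optimal-bubble orthogonality and coercivity of the linearized operator is indeed the Ciraolo--Figalli--Maggi argument (and is also the template the present paper adapts in Section~\ref{SCFMHS} for the Hardy--Sobolev setting). For $\nu\ge 2$ and $3\le N\le 5$, your identification of the interaction defect $I$ and the testing against $\partial_{\la_i}U_i$ to recover the linear bound on the pairwise interactions is the Figalli--Glaudo mechanism. Your account of the $N\ge 6$ case is accurate in spirit but very compressed: Deng--Sun--Wei's argument is substantially more delicate than ``constructing a second-order correction,'' involving a careful pointwise analysis in annular regions between bubble cores and a refined a~priori estimate for the projected linear problem in weighted $L^\infty$-type norms, not merely an $\dot H^{-1}$ bound. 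If you were to write this out in full you would need to supply those weighted estimates explicitly; as written, that step is a placeholder rather than a proof.
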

	
	\subsection{Quantitative Stability of Hardy-Sobolev}
	In this article, our focus is to study the quantitative stability of the Hardy-Sobolev inequality \eqref{HSI} and the stability associated to its Euler-Lagrange equation on $\Rn$. First, we prove the stability of \eqref{HSI} around the minimizers (which is precisely the manifold $\M$ defined in \eqref{MOM}) in the spirit of Bianchi-Egnell \cite{BE} (also see \cite{RSW}, for similar results with $\ga=0,\,s\in (0,2)$). Then we prove stability around a critical point in the spirit of Ciraolo-Figalli-Maggi \cite{CFM}. First, we prove a qualitative stability result, namely Theorem \ref{PDHSE}.
	
	Here,
	\be\label{HSBU}
	U_{\ga,s}(x) := \frac{C_{N,\ga,s}}{\left(|x|^{\ba_{-}(\ga)\tfrac{2-s}{N-2}} + |x|^{\ba_{+}(\ga)\tfrac{2-s}{N-2}}\right)^{\tfrac{N-2}{2-s}}}
	\ee
	and $U_{\ga,s}^{\la}(x):= \la^{-\tfrac{N-2}{2}}U_{\ga,s}(\tfrac{x}{\la})$ for $\la>0$ and $\ba_{\pm}(\ga):= \frac{N-2}{2}\pm \var$ with $\var=\sqrt{\frac{(N-2)^2}{4}-\ga}$.
	We obtain the non-quantitative stability result for the Hardy-Sobolev inequality as we establish the Struwe-type profile decomposition theorem for the Palais-Smale sequences (Theorem \ref{PDHSE}). 

    \begin{remark}\label{eqnorm}
{\rm	For $0<\ga<\ga_{H},$
	\be
\|u\|_{\ga}:=\left(\int_{\Rn}|\nabla u|^2\,{\rm d}x-\ga\int_{\Rn}\frac{|u|^2}{|x|^{2}}\;{\rm d}x\right)^{\tfrac{1}{2}}\no
	\ee
	defines a norm in $\hd$ which is equivalent to the standard norm in $\hd$. In particular,
	$$\sqrt{1-\frac{\ga}{\ga_{H}}}\|u\|_{\hd}\leq \|u\|_{\ga}\leq\|u\|_{\hd}.$$
The corresponding equivalent inner product $\langle \cdot,\cdot\rangle_{\ga}$ in the homogeneous Hilbert space $\hd$ is given by
	\be
	\langle u,v\rangle_{\ga}:=\int_{\Rn}\nabla u\cdot\nabla v\,{\rm d}x-\ga\int_{\Rn}\frac{uv}{|x|^{2}}\;{\rm d}x.\no
	\ee
Finally, for simplicity, we endow in what follows the weighted Lebesgue space $L^{2^*(s)}(\Rn, |x|^{-s})$
with the norm $\|u\|_{L^{2^*(s)}(\Rn, |x|^{-s})}=\left(\int_{\Rn}\frac{|u|^{2^*(s)}}{|x|^s}\;{\rm d}x\right)^{\tfrac{1}{2^*(s)}}$.}
\end{remark}
	
	\subsection{Main results}
    
	\begin{theorem}\label{BEHSE}
		There exists a positive constant $\al\equiv \al(N,\ga,s)$ such that
		\be\no
		\|u\|_{\ga}^2-\mu_{\ga,s}(\Rn)\|u\|_{L^{2^*(s)}(\Rn,|x|^{-s})}^2 \geq \al
		\, \textup{dist}_{\dot{H}^1(\Rn)}(u,\mathcal{M})^2,\,\text{for all }u\in\dot{H}^1(\Rn).
		\ee
		The exponent is sharp in the sense stated in Theorem \ref{BEMT}.
	\end{theorem}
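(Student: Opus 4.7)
The plan is to follow the classical Bianchi-Egnell strategy, adapting it to the Hardy-Sobolev setting where the bubbles $U_{\gamma,s}^\lambda$ and the two-dimensional manifold $\mathcal{M}$ replace the Aubin-Talenti bubbles and the $(N+2)$-dimensional manifold $\tilde{\mathcal{M}}$. The proof splits into a global compactness step that pins the problem onto a fixed bubble, a spectral-gap analysis of the linearization there, and a final Taylor-expansion/contradiction argument.

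First, I would argue by contradiction: suppose there is a sequence $\{u_n\}\subset\dot{H}^1(\R^N)$ with $\|u_n\|_\gamma^2-\mu_{\gamma,s}\|u_n\|_{L^{2^*(s)}(|x|^{-s})}^2 = o\bigl(\dist(u_n,\M)^2\bigr)$. Using the invariances of both sides (scalar multiplication and the dilation $u \mapsto \lambda^{-(N-2)/2}u(\cdot/\lambda)$, which are exactly the two-parameter group generating $\mathcal{M}$), I normalize $\|u_n\|_{L^{2^*(s)}(|x|^{-s})}=1$ so that $\|u_n\|_\gamma^2\to\mu_{\gamma,s}$; hence $\{u_n\}$ is a minimizing sequence for \eqref{BCHSI} and in particular a Palais--Smale sequence at the one-bubble energy level. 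By the Struwe-type profile decomposition stated above (Theorem \ref{PDHSE}), after passing to a subsequence and applying the group action, there exist $c\in\R\setminus\{0\}$ and $\lambda_n>0$ with $\|u_n-cU_{\gamma,s}^{\lambda_n}\|_\gamma\to 0$; using the invariances again I normalize so that the closest point of $\M$ to $u_n$ is a fixed bubble $U:=U_{\gamma,s}$ and write $u_n = U+v_n$ with $v_n\to 0$ in $\hd$ and $v_n\perp_\gamma T_U\M = \Span\{U,\,\partial_\lambda U_{\gamma,s}^\lambda|_{\lambda=1}\}$.

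Second, I would Taylor-expand the deficit at $U$: using the Euler-Lagrange identity $\|U\|_\gamma^2=\mu_{\gamma,s}\|U\|_{L^{2^*(s)}(|x|^{-s})}^2$ and $\langle U,v_n\rangle_\gamma=\int |x|^{-s}U^{2^*(s)-1}v_n$, the linear-in-$v_n$ terms cancel and one obtains
\be\no
\|u_n\|_\gamma^2-\mu_{\gamma,s}\|u_n\|_{L^{2^*(s)}(|x|^{-s})}^2 = \|v_n\|_\gamma^2 - (2^*(s)-1)\int_{\Rn}\frac{U^{2^*(s)-2}v_n^2}{|x|^s}\,{\rm d}x + o(\|v_n\|_\gamma^2).
\ee
Thus the theorem reduces to establishing the coercivity estimate
\be\no
Q(v):=\|v\|_\gamma^2 - (2^*(s)-1)\int_{\Rn}\frac{U^{2^*(s)-2}v^2}{|x|^s}\,{\rm d}x \geq \alpha\,\|v\|_\gamma^2
\ee
for every $v\in\hd$ with $v\perp_\gamma \Span\{U,\partial_\lambda U_{\gamma,s}^\lambda|_{\lambda=1}\}$, for some $\alpha\in(0,1)$.

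Third, and this is the heart of the argument, I would carry out a spectral analysis of the compact self-adjoint operator $T:\hd\to\hd$ defined by $\langle Tv,w\rangle_\gamma=(2^*(s)-1)\int |x|^{-s}U^{2^*(s)-2}vw$. After the Emden--Fowler change of variable $x=e^{-t}\theta$ (for $t\in\R$, $\theta\in\mathbb{S}^{N-1}$), the Hardy term becomes constant coefficient and $U$ becomes a function of $t$ alone, so the eigenvalue problem for $T$ decouples through spherical harmonics on $\mathbb{S}^{N-1}$ into a family of one-dimensional Schrödinger-type problems on the line. The Jacobi-field directions $U$ (scalar) and $\partial_\lambda U_{\gamma,s}^\lambda|_{\lambda=1}$ (dilation) are, by differentiating the Euler--Lagrange equation \eqref{MainEq} and the one-parameter scaling, eigenfunctions of $T$ with eigenvalue $1$, and they both lie in the $\ell=0$ spherical-harmonic mode. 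I would then show that $1$ is a simple-or-double eigenvalue accounted for exactly by these tangent directions, the remaining eigenvalues in the $\ell=0$ sector are strictly less than $1$, and the eigenvalues of all $\ell\geq 1$ sectors are also strictly less than $1$. Combined with the compactness of $T$, this yields a spectral gap $\alpha>0$ and hence the coercivity of $Q$ on the required orthogonal complement; plugging back contradicts the standing assumption, and the sharpness of the exponent $2$ follows by testing with $u_\epsilon=U+\epsilon\phi$ where $\phi$ realizes the spectral gap.

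The main obstacle is the spectral step. Unlike the pure Sobolev case, $\M$ is only two-dimensional because the Hardy potential breaks translation invariance, so one must verify that no translation-type eigenfunction survives to produce a spurious eigenvalue $1$ in the $\ell=1$ sector, and that the next eigenvalue after the two tangent directions in the $\ell=0$ sector is strictly below $1$. Carrying this out rests on explicit ODE analysis on the cylinder (identifying the potential $(2^*(s)-1)U^{2^*(s)-2}e^{-(2-s)t}$ in the transformed variables and exploiting the monotonicity of eigenvalues in $\ell$), together with careful use of the bounds $0<\gamma<\gamma_H$ and $s\in(0,2)$ to rule out accumulations at $1$; this is where the adaptation of Bianchi--Egnell genuinely departs from the $\gamma=0$, $s=0$ setting.
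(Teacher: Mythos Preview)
Your approach is essentially the paper's: both follow Bianchi--Egnell by combining a compactness step (reducing to a neighbourhood of a fixed bubble) with a second-order Taylor expansion of the deficit and a spectral-gap estimate for the linearization, the latter being exactly what the paper carries out in its Appendix (Lemma~\ref{STLgar} together with Robert's nondegeneracy theorem~\cite{FR}) via the same separation-of-variables / spherical-harmonics decomposition you describe. The paper merely organizes the argument differently---a self-contained local Lemma~\ref{L2.1} followed by a short global contradiction with the normalization $\|u_n\|_\gamma=1$ rather than $\|u_n\|_{L^{2^*(s)}(|x|^{-s})}=1$---but the substance is identical.

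Two minor slips worth fixing. First, a minimizing sequence for \eqref{BCHSI} is not automatically a Palais--Smale sequence for $I_{\gamma,s}$; the paper sidesteps this by appealing directly to Lions/Struwe concentration-compactness (any normalized minimizing sequence converges, after dilation, to $\pm U_{\gamma,s}$), so you should either do the same or insert an Ekeland-type argument. Second, with your definition of $T$ one computes $TU=(2^*(s)-1)U$, not $TU=U$; only $\partial_\lambda U_{\gamma,s}^\lambda|_{\lambda=1}$ has eigenvalue $1$. This does not damage the strategy, since the coercivity of $Q$ on $\Span\{U,\partial_\lambda U\}^{\perp_\gamma}$ is equivalent to the third eigenvalue $\eta_3$ of $\mathcal{L}_{\gamma,s}$ strictly exceeding $\eta_2=2^*(s)-1$, which is precisely what the paper establishes and uses (yielding $\alpha=1-\eta_2/\eta_3$).
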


    In the spirit of Ciralo-Figalli-Maggi \cite{CFM} we prove the following :
    \begin{theorem}\label{CFMHS}
        Let $u\in\hd$ be a nonnegative function satisfying
        \be\no
        \frac{1}{2}\mu_{\ga,s}^{\tfrac{N-s}{2-s}}\leq \|u\|_{\ga}\leq \frac{3}{2}\mu_{\ga,s}^{\tfrac{N-s}{2-s}}
        \ee
        Then there exists $\la>0$ and $\rho\in\hd$ such that
        \be\no
        u=U_{\ga,s}^{\la}+\rho,
        \ee
        with $\|\rho\|_{\ga}\lesssim \Ga(u)$.
    \end{theorem}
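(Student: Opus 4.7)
The plan is to follow the Lyapunov--Schmidt strategy of Ciraolo--Figalli--Maggi \cite{CFM}, substituting the Hardy--Sobolev ingredients for the Euclidean ones and using the two preceding results of the paper (Theorems~\ref{PDHSE} and~\ref{BEHSE}) as the main inputs. First I reduce to the perturbative regime. If $\Ga(u)\geq \eps_{0}$ for a small $\eps_{0}>0$ to be fixed, then the hypothesis $\|u\|_{\ga}\leq \tfrac{3}{2}\mu_{\ga,s}^{(N-s)/(2-s)}$ makes the conclusion trivial by choosing, say, $\la=1$. When $\Ga(u)\ll 1$, a standard contradiction argument using Theorem~\ref{PDHSE} shows that the energy quantization forces exactly one bubble to appear: zero bubbles would contradict the lower energy bound, while two or more would exceed the upper bound. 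Hence $u=U_{\ga,s}^{\la_{*}}+o(1)$ in $\hd$ qualitatively, for some $\la_{*}>0$.

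Second, I perform the standard orthogonal decomposition. Let $(c,\la)\in\R\times(0,\infty)$ minimize the map $(c,\la)\mapsto \|u-cU_{\ga,s}^{\la}\|_{\ga}^{2}$; the minimum is attained thanks to the qualitative closeness, and the first-order conditions imply that $\rho:=u-cU_{\ga,s}^{\la}$ satisfies
\begin{equation*}
\langle\rho,U_{\ga,s}^{\la}\rangle_{\ga}=0,\qquad \langle\rho,\pa_{\la}U_{\ga,s}^{\la}\rangle_{\ga}=0.
\end{equation*}
Set $p=2^{*}(s)$. Subtracting the Euler--Lagrange equation for $U_{\ga,s}^{\la}$ from the perturbed equation for $u$ and Taylor expanding $(cU_{\ga,s}^{\la}+\rho)^{p-1}$ yields the reduced equation
\begin{equation*}
L_{\la}\rho = h-(c^{p-1}-c)\frac{(U_{\ga,s}^{\la})^{p-1}}{|x|^{s}}+N(\rho),
\end{equation*}
where $L_{\la}:=-\De-\ga|x|^{-2}-(p-1)|x|^{-s}(cU_{\ga,s}^{\la})^{p-2}$ is the linearization, $h$ denotes the PDE residual of $u$ in the dual of $(\hd,\|\cdot\|_{\ga})$ (with dual norm equal to $\Ga(u)$), and $N(\rho)$ is the quadratic remainder of the Taylor expansion.

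Third, I exploit the coercivity of $L_{\la}$. Since the Hardy--Sobolev problem is invariant only under dilations (the Hardy potential and the weight $|x|^{-s}$ kill translation invariance), the kernel of $L_{\la}$ on $\hd$ is the one-dimensional span of $\pa_{\la}U_{\ga,s}^{\la}$ and the unique negative direction lies along $U_{\ga,s}^{\la}$; this is consistent with the computation $\langle L_{1}U_{\ga,s},U_{\ga,s}\rangle_{L^{2}}=-(p-2)\int|x|^{-s}U_{\ga,s}^{p}\,{\rm d}x<0$. The spectral gap underlying Theorem~\ref{BEHSE} then yields a constant $\al\equiv\al(N,\ga,s)>0$ such that
\begin{equation*}
\langle L_{\la}\phi,\phi\rangle_{L^{2}}\geq \al\|\phi\|_{\ga}^{2}\qquad\text{for every }\phi\in\hd\text{ with }\phi\perp_{\ga}\Span\{U_{\ga,s}^{\la},\pa_{\la}U_{\ga,s}^{\la}\}.
\end{equation*}
Testing the reduced equation against $\rho$, the middle source term disappears since $\langle |x|^{-s}(U_{\ga,s}^{\la})^{p-1},\rho\rangle_{L^{2}}=\langle U_{\ga,s}^{\la},\rho\rangle_{\ga}=0$, and the nonlinear remainder is controlled by $|\langle N(\rho),\rho\rangle|\lesssim \|\rho\|_{\ga}^{2+\de}$ for some $\de=\de(N,s)>0$ via H\"older and the Hardy--Sobolev inequality (using $p>2$). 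Absorbing the nonlinear term thanks to the qualitative smallness of $\|\rho\|_{\ga}$ gives $\|\rho\|_{\ga}\lesssim \Ga(u)$. To eliminate the scalar parameter $c$, I test the perturbed PDE against $U_{\ga,s}^{\la}$: using $\langle\rho,U_{\ga,s}^{\la}\rangle_{\ga}=0$ together with the Euler--Lagrange equation for $U_{\ga,s}^{\la}$ produces a scalar identity of the form $(c-c^{p-1})\|U_{\ga,s}^{\la}\|_{\ga}^{2}=O(\Ga(u)+\|\rho\|_{\ga}^{2})$, which is nondegenerate in $c-1$ because $p>2$; hence $|c-1|\lesssim \Ga(u)$. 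Setting $\tilde\rho:=u-U_{\ga,s}^{\la}=\rho+(c-1)U_{\ga,s}^{\la}$ then yields $\|\tilde\rho\|_{\ga}\lesssim \Ga(u)$, which is the claim.

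The most delicate ingredient is the uniform coercivity estimate for $L_{\la}$ on $\Span\{U_{\ga,s}^{\la},\pa_{\la}U_{\ga,s}^{\la}\}^{\perp}$. It demands a complete spectral picture of the linearization of the Hardy--Sobolev equation at $U_{\ga,s}$: the identification of the kernel as the single dilation direction, the verification that the only negative eigendirection is $U_{\ga,s}$, and a positive gap above zero. The singular Hardy weight forces this analysis to take place in weighted function spaces and to exploit the sharp Hardy inequality of Remark~\ref{eqnorm}. This is precisely the spectral content required to prove Theorem~\ref{BEHSE}, so it will already be available earlier in the paper; once it is in hand, the remaining argument above is a routine Banach-space contraction estimate.
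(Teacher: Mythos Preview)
Your proposal is correct and follows essentially the same route as the paper: reduce to the perturbative regime via the profile decomposition (Theorem~\ref{SDHS}), project onto the two-parameter manifold $\{cU_{\ga,s}^{\la}\}$ to obtain the orthogonality conditions, test the equation against $\rho$ and invoke the spectral gap of $\mathcal{L}_{\ga,\la}$ (Lemma~\ref{STLgar}) to get $\|\rho\|_{\ga}\lesssim\Ga(u)$, and finally control $|c-1|$ by a separate scalar identity. The only cosmetic differences are that the paper uses the pointwise inequality \eqref{EE} in place of your Taylor-remainder language and tests against $u$ (rather than $U_{\ga,s}^{\la}$) in Step~II to bound $|\al-1|$; these are equivalent.
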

	In a forthcoming article, we deal with the case of multibubbles. Also, we remark that sharp stability estimates for CKN inequality and its associated Euler-Lagrange equation have been done recently by Wei-Wu \cite{WW}.

    \subsection{Structure of the paper} 
    In section \ref{s2}, we prove the Bianchi-Egnell stability result \cite{BE} around a positive minimizer corresponding to the Hardy-Sobolev inequality, namely Theorem \ref{BEHSE}. In section \ref{SPDHS}, we prove the profile decomposition result for the Palais-Smale sequences associated with the energy functional to the Hardy-Sobolev equation. In section \ref{SCFMHS}, we prove the quantitative stability result for the Hardy-Sobolev equation when there is one bubble under consideration, namely Theorem \ref{CFMHS}. In section \ref{SAPP}, we show the spectral properties of the corresponding linearized operator around a bubble which plays a key role in proving Theorem \ref{BEHSE} and Theorem \ref{CFMHS} and also computed integral estimation for interaction between bubbles. 

    \medskip

    \medskip
    
	\section{Bianchi-Egnell's result for the H-S inequality}\label{s2}
		The following lemma plays a pivotal role in the proof of Theorem \ref{BEHSE}. This lemma gives a precise picture of how the functions in $\hd$ which are near to $\M$ (defined in \eqref{MOM}) behave. Here $U_{\ga,s}$ is a minimizer of \eqref{BCHSI} and $C_{N,\ga,s}$ is chosen so that $\|U_{\ga,s}\|_{\ga}=1$.

	\begin{lemma}\label{L2.1}
		There is a constant $\al=\al(N,\ga,s)>0$ such that
		\be\label{BEHS1}
		\|\phi\|_{\ga}^2-\mu_{\ga,s}(\Rn)\|\phi\|^2_{L^{2^*(s)}(\Rn,|x|^{-s})}\geq \al \dist (\phi,\mathcal{M})^2 +o\left(\dist (\phi,\mathcal{M})^2\right),
		\ee
		for all $\phi \in\dot{H}^1(\Rn)$ with $\dist (\phi,\mathcal{M})<\|\phi\|_{\ga}$.
	\end{lemma}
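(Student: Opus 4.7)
The plan is to mimic Bianchi--Egnell's local expansion around the manifold of minimizers: decompose $\phi$ as the sum of a nearest bubble and an orthogonal remainder, Taylor-expand the deficit to second order, and close the estimate via a spectral gap for the linearized operator on the orthogonal complement of the tangent space at the nearest bubble. Under the hypothesis $\dist(\phi,\M)<\|\phi\|_\ga$, the function $\phi$ is nonzero and its nearest-point projection on $\M$ is attained at some $cU_{\ga,s}^\la\in\M$. Writing $\phi = cU_{\ga,s}^\la + \rho$, the minimization of $\|\phi-\psi\|_\ga$ over $\psi\in\M$ forces $\rho$ to be $\ga$-orthogonal to the two-dimensional tangent space
\[
T_{cU_{\ga,s}^\la}\M \;=\; \Span_{\R}\bigl\{\,U_{\ga,s}^\la,\ \partial_\la U_{\ga,s}^\la\,\bigr\},
\]
with $\|\rho\|_\ga = \dist(\phi,\M)$. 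Since both sides of \eqref{BEHS1} are invariant under the dilations $u\mapsto \mu^{-(N-2)/2}u(\cdot/\mu)$ and are $2$-homogeneous in $\phi$, I would use both symmetries to reduce to $c=\la=1$; set $U:=U_{\ga,s}^1$.

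Next, I would Taylor-expand the deficit at $\phi=U+\rho$. From the pointwise expansion
\[
|U+\rho|^{2^*(s)} = U^{2^*(s)} + 2^*(s)\,U^{2^*(s)-1}\rho + \tfrac{2^*(s)(2^*(s)-1)}{2}\,U^{2^*(s)-2}\rho^2 + o(\rho^2),
\]
integrated against $|x|^{-s}\,dx$ and then passed through $(\,\cdot\,)^{2/2^*(s)}$, combined with the Euler--Lagrange identity $\int U^{2^*(s)-1}\rho/|x|^s\,dx = \langle U,\rho\rangle_\ga$ and $\mu_{\ga,s}\|U\|_{L^{2^*(s)}(\Rn,|x|^{-s})}^2 = \|U\|_\ga^2$, the orthogonality $\langle U,\rho\rangle_\ga=0$ kills both the first-order term and a residual quadratic correction. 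The outcome is
\[
\|\phi\|_\ga^2 - \mu_{\ga,s}\|\phi\|_{L^{2^*(s)}(\Rn,|x|^{-s})}^2 \;=\; \|\rho\|_\ga^2 - (2^*(s)-1)\int_{\Rn}\frac{U^{2^*(s)-2}\rho^2}{|x|^s}\,dx + o(\|\rho\|_\ga^2).
\]

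The quadratic form on the right equals $Q(\rho)=\langle L_U\rho,\rho\rangle$ for the linearized operator $L_U\rho := -\De\rho - \ga\rho/|x|^2 - (2^*(s)-1)\,U^{2^*(s)-2}\rho/|x|^s$. To conclude I would invoke the spectral analysis of $L_U$ carried out in Section~\ref{SAPP}: $O(N)$-invariance of $U$ decomposes the eigenvalue problem into spherical harmonics and reduces it to a family of radial Sturm--Liouville problems in $r=|x|$, yielding the structural picture that $Q$ has exactly one negative direction (parallel to $U$ itself, excluded by $\langle U,\rho\rangle_\ga=0$) and a kernel spanned by $\partial_\la U^\la|_{\la=1}$ (excluded by the other orthogonality), with a uniform gap $\al=\al(N,\ga,s)>0$ above: $Q(\rho)\geq \al\|\rho\|_\ga^2$ whenever $\rho\perp_\ga T_U\M$. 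Substituting this bound into the expansion above and using $\|\rho\|_\ga = \dist(\phi,\M)$ gives \eqref{BEHS1}. The main obstacle is precisely this spectral gap: unlike the Sobolev case, translation invariance is destroyed by the Hardy weights $|x|^{-2}$ and $|x|^{-s}$, so the tangent space shrinks from $(N+2)$ to $2$ dimensions and one must directly verify that the non-radial spherical-harmonic modes do not introduce any spurious zero or negative modes beyond the two accounted for above.
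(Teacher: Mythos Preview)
Your approach is essentially the paper's: decompose $\phi$ as nearest bubble plus $\ga$-orthogonal remainder, Taylor-expand the deficit to second order, and invoke the spectral gap $\eta_3>\eta_2=2^*(s)-1$ from Lemma~\ref{STLgar} on the orthogonal complement of $T_U\M$, yielding $\al=1-\eta_2/\eta_3$. The one step you assert but the paper actually proves is that the hypothesis $\dist(\phi,\M)<\|\phi\|_\ga$ forces a minimizing sequence $(c_k,\la_k)$ to stay in a compact subset of $(\R\setminus\{0\})\times\R_+$ so the nearest bubble exists; your reduction to $c=\la=1$ via $2$-homogeneity and dilation invariance is otherwise a clean simplification over the paper, which carries $c_0$ through the expansion.
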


	\begin{proof}
		$\mathcal{M}$ is a two-dimensional manifold embedded inside $\dot{H}^1(\Rn)$:
		\bea
		\mathcal{M} &\hookrightarrow& \dot{H}^1(\Rn)\no\\
		\R\setminus \{0\}\times\R_{+}\times \M\ni (c,\la,U_{\ga,s})&\mapsto& cU_{\ga,s}^{\la}\in \dot{H}^{1}(\Rn).\no
		\eea
		Take $\phi\in\dot{H}^1(\Rn)$ with 
		\bea
		\dist(\phi,\mathcal{M})^2 &=&\inf_{(c,\la)\in \R\setminus\{0\}\times \R_+} \|\phi-cU_{\ga,s}^{\la}\|_{\ga}^2\no\\
		&=&\inf_{(c,\la)\in \R\setminus\{0\}\times \R_+}\left[\|\phi\|_{\ga}^2 + |c|^2\|U_{\ga,s}^{\la}\|_{\ga}^2-2\langle \phi,cU_{\ga,s}^{\la}\rangle_{\ga}\right] < \|\phi\|_{\ga}^2\label{DFA}\\
		&&\qquad\qquad\qquad\text{(by hypothesis of lemma)}\no
		\eea
		Let $\{(c_k, \la_k)\}_{k\in\mathbb{N}}$ be a minimizing sequence. Define,
		$$v_k:= \|\phi\|_{\ga}^2+|c_k|^2\|U_{\ga,s}^{\la_k}\|_{\ga}^2-2\langle \phi, c_kU_{\ga,s}^{\la_k}\rangle_{\ga}\to \dist(\phi,\M)^2\,\,\text{ as }k\to\infty.$$
		
		Again from \eqref{DFA}, it follows that
		\be\label{DFA1}
		\limsup_{k\to\infty}v_k <\|\phi\|_{\ga}^2
		\ee
		$v_k$ being a quadratic polynomial in $c_k$, hence $c_k$ is bounded and thus converges (up to a subsequence, still denoting by $c_k$) to some $c_0$ as $k\to\infty$. Again $c_0>0$, follows from \eqref{DFA1}.
		Now we have,
		\bea
		\langle \phi,U_{\ga,s}^{\la_k}\rangle_{\ga}&=&\int_{\Rn} \nabla \phi (x)\cdot \nabla U_{\ga,s}^{\la_k}(x)\,{\rm d}x - \ga\int_{\Rn}\frac{\phi(x)U_{\ga,s}^{\la_k}(x)}{|x|^2}\,{\rm d}x\no\\
		&=&\int_{\Rn} \la_k^{\tfrac{N}{2}}\nabla \phi (x)\cdot \nabla U_{\ga,s}(\la_k x)\,{\rm d}x - \ga\int_{\Rn} \frac{\la_k^{\tfrac{N-2}{2}}\phi(x)U_{\ga,s}(\la_k x)}{|x|^2}\,{\rm d}x\no\\
		&=&\int_{\Rn} \la_k^{-\tfrac{N}{2}}\nabla \phi (\tfrac{x}{\la_k})\cdot \nabla U_{\ga,s}(x)\,{\rm d}x - \ga\int_{\Rn} \frac{\la_k^{-\left(\tfrac{N-2}{2}\right)}\phi(\tfrac{x}{\la_k})U_{\ga,s}(x)}{|x|^2}\,{\rm d}x.\no
		\eea
		Again from the scale invariance of $\|\cdot\|_{\ga}$ under the conformal transformation we have for all $k$, $\|U_{\ga,s}^{\la_k}\|_{\ga}=\|U_{\ga,s}\|_{\ga}$. Thus if $\la_k\to\infty$ as $k\to\infty,$ then $U_{\ga,s}^{\la_k}\rightharpoonup 0$ in $\hd$ as $k\to\infty$. Then $v_{k}\to \|\phi\|_{\ga}^2+c_0^2\|U_{\ga,s}\|_{\ga}^2$ which contradicts \eqref{DFA1}. Hence, $\{\la_k\}_{k\in\mathbb{N}}$ must be bounded and thus $\la_k\to \la_0\in \R_{+}\cup\{0\}$ (up to a subsequence). Now, $\la_0=0$ contradicts \eqref{DFA1}. Thus, $\la_0>0.$ 
		
		Since $\mathcal{M}$ is a smooth manifold, we must get $(\phi-c_0U_{\ga,s}^{\la_0})\perp T_{c_0U_{\ga,s}^{\la_0}}\mathcal{M}$. Furthermore, $T_{c_0U_{\ga,s}^{\la_0}}\mathcal{M} = \operatorname{span} \left\{U_{\ga,s}^{\la_0},\,\frac{{\rm d}}{{\rm d}\la}\bigg{|}_{\la=\la_0}U_{\ga,s}^{\la}\right\}.$ Now consider the operator, 
		\be\label{Lgar}
		\mathcal{L}_{\ga,\la_0} := \frac{\left(-\De-\frac{\ga}{|x|^2}\right)}{\frac{(U_{\ga,s}^{\la_0})^{2^*(s)-2}}{|x|^s}}\text{ on }L^2\left(\frac{(U_{\ga,s}^{\la_0})^{2^*(s)-2}}{|x|^s},\Rn\right).
		\ee
		 Since the weight function, $\frac{(U_{\ga,s}^{\la})^{2^*(s)-2}}{|x|^s}\in L^{\tfrac{N}{2}}(\Rn)$ for any $\la>0$, following similar argument as in \cite{FG} (Appendix $A$, Proposition $A.2$), $\left(\frac{(-\De-\frac{\ga}{|x|^2})}{\frac{(U_{\ga,s}^{\la})^{2^*(s)-2}}{|x|^s}}\right)^{-1}$ is well-defined and continuous from $L^2\left(\frac{(U_{\ga,s}^{\la})^{2^*(s)-2}}{|x|^s},\Rn\right)$ into $\dot{H}^1(\Rn)$. Furthermore, it is a compact, self-adjoint operator from $L^2\left(\frac{(U_{\ga,s}^{\la})^{2^*(s)-2}}{|x|^s},\Rn\right)$ into itself. Hence, its spectrum is discrete. Thus we have the following Rayleigh quotient characterization for eigenvalues 
		\be\no
		\eta_{m}= R(w_m):= \frac{\|w_m\|_{\ga}^2}{\int_{\Rn}\tfrac{(U_{\ga,s}^{\la_0})^{2^*(s)-2}}{|x|^s}|w_m|^2\,{\rm d}x},
		\ee
		where $w_m$ denotes the $m$-th eigenfunction of $\mathcal{L}_{\ga,\la_0}$ and further if we consider $V_m= \operatorname{span} \{w_1,\cdots,w_m\},$ we have the following variational characterization for the eigenvalues
		\bea
		\eta_m =\min_{\stackrel{v\in V_m}{v\neq 0}}R(v) &=& \min_{v\perp V_{m-1}}R(v)\no\\
		&=&\min_{\stackrel{W\subseteq \dot{H}^1(\Rn)}{\dim (W)=m}} \max_{\stackrel{v\in W}{v\neq 0}}R(v).\no
		\eea
		In particular, $$\eta_1=\min_{\stackrel{v\in\dot{H}^1(\Rn)}{v\neq 0}}R(v).$$
		 From the above characterization of eigenvalues, we have,
		\be\label{3ev}
		\eta_3 \leq \frac{\|w\|_{\ga}^2}{\int_{\Rn}\frac{(U_{\ga,s}^{\la_0})^{2^*(s)-2}}{|x|^s}w^2\,{\rm d}x}\qquad\text{for all }w\perp T_{c_0U_{\ga,s}^{\la_0}}\mathcal{M},
		\ee
		with equality if and only if $w$ is the third eigenfunction. We also have $\eta_1=\mu_{\ga,s}(\Rn)^{\tfrac{2^*(s)}{2}},\,\eta_2=(2^*(s)-1)\mu_{\ga,s}(\Rn)^{\tfrac{2^*(s)}{2}}$ and all eigenvalues are independent of the scaling factor $\la>0$. Moreover,
		$$T_{U_{\ga,s}^{\la_0}}\mathcal{M}= \operatorname{span} \{U_{\ga,s}^{\la_0}\} \bigoplus \operatorname{span} \left\{\frac{{\rm d}}{{\rm d}\la}\bigg{|}_{\la=\la_0}U_{\ga,s}^{\la}\right\},$$
		that is the direct sum of the first and second eigenspace of the operator $\mathcal{L}_{\ga,\la_0}$. Since $(\phi-c_0U_{\ga,s}^{\la_0})\perp T_{c_0U_{\ga,s}^{\la_0}}\mathcal{M},$ we can write, 
		$$\phi = c_0U_{\ga.s}^{\la_0}+dv,$$
		where $v\perp T_{U_{\ga,s}^{\la_0}}\mathcal{M}$ with $\|v\|_{\ga}=1$ and $d=\dist(\phi,\mathcal{M})$. Now, using Taylor series expansion around $U_{\ga,s}^{\la_0}$ yields,
		\bea
		\int_{\Rn}\frac{|\phi|^{2^*(s)}}{|x|^s}\,{\rm d}x &=& \int_{\Rn}\frac{|c_0U_{\ga,s}^{\la_0}+dv|^{2^*(s)}}{|x|^s}\,{\rm d}x\no\\
		&=&|c_0|^{2^*(s)}\int_{\Rn}\frac{|U_{\ga,s}^{\la_0}|^{2^*(s)}}{|x|^s}\,{\rm d}x + 2^*(s) d|c_0|^{2^*(s)-1}\int_{\Rn}\frac{|U_{\ga,s}^{\la_0}|^{2^*(s)-2}}{|x|^s}U_{\ga,s}^{\la_0}v\,{\rm d}x\no\\
		&\,&\quad + \frac{2^*(s)(2^*(s)-1)|c_0|^{2^*(s)-2}d^2}{2!}\int_{\Rn} \frac{(U_{\ga,s}^{\la_0})^{2^*(s)-2}}{|x|^s}v^2\,{\rm d}x + o(d^2)\no\\
		&\leq& |c_0|^{2^*(s)}\mu_{\ga,s}(\Rn)^{-\tfrac{2^*(s)}{2}}+ |c_0|^{2^*(s)-2} \frac{2^*(s)(2^*(s)-1)d^2}{2!}\frac{1}{\eta_3}+o(d^2).\no
		\eea
		Since, $U_{\ga,s}^{\la_0}$ solves
			\be\label{MEQWHSC}
		-\De W -\frac{\ga}{|x|^2}W =\mu_{\ga,s}(\Rn)^{\tfrac{2^*(s)}{2}}\frac{W^{2^*(s)-1}}{|x|^s}\quad \text{in }\Rn\setminus \{0\}.
		\ee
		 and $v\perp U_{\ga,s}^{\la_0}$, 
		$$0=\langle U_{\ga,s}^{\la_0},v\rangle_{\ga}=\int_{\Rn}\frac{(U_{\ga,s}^{\la_0})^{2^*(s)-2}}{|x|^s}U_{\ga,s}^{\la_0} v\,{\rm d}x.$$
		Now, as $\eta_2= (2^*(s)-1)\mu_{\ga,s}(\Rn)^{\tfrac{2^*(s)}{2}}$, we obtain
		\be\no
		\int_{\Rn}\frac{|\phi|^{2^*(s)}}{|x|^s}\,{\rm d}x\leq |c_0|^{2^*(s)}\mu_{\ga,s}(\Rn)^{-\tfrac{2^*(s)}{2}}+ \frac{2^*(s)d^2}{2!}|c_0|^{2^*(s)-2}\mu_{\ga,s}(\Rn)^{-\tfrac{2^*(s)}{2}}\frac{\eta_2}{\eta_3}+o(d^2).
		\ee
		Hence, 
		\bea
		\|\phi\|_{L^{2^*(s)}(\Rn,|x|^{-s})}^2 &\leq& |c_0|^2\mu_{\ga,s}(\Rn)^{-1}\left(1+\frac{2^*(s)d^2}{2}\frac{\eta_2}{\eta_3}+o(d^2)\right)^{\tfrac{2}{2^*(s)}}\no\\
		&\leq& |c_0|^2\mu_{\ga,s}(\Rn)^{-1}\left(1+d^2|c_0|^{-2}\frac{\eta_2}{\eta_3}+o(d^2)\right),\no
		\eea
		where in the last inequality we have used $(1+t)^{p}\leq 1+ pt,$ holds for $t>0$ and $p\in (0,1)$.
		Hence we obtain the desired expression
		\be\no
		\|\phi\|_{\ga}^2-\mu_{\ga,s}(\Rn)\|\phi\|_{L^{2^*(s)}(\Rn,|x|^{-s})}^2\geq d^2 \left(1-\frac{\eta_2}{\eta_3}\right) + o(d^2).
		\ee
		Thus the lemma holds for $\al = \left(1-\frac{\eta_2}{\eta_3}\right)>0$. To see this estimate is sharp we argue as follows.
		
		Take $\phi= U_{\ga,s}+dv$ where $v$ is the third eigenfunction of $\mathcal{L}_{\ga,1}$ and $d$ is a small positive number. Then if $d$ is small enough we get $\dist(\phi,\M)=d$ and the closest point on $\M$ is $U_{\ga,s}$. Thus
		\be\no
		\eta_3= \frac{\|\phi\|_{\ga}^2}{\int_{\Rn}\frac{U_{\ga,s}^{2^*(s)-2}}{|x|^s}v^2\,{\rm d}x}=\frac{1+d^2}{\int_{\Rn}\frac{U_{\ga,s}^{2^*(s)-2}}{|x|^s}v^2\,{\rm d}x}.
		\ee
		It is clear that the last claim holds locally. If we assume that the claim is not true globally for any $d>0$, then there exists a sequence $(c_m,\la_m)\nrightarrow (1,1)$ such that $c_mU_{\ga,s}^{\la_m}\to U_{\ga,s}$ in $\dot{H}^1(\Rn),$ which is absurd.\\
		Now the same argument as above yields
		$$\|\phi\|_{\ga}^2-\mu_{\ga,s}(\Rn)\|\phi\|_{L^{2^*(s)}(\Rn,|x|^{-s})}^2= d^2\left(1-\frac{\eta_2}{\eta_3}\right)+o(d^2).$$
	\end{proof}
	\begin{proof}[Proof of Theorem \eqref{BEHSE}]
		The sharpness of the theorem follows from the proof of the last part of the previous lemma.
		
		We have the local result from the previous lemma in a region around $\mathcal{M}$. Now we are going to use a Concentration Compactness argument to show that some stability estimate must hold outside this local region also. Since we are working on $\Rn$, we can cite this step as a straightforward application of Concentration Compactness as presented by P.L. Lions \cite{PLL} or M. Struwe \cite{S}. 
		
		Assume that the Theorem does not hold. Then we would be able to find a sequence $\{u_m\}_{m\in\mathbb{N}}$ such that
			$$\frac{\|u_m\|_{\ga}^2-\mu_{\ga,s}(\Rn)\|u_m\|_{L^{2^*(s)}(\Rn,|x|^{-s})}^2}{\dist(u_m,\mathcal{M})^2}\to 0\,\,\text{as }m\to\infty.$$
		By homogeneity, we can assume $\|u_m\|_{\ga}=1$ and after selecting a subsequence we can also assume that $\dist(u_m,\mathcal{M})\to \theta\in \[0,1\]$. We note that, $\dist(u_m,\mathcal{M})\leq \|u_m\|_{\ga}=1.$
		
		If $\theta=0,$ then we get using Lemma \eqref{L2.1},
			$$\frac{\|u_m\|_{\ga}^2-\mu_{\ga,s}(\Rn)\|u_m\|_{L^{2^*(s)}(\Rn,|x|^{-s})}^2}{\dist(u_m,\mathcal{M})^2}\geq \al + o(1)>0,$$
			which contradicts our assumption.
		
		The other possibility is $L>0.$ In this case, we must have, $\|u_m\|_{\ga}^2-\mu_{\ga,s}(\Rn)\|u_m\|_{L^{2^*(s)}(\Rn,|x|^{-s})}^2\to 0$ as $m\to\infty,\,\|u_m\|_{\ga}=1$ for each $m\in\mathbb{N}$. Now scaling argument as in Proposition \ref{PDHSE} (see also Struwe \cite{S}) yields a sequence $\{\la_m\}_{m\in\mathbb{N}}$ so that $\la_m^{\tfrac{N-2}{2}}U_{\ga,s}(\la_m x)\to \pm U_{\ga,s}$ in $\dot{H}^1(\Rn)$ as $m\to\infty.$ This implies that $\dist(u_m,\M)=\dist(\la_m^{\tfrac{N-2}{2}}u(\la_m x),\M)\to 0$ as $m\to\infty.$ This gives the desired contradiction.
		
	\end{proof}

	\medspace

	\section{Profile decomposition for Hardy-Sobolev equation}\label{SPDHS}
	
		In the seminal work due to Struwe \cite{S}, he proved that all critical points at infinity of the functional $J$ are induced by limits of sums of Talenti bubbles, where 
		\be\label{EFSI}
		J(u):= \frac{1}{2}\int_{\Rn}|\nabla u|^2\,{\rm d}x-\frac{1}{2^*}\int_{\Rn}|u|^{2^*}\,{\rm d}x,\qquad u\in\dot{H}^1(\Rn),
		\ee
	whose Euler-Lagrange equation is $\De u+|u|^{2^*-2}u=0$ in $\Rn.$
	Here we will give an analog of the above result for the critical Hardy-Sobolev inequality.
	
	In this section, we study the Palais-Smale sequences of the problem \eqref{MainEq}, where $0<\ga<\ga_{H},\,s\in (0,2)$. Define the associated energy functional 
	\bea\label{EF}
	I_{\ga,s}(u) &:=& \frac{1}{2}\int_{\Rn}\[|\nabla u|^2-\ga\frac{|u|^2}{|x|^2}\]\,{\rm d}x - \frac{1}{2^*(s)}\int_{\Rn}\frac{|u|^{2^*(s)}}{|x|^s}\,{\rm d}x\no\\
	&=& \frac{1}{2}\|u\|_{\ga}^2 - \frac{1}{2^*(s)}\int_{\Rn}\frac{|u|^{2^*(s)}}{|x|^s}\,{\rm d}x.
	\eea
	
	We say that the sequence $(u_n)_n\in \dot{H}^1(\Rn)$ is a $(PS)$ sequence for $I_{\ga,s}$ at level $\ba$ if $I_{\ga,s}(u_n)\to \ba$ and $(I_{\ga,s})'(u_n)\to 0$ in $H^{-1}(\Rn)$. It is easy to see that the weak limit of a $(PS)$ sequence solves \eqref{MainEq} except for the positivity.
	
	However, the main difficulty is that the $(PS)$ sequence may not converge strongly and hence the weak limit can be zero even if $\ba>0.$
	The main purpose of this section is to classify $(PS)$ sequences of
	the functional $I_{\ga,s}$. Classification of $(PS)$
	sequences have been done for various problems having a lack of compactness, to quote a few, we cite in the local case \cite{BS, Sm}  with Hardy potentials and in~\cite{S} without Hardy potentials.
	
	\begin{theorem}\label{PDHSE}
		Let $\{u_n\}$ be a Palais-Smale sequence for $I_{\ga,s}$ at level $\ba$. Then up to a subsequence \textup{(still denoted by $u_n$)} the following properties hold:
		there exists $m\in\mathbb{N}_0=\mathbb{N}\cup\{0\}$, $m$ sequences $\{R_n^k\}_n\subset \mathbb{R}_{+}$ \textup{(for $1\leq k\leq m$)} and $u_0\in\dot{H}^1(\Rn)$ such that
		\bea
		&(i)&\, u_n = u_0+\sum_{k=1}^{m}\(U_{\ga,s}\)^{R_n^k}+o(1),\no\\
		&(ii)&\, I'_{\ga,s}(u_0)=0\no\\
		&(iii)&\, R_n^k\to 0\,\text{ or, }R_n^k\to\infty,\,\frac{R_{n}^{k+1}}{R_{n}^{k}}\to\infty\text{ as }k\to \infty\no\\
		&(iv)&\, \ba = I_{\ga,s}(u_0)+\sum_{k=1}^{m} I_{\ga,s}(U_{\gamma,s}^{R_n^k})+o(1),\no
		\eea
		where $o(1)\to 0$ in $\dot{H}^1(\Rn)$.
	\end{theorem}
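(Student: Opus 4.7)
\textbf{Proof plan for Theorem \ref{PDHSE}.}
The plan is to execute Struwe's iterative bubble-extraction scheme, adapted to the fact that \eqref{MainEq} admits only the one-parameter symmetry of dilation about the origin, $u\mapsto u_\lambda(x):=\lambda^{-(N-2)/2}u(x/\lambda)$. Because the weights $|x|^{-2}$ and $|x|^{-s}$ destroy translation invariance, the decomposition can only carry scaling parameters $R_n^k$: bubbles concentrate at $0$ or escape to infinity, never to a point $x_0\neq 0$.

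The first step is boundedness of $\{u_n\}$ in $\dot H^1$. The identity
\[
I_{\gamma,s}(u_n)-\tfrac12\langle I'_{\gamma,s}(u_n),u_n\rangle=\Big(\tfrac12-\tfrac{1}{2^*(s)}\Big)\int_{\mathbb{R}^N}\frac{|u_n|^{2^*(s)}}{|x|^s}\,dx,
\]
combined with \eqref{HSI} and Remark \ref{eqnorm}, gives $\|u_n\|_\gamma\lesssim 1$. Passing to a subsequence, $u_n\rightharpoonup u_0$ in $\dot H^1$ and a.e.; testing $I'_{\gamma,s}(u_n)$ against $C^\infty_c(\mathbb{R}^N\setminus\{0\})$ and using local compactness of the embedding into $L^{2^*(s)}(|x|^{-s})$ away from the singularity yields $I'_{\gamma,s}(u_0)=0$. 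A Brezis--Lieb splitting, valid both in the inner product $\langle\cdot,\cdot\rangle_\gamma$ and in the weighted norm $\|\cdot\|_{L^{2^*(s)}(|x|^{-s})}$ (only pointwise convergence is needed), then shows that $v^{(1)}_n:=u_n-u_0\rightharpoonup 0$ is itself a Palais--Smale sequence, at the reduced level $\beta-I_{\gamma,s}(u_0)$.

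The core step is the extraction of a single bubble. If $v^{(1)}_n\to 0$ strongly, stop. Otherwise, I would choose a scale $R_n^{(1)}$ capturing the concentration, for instance
\[
R_n^{(1)}:=\inf\Big\{R>0:\int_{B_R(0)}\frac{|v^{(1)}_n|^{2^*(s)}}{|x|^s}\,dx\geq\delta_0\Big\},
\]
with $\delta_0$ chosen smaller than the minimal bubble energy. Because $I_{\gamma,s}$ and both relevant norms are invariant under dilation, the rescaled sequence $\tilde v^{(1)}_n(x):=(R_n^{(1)})^{(N-2)/2}v^{(1)}_n(R_n^{(1)}x)$ is again PS and bounded. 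Its weak limit $w_1$ is a finite-energy solution of \eqref{MainEq}, nonzero by the choice of $R_n^{(1)}$, hence equals $U_{\gamma,s}$ by the Chou--Chu classification \cite{CC}. Setting $v^{(2)}_n:=v^{(1)}_n-(U_{\gamma,s})^{R_n^{(1)}}$ and applying Brezis--Lieb once more produces a new PS sequence at level $\beta-I_{\gamma,s}(u_0)-I_{\gamma,s}(U_{\gamma,s})$, and one iterates.

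Termination and scale separation follow from energy quantization: each extracted bubble contributes the fixed positive energy $\frac{2-s}{2(N-s)}\mu_{\gamma,s}^{(N-s)/(2-s)}$, so the iteration stops after finitely many steps $m$ and item (iv) holds by accumulation of these contributions. The dichotomy $R_n^k\to 0$ or $R_n^k\to\infty$ is forced: a bounded subsequence of scales would produce a bubble absorbed into $u_0$ at the previous step. The relative separation $R_n^{k+1}/R_n^k\to 0$ or $\infty$ is built into the successive rescalings, since otherwise two consecutive profiles would interact and the rescaled remainder would fail to satisfy $\tilde v^{(k+1)}_n\rightharpoonup 0$. The main obstacle is the nontriviality of $w_1$ in the extraction step: in the absence of translation invariance one must rule out escape of the concentration to a point $x_0\neq 0$, which is carried out by a concentration-compactness dichotomy adapted to dilations about the origin, in the spirit of \cite{BS, Sm} for Hardy-type critical problems, combined with the uniqueness result \cite{CC} that identifies the only finite-energy positive limit as $U_{\gamma,s}$.
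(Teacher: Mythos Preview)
Your outline is essentially the paper's proof: boundedness via the PS identity, weak limit $u_0$ solves \eqref{MainEq}, Br\'ezis--Lieb reduces to $v_n\rightharpoonup 0$, select a Levy scale $R_n$, rescale, extract a nontrivial solution, iterate. Two points, however, deserve correction.

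First, your appeal to Chou--Chu to identify $w_1$ with $U_{\gamma,s}$ is not justified: \cite{CC} classifies only \emph{positive} solutions of \eqref{MainEq}, and the theorem as stated puts no sign hypothesis on $\{u_n\}$, so the rescaled weak limit $w$ is a priori just a nontrivial finite-energy solution. The paper does not perform this identification in the proof; it only uses that $I_{\gamma,s}(w)\geq c_0>0$ for every nonzero solution $w$ (since $\|w\|_\gamma^2=\int|x|^{-s}|w|^{2^*(s)}$ forces $\|w\|_\gamma^2\geq \mu_{\gamma,s}^{(N-s)/(2-s)}$), which is all that is needed for the iteration to terminate. The explicit form $U_{\gamma,s}$ enters only in the nonnegative corollary (Theorem~\ref{SDHS}).

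Second, the sentence ``nonzero by the choice of $R_n^{(1)}$'' is not an argument, as you acknowledge later by calling it the main obstacle. The paper handles this (its Step~6) not by an abstract dichotomy but by a short direct computation: if the rescaled weak limit $w=0$, pick $\varphi\in C_c^\infty(B_1)$ and test the PS condition for $w_n$ against $\varphi^2 w_n$. Using Rellich locally, the Hardy inequality, and the Hardy--Sobolev inequality together with $\delta<\mu_{\gamma,s}^{(N-s)/(2-s)}$, one gets $\int|\nabla(\varphi w_n)|^2\leq (1-\tau)\int|\nabla(\varphi w_n)|^2+o(1)$, whence $\int_{B_r}|x|^{-s}|w_n|^{2^*(s)}\to 0$ for every $r<1$; this contradicts $\int_{B_1}|x|^{-s}|w_n|^{2^*(s)}=\delta$ precisely because $s>0$ forces all the mass to sit near the origin. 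That is the place where the lack of translation invariance is actually used.
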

	
	\begin{proof}
		We prove the theorem in several steps.
		
		\noindent\underline{\bf Step 1:} Using standard arguments it follows that there exists $M>0$ such that $$\sup_{n\in\mathbb{N}}\|u_n\|_{\ga}\leq M.$$
		More precisely, as $n\to\infty$
		\bea
		\ba + o(1)+ o(1)\|u_n\|_{\gamma} &\geq& I_{\ga,s}(u_n) \, - \,
		\frac{1}{2^*(s)} \prescript{}{H^{-1}}{\big\langle}I'_{\ga,s}(u_n), u_n{\big\rangle}_{\dot{H}^1}\no\\
		&=& \left(\frac{1}{2}-\frac{1}{2^*(s)}\right)\|u_n\|_{\gamma}^{2}.\no
		\eea
		As $2^*(s)>2$, from the above estimate, it follows that $(u_n)_n$ is bounded in $\dot{H}^1(\Rn)$. Consequently, there exists $u_0$ in $\dot{H}^1(\Rn)$
		such that, up to a subsequence, still denoted by $(u_n)_n$, $u_n\rightharpoonup u_0$ in $\dot{H}^1(\Rn)$ and $u_n\to u_0$ a.e. in $\Rn$.  Moreover, as  $\prescript{}{H^{-1}}{\big\langle}I'_{\ga,s}(u_n), v{\big\rangle}_{\dot H^1}\rightarrow 0$ as $n\rightarrow\infty$ for all  $v\in\dot{H}^1(\Rn)$, then
		\be\label{B6}
		-\De u_n -\gamma \frac{u_n}{|x|^{2}} - \frac{|u_n|^{2^*(s)-2}u_n}{|x|^s}\To 0\quad \mbox{in}\quad H^{-1}(\Rn).
		\ee
		\underline{\bf Step 2:}
		From \eqref{B6}, letting $n \rightarrow \infty$, we get
		\begin{equation}\lab{24-5-1}
			\langle u_n, v\rangle_{\gamma}-\int_{\Rn}\frac{|u_n|^{2^*(s)-2}u_nv}{|x|^s}\;{\rm d}x \, {\rightarrow} \,  0,\qquad\text{for all }v\in\dot{H}^1(\Rn).
		\end{equation}
		As $u_n\rightharpoonup u_0$ in $\dot{H}^1(\Rn)$, it follows that $\langle u_n, v\rangle_{\gamma}\to \langle u_0, v\rangle_{\gamma}$ for all $v\in\dot{H}^1(\Rn)$.
		
		\noindent
		{\bf Claim 1}: $\displaystyle\int_{\Rn}\frac{|u_n|^{2^*(s)-2}u_nv}{|x|^s}\;{\rm d}x \To \int_{\Rn}\frac{|u_0|^{2^*(s)-2} u_0 v}{|x|^s} {\rm d}x$
		for all $v\in\dot{H}^1(\Rn)$.
		
		Indeed, $u_n\to u_0$ a.e. in $\Rn$ and
		\be\lab{24-5-2}\int_{\Rn}\frac{|u_n|^{2^*(s)-2}u_nv}{|x|^s}\;{\rm d}x = \int_{B_R}\frac{|u_n|^{2^*(s)-2}u_nv}{|x|^s}\;{\rm d}x +\int_{\Rn\setminus B_R}\frac{|u_n|^{2^*(s)-2}u_nv}{|x|^s}\;{\rm d}x. \ee
		On $B_R$ we are going to use Vitali's convergence theorem. For that, given any $\varepsilon>0$, we choose $\Om\subset
		B_R$ such that
		$\displaystyle
		\bigg(\int_{\Omega}\frac{|v|^{2^*(s)}}{|x|^s}{\rm d}x\bigg)^{\tfrac{1}{2^*(s)}}<\frac{\varepsilon}{(M\mu_{\ga,s}^{-\frac{1}{2}})^{2^*(s)-1}}$. Since $\frac{|v|^{2^*(s)}}{|x|^s}$ is in  $L^1(\Rn)$, the above choice makes sense. Therefore using H\"older's inequality and Hardy-Sobolev's inequality we get,
		
		\bea
		\bigg{|}\int_{\Omega}\frac{|u_n|^{2^*(s)-2}u_nv}{|x|^s}\;{\rm d}x\bigg{|} &\leq& \int_{\Omega}\frac{|u_n|^{2^*(s)-1}|v|}{|x|^s}\;{\rm d}x \no\\
		&\leq& \bigg(\int_{\Omega}\frac{|u_n|^{2^*(s)}}{|x|^s}{\rm d}x\bigg)^{\tfrac{2^*(s)-1}{2^*(s)}}\bigg(\int_{\Omega}\frac{|v|^{2^*(s)}}{|x|^s}{\rm d}x\bigg)^{\tfrac{1}{2^*(s)}}\no\\
		&\leq& \mu_{\ga,s}^{-\frac{2^*(s)-1}{2}}\|u_n\|_{\gamma}^{2^*(s)-1}\bigg(\int_{\Omega}\frac{|v|^{2^*(s)}}{|x|^s}{\rm d}x\bigg)^{\tfrac{1}{2^*(s)}}< \varepsilon\no
		\eea
		Thus $\frac{|u_n|^{2^*(s)-2}u_nv}{|x|^s}$ is uniformly integrable in $B_R$. Therefore,  using Vitali's convergence theorem, we can pass to the limit in the 1st integral on RHS of \eqref{24-5-2}.
		
		To estimate the integral now on $B_R^c$, we first set $v_n=u_n-u_0$.
		Then $v_n\rightharpoonup 0$ in $\dot{H}^1(\Rn).$ It is not difficult to see that for every $\varepsilon >0$ there exists
		$C_\varepsilon>0$ such that
		$$\bigg{|}\frac{|v_n+u_0|^{2^*(s)-2}(v_n+u_0)}{|x|^s}-\frac{|u_0|^{2^*(s)-2}u_0}{|x|^s} \bigg{|} <\varepsilon \frac{|v_n|^{2^*(s)-1}}{|x|^s}+C_\varepsilon\frac{|u_0|^{2^*(s)-1}}{|x|^s}.$$
		Therefore,
		
		\begin{align*}
			&\bigg{|}\int_{B_R^c}\bigg\{\frac{|u_n|^{2^*(s)-2}u_n}{|x|^s}-\frac{|u_0|^{2^*(s)-2}u_0}{|x|^s}\bigg\}v\;{\rm d}x\bigg{|}\\
			&\leq \bigg[\varepsilon \int_{B_R^c}\frac{|v_n|^{2^*(s)-1}|v|}{|x|^s}\;{\rm d}x+C_\varepsilon\int_{B_R^c}\frac{|u_0|^{2^*(s)-1}|v|}{|x|^s}\,{\rm d}x\bigg] \\
			&\leq \bigg[\varepsilon \bigg(\int_{B_R^c}\frac{|v_n|^{2^*(s)}}{|x|^s}\;{\rm d}x\bigg)^{\tfrac{2^*(s)-1}{2^*(s)}}\bigg(\int_{B_R^c}
			\frac{|v|^{2^*(s)}}{|x|^s}\,{\rm d}x\bigg)^{\tfrac{1}{2^*(s)}}\\	&\qquad\qquad\qquad+C_\varepsilon\bigg(\int_{B_R^c}\frac{|u_0|^{2^*(s)}}{|x|^s}\,{\rm d}x\bigg)^{\tfrac{2^*(s)-1}{2^*(s)}}\bigg(\int_{B_R^c}\frac{|v|^{2^*(s)}}{|x|^s}\,{\rm d}x\bigg)^{\tfrac{1}{2^*(s)}}\bigg]\\
			&\leq C\bigg[\varepsilon \|v_n\|_{\gamma}^{2^*(s)-1}\bigg(\int_{B_R^c}
			\frac{|v|^{2^*(s)}}{|x|^s}\,{\rm d}x\bigg)^{\tfrac{1}{2^*(s)}}
			+C_\varepsilon\|u_0\|_{\gamma}^{2^*(s)-1}\bigg(\int_{B_R^c}
			\frac{|v|^{2^*(s)}}{|x|^s}\,{\rm d}x\bigg)^{\tfrac{1}{2^*(s)}}\bigg].
		\end{align*}
		
		Since $\left\{\|v_n\|_{\ga}\right\}_n$ is uniformly bounded and $\frac{|v|^{2^*(s)}}{|x|^s}\in L^1(\Rn)$, given $\var>0$,  we can choose $R>0$ large enough such that	$$\bigg{|}\int_{B_R^c}\bigg\{\frac{|u_n|^{2^*(s)-2}u_n}{|x|^s}-\frac{|u_0|^{2^*(s)-2} u_0}{|x|^s}\bigg\}v\;{\rm d}x\bigg{|}<\var.$$
		This completes the proof of claim 1.
		
		Hence \eqref{24-5-1} yields that $u_0$ is a solution of~\eqref{MainEq}.
		
		\medskip
		
		\noindent\underline{\bf Step 3:} Here we show that $\{u_n-u_0\}_n$ is a $(PS)$ sequence for $I_{\ga,s}$ at the level $\ba-I_{\ga,s}(u_0)$.
		To see this, first, we observe that as $n\to\infty$
		$$\|u_n-u_0\|_{\gamma}^2 = \|u_n\|_{\gamma}^2-\|u_0\|_{\gamma}^2+o(1),$$
		and by the Br\'ezis-Lieb lemma as $n\to\infty$
		$$\int_{\Rn}\frac{|u_n-u_0|^{2^*(s)}}{|x|^s}{\rm d}x = \int_{\Rn}\frac{|u_n|^{2^*(s)}}{|x|^s}\;{\rm d}x - \int_{\Rn}\frac{|u_0|^{2^*(s)}}{|x|^s}\;{\rm d}x+o(1).$$
		Therefore, as $n\to\infty,$
		\begin{align*}
			I_{\ga,s} (u_n-u_0) &= \frac{1}{2}\|u_n-u_0\|_{\gamma}^2-\frac{1}{2^*(s)}\int_{\Rn}\frac{|u_n-u_0|^{2^*(s)}}{|x|^s}\;{\rm d}x\\
			&= \frac{1}{2}\|u_n\|_{\gamma}^2-\frac{1}{2^*(s)}\int_{\Rn}\frac{|u_n|^{2^*(s)}}{|x|^s}\;{\rm d}x\\
			&\,-\bigg\{\frac{1}{2}\|u_0\|_{\gamma}^2-\frac{1}{2^*(s)}\int_{\Rn}\frac{|u_0|^{2^*(s)}}{|x|^s}\;{\rm d}x\bigg\}+o(1)\\
			&= I_{\ga,s}(u_n)-I_{\ga,s}(u_0)+o(1)\\
			&\To \ba-I_{\ga,s}(u_0).
		\end{align*}
		
		Further, as $\prescript{}{H^{-1}}{\big\langle}I_{\ga,s}'(u_0), v{\big\rangle}_{\dot H^1} =0$ for all $v\in\dot{H}^(\Rn)$, we  obtain
		\begin{align}\lab{24-5-3}
			\prescript{}{H^{-1}}{\big\langle}I_{\ga,s}'(u_n-u_0), v{\big\rangle}_{\dot H^1}&=\langle u_n-u_0,v\rangle_{\gamma}-\int_{\Rn}\frac{|u_n- u_0|^{2^*(s)-2}(u_n-u_0)v}{|x|^s}\;{\rm d}x\no\\
			&=\langle u_n,v\rangle_{\gamma}-\int_{\Rn}\frac{|u_n|^{2^*(s)-2}u_nv}{|x|^s}\;{\rm d}x\no\\
			&-\bigg(\langle u_0,v\rangle_{\gamma}-\int_{\Rn}\frac{|u_0|^{2^*(s)-2}u_0 v}{|x|^s}\;{\rm d}x\bigg)\\
			&+\int_{\Rn}\bigg\{\frac{|u_n|^{2^*(s)-2}u_n}{|x|^s}-\frac{|u_0|^{2^*(s)-2}u_0}{|x|^s}\no\\
			&\qquad\qquad\qquad-\frac{|u_n-u_0|^{2^*(s)-2}(u_n-u_0)}{|x|^s}\bigg\}v {\rm d}x\no\\
			&=o(1)+\int_{\Rn}\bigg\{\frac{|u_n|^{2^*(s)-2}u_n}{|x|^s}-\frac{|u_0|^{2^*(s)-2} u_0}{|x|^s}\no\\
			&\qquad\qquad\qquad\qquad-\frac{|u_n-u_0|^{2^*(s)-2}(u_n-u_0)}{|x|^s}\bigg\}v {\rm d}x.\no
		\end{align}
		
		We observe that
		\begin{align*}
			&\bigg|\left\{|u_n|^{2^*(s)-2}u_n-|u_0|^{2^*(s)-2}
			u_0-|u_n-u_0|^{2^*(s)-2}(u_n-u_0)\right\}\bigg|\\
			&\hspace{4cm}\leq C\bigg(|u_n-u_0|^{2^*(s)-2}|u_0|+|u_0|^{2^*(s)-2}|u_n-u_0|\bigg).
		\end{align*}
		Therefore, following the same method as in the proof of Claim~1 in Step~2,
		we show  that as $n\to\infty$
		\be\lab{25-5-8}\displaystyle\int_{\Rn}\bigg\{\frac{|u_n|^{2^*(s)-2}u_n}{|x|^s}-\frac{|u_0|^{2^*(s)-2} u_0}{|x|^s}-\frac{|u_n-u_0|^{2^*(s)-2}(u_n-u_0)}{|x|^s}\bigg\}v{\rm d}x=o(1)\ee
		for all  $v\in\dot{H}^1(\Rn)$.
		Plugging this back into \eqref{24-5-3}, we complete the proof of
		Step~3.
		\medskip
		
		\noindent\underline{\bf Step 4:} Define $v_n:=u_n-u_0.$
		Then $v_n\rightharpoonup 0$ in $\dot{H}^1(\Rn)$ and by Step~3, $\{v_n\}_n$ is a $(PS)$ sequence for $I_{\ga,s}$ at the level $\ba-I_{\ga,s}(u_0).$
		Thus,
		\be\lab{10-9-22}
		\sup_{n\in\mathbb{N}}\|v_n\|_{\gamma}\leq C \quad\mbox{and}\quad
		\langle v_n, \va\rangle_{\ga}=\int_{\Rn}\frac{|v_n|^{2^*(s)-2}v_n\va}{|x|^s} \, {\rm d}x+o(1)\quad\text{for all }\phi\in \dot{H}^1(\Rn).\ee
		Therefore,	
		$\|v_n\|_{\gamma}^2=\int_{\Rn}\frac{|v_n|^{2^*(s)}}{|x|^s}\;{\rm d}x+o(1)$. Thus, if $\int_{\Rn}\frac{|v_n|^{2^*(s)}}{|x|^s}\;{\rm d}x\longrightarrow 0$, then we are done where $m=0$	and the $(PS)$ sequence $\{u_n\}_n$ admits a strongly convergent subsequence.
		
		If not, let $0<\delta <\mu_{\gamma,s}^{\frac{N-s}{2-s}}$ such that
		$$\limsup_{n\to\infty}\int_{\Rn}\frac{|v_n|^{2^*(s)}}{|x|^s}\;{\rm d}x>\delta.$$
		Up to a subsequence, let $R_n>0$ be such that
		$$\int_{B_{R_n}}\frac{|v_n|^{2^*(s)}}{|x|^s}\;{\rm d}x = \delta$$
		and $R_n$ being minimal with this property. Define
		$$w_n(x):= R_n^{\frac{N-2}{2}}v_n(R_nx).$$ Therefore, $\|w_n\|_{\gamma}=\|v_n\|_{\gamma}$ and
		\be\label{PS3}
		\delta = \int_{B_{R_n}}\frac{|v_n|^{2^*(s)}}{|x|^s}\;{\rm d}x = \int_{B_1}\frac{|w_n|^{2^*(s)}}{|x|^s}\;{\rm d}x.
		\ee
		Therefore, up to a subsequence, there exists $w\in\dot{H}^1(\Rn)$ such that
		$$w_n \deb w\;\mbox{in }\dot{H}^1(\Rn) \quad\mbox{and}\quad
		w_n\to w \mbox{ a.e. in }\Rn.$$
		
		\medskip
		Let us now distinguish two cases $w\neq 0$ and $w=0$.
		\vspace{2mm}
		
		\noindent\underline{\bf Step 5:} Assume that $w\neq 0.$
		
		Since, $w_n\rightharpoonup w\neq 0$ and $v_n\rightharpoonup 0$ in $\dot{H}^1(\Rn)$, it follows that $R_n\to 0$ as $n\to\infty$. Next, we show that $w$ is a solution of \eqref{MainEq}.
		Indeed, thanks to \eqref{10-9-22}, for any  $\phi\in C_c^{\infty}(\Rn)$
		\begin{align}\lab{ReSol}
			\langle w,\phi\rangle_{\gamma} &= \lim_{n\to\infty}\langle w_n,\phi\rangle_{\gamma}\no\\
			&=\lim_{n\to\infty} \int_{\Rn}\nabla w_n(x)\cdot\nabla \phi(x)\,{\rm d}x-\gamma\int_{\Rn}\frac{w_n\phi}{|x|^{2}}\;{\rm d}x\no\\
			&=\lim_{n\to\infty} \left[\int_{\Rn}R_n^{\tfrac{N}{2}}\nabla v_n(R_nx)\cdot\nabla \phi(x)\,{\rm d}x-\gamma\int_{\Rn}\frac{R_n^{\frac{N-2}{2}}v_n(R_nx)\phi(x)}{|x|^{2}}\;{\rm d}x\right]\\
			&=\lim_{n\to\infty} \left[\int_{\Rn}R_n^{-\tfrac{N}{2}}\nabla v_n(x)\cdot\nabla \phi(\tfrac{x}{R_n})\,{\rm d}x-\gamma\int_{\Rn}\frac{R_n^{-\frac{N-2}{2}}v_n(x)\phi(\frac{x}{R_n})}{|x|^{2}}\;{\rm d}x\right]\no\\ &=\lim_{n\to\infty}\int_{\Rn}\!\!\!\frac{|v_n|^{2^*(s)-2}v_n}{|x|^s}R_n^{-\frac{N-2}{2}}\phi\big(\tfrac{x}{R_n}\big)\;{\rm d}x=\lim_{n\to\infty}\int_{\Rn}\!\!\!\frac{|w_n|^{2^*(s)-2}w_n}{|x|^s}\phi(x)\;{\rm d}x.\no
		\end{align}
		
		Clearly $\frac{|w_n|^{2^*(s)-2}w_n}{|x|^s}\phi\to \frac{|w|^{2^*(s)-2}w}{|x|^s}\phi$ a.e. in $\Rn$,
		since $w_n\to w$ a.e. in $\Rn$. Further, arguing as in the proof of Claim~1 in Step~2, we have
		$\frac{|w_n|^{2^*(s)-2}w_n}{|x|^s}\phi$ is uniformly integrable. Therefore,
		as $\phi$ has compact support, using Vitali's convergence theorem we obtain
		\be\lab{ReSol1}\lim_{n\to\infty}\int_{\Rn}\frac{|w_n|^{2^*(s)-2}w_n}{|x|^s}\phi(x)\;{\rm d}x = \int_{\Rn}\frac{|w|^{2^*(s)-2}w\phi}{|x|^s}{\rm d}x.\ee
		Combining \eqref{ReSol1} along with \eqref{ReSol}, we conclude
		that $w$ is a solution of \eqref{MainEq}.
		
		Define	 $$z_n(x):=v_n(x)-R_n^{-\frac{N-2}{2}}w(\tfrac{x}{R_n}).$$
		
		\noindent{\bf Claim 2:} $\{z_n\}_n$ is a $(PS)$ sequence for $I_{\ga,s}$ at the level $\ba-I_{\ga,s}(u_0)- I_{\ga,s}(w).$
		
		To prove the claim, set $$\tilde{z}_n(x):=R_n^{\frac{N-2}{2}}z_n(R_nx).$$ Then
		$$ \tilde{z}_n(x)= w_n(x)-w(x) \quad\mbox{and}\quad  \|\tilde{z}_n\|_{\gamma}=\|w_n-w\|_{\gamma}=\|z_n\|_{\gamma}.$$
		From Br\'ezis-Lieb lemma,
		\begin{align*}
			\int_{\Rn}\frac{|w_n(x)|^{2^*(s)}}{|x|^s}\;{\rm d}x-\int_{\Rn}\frac{|w|^{2^*(s)}}{|x|^s}\;{\rm d}x
			&=\int_{\Rn}\frac{|w_n-w|^{2^*(s)}}{|x|^s}\;{\rm d}x+o(1).
		\end{align*}
		
		Therefore, using the above relations, as $n\to\infty$
		\begin{align*}
			I_{\ga,s}(z_n)&=\frac{1}{2}\|z_n\|_{\gamma}^2-\frac{1}{2^*(s)}\int_{\Rn}\frac{|z_n|^{2^*(s)}}{|x|^s}\;{\rm d}x\\
			&=\frac{1}{2}\|w_n-w\|_{\gamma}^2-\frac{1}{2^*(s)}\int_{\Rn}\frac{|w_n-w|^{2^*(s)}}{|x|^s}\;{\rm d}x\\
			&=\frac{1}{2}\big(\|w_n\|_{\gamma}^2-\|w\|_{\gamma}^2\big)-\frac{1}{2^*(s)}\int_{\Rn}\frac{|w_n(x)|^{2^*(s)}}{|x|^s}\;{\rm d}x+\frac{1}{2^*(s)}\int_{\Rn}\frac{|w|^{2^*(s)}}{|x|^s}\;{\rm d}x+o(1)\\
			&=\frac{1}{2}\|v_n\|_{\gamma}^2-\frac{1}{2^*(s)}\int_{\Rn}\frac{|v_n(x)|^{2^*(s)}}{|x|^s}\;{\rm d}x-\bigg(\frac{1}{2}\|w\|_{\gamma}^2-\frac{1}{2^*(s)}\int_{\Rn}\frac{|w|^{2^*(s)}}{|x|^s}\;{\rm d}x\bigg)+o(1)\\
			&=I_{\ga,s}(v_n)- I_{\ga,s}(w)+o(1)\\
			&=\ba-I_{\ga,s}(u_0)- I_{\ga,s}(w)+o(1).
		\end{align*}
		Next, let $\phi\in C^\infty_c(\Rn)$ be arbitrary and set $\phi_n(x):=R_n^{\frac{N-2}{2}}\phi(R_nx)$. This in turn implies that
		$\|\phi_n\|_{\gamma}=\|\phi\|_{\gamma}$ and $\phi_n\rightharpoonup 0$ in $\dot{H}^1(\Rn)$.
		
		Therefore,
		\begin{align}\lab{DIz}
			\prescript{}{\dot{H}^{-1}}{\big\langle}I'_{\ga,s}(z_n), \phi{\big\rangle}_{\dot \dot{H}^1} &=\langle z_n,\phi\rangle_{\gamma}-\int_{\Rn}\frac{|z_n|^{2^*(s)-2}z_n\phi}{|x|^s}\;{\rm d}x\no\\
			&= \langle \tilde{z}_n,\phi_n\rangle_{\gamma}-\int_{\Rn}\frac{|\tilde{z}_n|^{2^*(s)-2}\tilde{z}_n\phi_n}{|x|^s}\;{\rm d}x\no\\
			&=\langle w_n-w,\phi_n\rangle_{\gamma}-\int_{\Rn}\frac{|w_n-w|^{2^*(s)-2}(w_n-w)\phi_n}{|x|^s}\;{\rm d}x\no\\
			&= \langle w_n,\phi_n\rangle_{\gamma}-\int_{\Rn}\frac{|w_n|^{2^*(s)-2}w_n\phi_n}{|x|^s}\;{\rm d}x-\bigg(\langle w,\phi_n\rangle_{\gamma} -\int_{\Rn}\frac{|w|^{2^*(s)-2}w\phi_n}{|x|^s}\;{\rm d}x\bigg)\\
			&+\int_{\Rn}\!\!\!\bigg(\frac{|w_n|^{2^*(s)-2}w_n-|w|^{2^*(s)-2}w-|w_n-w|^{2^*(s)-2}(w_n-w)}{|x|^s}\bigg)\phi_n {\rm d}x\no\\
			&=\langle v_n,\phi\rangle_{\gamma}-\int_{\Rn}\frac{|v_n|^{2^*(s)-2}v_n\phi}{|x|^s}\;{\rm d}x-\prescript{}{\dot{H}^{-1}}{\big\langle} I_{\ga,s}{'}(w), \phi_n{\big\rangle}_{\dot H^1}+I^1_n\no\\
			&= \prescript{}{\dot{H}^{-1}}{\big\langle}I'_{\ga,s}(v_n), \phi{\big\rangle}_{\dot H^1}-0+I^1_n=o(1)+I^1_n.\no
		\end{align}
		
		Now, we aim to show that  $$I^1_n:=\int_{\Rn}\bigg\{\frac{|w_n|^{2^*(s)-2}w_n-|w|^{2^*(s)-2}w-|w_n-w|^{2^*(s)-2}(w_n-w)}{|x|^s}\bigg\}\phi_n {\rm d}x=o(1).$$
		Indeed, this follows as in the proof of \eqref{25-5-8}, since $\lim_{n\to\infty}\int_{\Rn}\frac{|\phi_n|^{2^*(s)}}{|x|^s}\;{\rm d}x=\int_{\Rn}\frac{|\phi|^{2^*(s)}}{|x|^s}\;{\rm d}x<\infty$.
		Hence, from \eqref{DIz} we conclude the proof of Claim 2.
		
		\medskip
		
		\noindent\underline{\bf Step 6:}  Assume that $w= 0$. We will show that this case can not occur unless $s=0$.
		\vspace{2mm}
		
		Let $\va\in C^\infty_c\big(B_1\big)$, with $0\leq\va\leq 1$.
		Using Rellich-Kondrachov's compactness theorem we have, $w_n\to 0$ in $L^2_{loc}(\Rn)$. Thus, we have,
		
		\bea
		\int_{\Rn}|\nabla (\va w_n)|^2\,{\rm d}x &=& \int_{\Rn}w_n^2 |\nabla \va|^2\,{\rm d}x + \int_{\Rn} \nabla w_n\cdot (\va^2\nabla w_n+2w_n \va\nabla \va)\,{\rm d}x\no\\
		&=&\int_{\Rn} w_n^2 |\nabla \va|^2\,{\rm d}x + \int_{\Rn} \nabla w_n\cdot \nabla(\va^2w_n)\,{\rm d}x\no\\
		&=&\prescript{}{\dot{H}^{-1}}{\big\langle}I'_{\ga,s}(w_n),\va^2w_n{\big\rangle}_{\dot{H}^1} + \ga\int_{\Rn}\frac{|w_n|^2\va^2}{|x|^2}\,{\rm d}x+\int_{\Rn}\frac{|w_n|^{2^*(s)}\va^2}{|x|^s}\,{\rm d}x +o(1)\no\\
		&\leq& \|I'_{\ga,s}(w_n)\|_{\dot{H}^{-1}(\Rn)}\|\va^2w_n\|_{\dot{H}^1(\Rn)}+ \frac{4\ga}{(N-2)^2}\int_{\Rn}|\nabla(\va w_n)|^2\,{\rm d}x\no\\
		&&\qquad\qquad +\mu_{\ga,s}(\Rn)^{-1}\left(\int_{\Rn}\frac{|w_n|^{2^*(s)}}{|x|^s}\,{\rm d}x\right)^{\tfrac{2-s}{N-s}}\left(\int_{\Rn}|\nabla(\va w_n)|^2\,{\rm d}x\right)+o(1)\no\\
		&\leq&(1-\tau)\int_{\Rn}|\nabla (\va w_n)|^2\,{\rm d}x+o(1),\quad\text{for some $\tau>0$, by the choice of }\delta.\no
		\eea
		
		In particular, from the Hardy-Sobolev inequality, we deduce that, for each $0<r<1,$
		
		$$\int_{B_r}\frac{|w_n|^{2^*(s)}}{|x|^s}\,{\rm d}x\to 0,\,\,\text{as }n\to\infty.$$
		
		But this contradicts \eqref{PS3} when $s>0$. Therefore, $w=0$ can not happen in our case $s\in (0,2).$

		\medspace
		In both cases, starting from a $(PS)$ sequence $\{v_n\}_n$ of $I_{\ga,s}$ we have singled out another $(PS)$ sequence $\{z_n\}_n$ at a strictly lower level, with a fixed minimum amount of decrease. Therefore, arguing recursively and using the fact that $\sup_{n\in\mathbb{N}}\|u_n\|_{\ga}<\infty,$ we infer that the process should terminate after finite steps and the last $(PS)$ should converge strongly to $0$ in $\dot{H}^1(\Rn)$. This completes the proof.
	\end{proof}
	\medspace
    As a consequence of the above Theorem, we have the following qualitative stability result:
	\begin{theorem}\label{SDHS}
		Let $N\geq 3$ and $\nu\in\mathbb{N}$. Let $\{u_k\}_{k\in\mathbb{N}}\subseteq \dot{H}^1(\Rn)$ be a sequence of nonnegative functions such that $(\nu-\frac{1}{2})\mu_{\ga,s}(\Rn)^{\tfrac{N-s}{2-s}}\leq \|u_k\|_{\ga}^2\leq (\nu+\frac{1}{2})\mu_{\ga,s}(\Rn)^{\frac{N-s}{2-s}}$ with $\mu_{\ga,s}(\Rn)$ defined as in \eqref{BCHSI} and assume that $\Ga(u_k)\to 0$ as $k\to\infty.$
		
		Then there exists a sequence $(\la_k^1,\cdots,\la_k^\nu)_{k\in\mathbb{N}}$ of $\nu-$tuples of positive real numbers such that 
		$$\bigg{\|}u_k-\sum_{i=1}^{\nu}U_{\ga,s}^{\la_k^i}\bigg{\|}_{\ga}\to 0\quad \text{as }k\to\infty.$$
	\end{theorem}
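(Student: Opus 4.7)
The plan is to reduce Theorem \ref{SDHS} to the Palais-Smale profile decomposition in Theorem \ref{PDHSE} by showing that the hypotheses on $\{u_k\}$ put it at the exact level of $\nu$ bubbles, then to use Chou-Chu's classification of nonnegative solutions to identify the weak limit (if nonzero) as one of those bubbles.

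First, I verify that $\{u_k\}$ is a Palais-Smale sequence. By definition $\Ga(u_k)=\|I'_{\ga,s}(u_k)\|_{H^{-1}(\Rn)}\to 0$, and combined with the uniform bound on $\|u_k\|_{\ga}$, pairing $I'_{\ga,s}(u_k)$ with $u_k$ yields
\begin{equation*}
\|u_k\|_{\ga}^2 = \int_{\Rn}\frac{|u_k|^{2^*(s)}}{|x|^s}\,{\rm d}x + o(1).
\end{equation*}
Passing to a subsequence for which $\|u_k\|_{\ga}^2\to L$ with $L\in\[(\nu-\tfrac{1}{2})\mu_{\ga,s}^{(N-s)/(2-s)},\,(\nu+\tfrac{1}{2})\mu_{\ga,s}^{(N-s)/(2-s)}\]$, we obtain $I_{\ga,s}(u_k)\to \ba:=\tfrac{2-s}{2(N-s)}L$, so $\{u_k\}$ is Palais-Smale at level $\ba$.

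Second, Theorem \ref{PDHSE} produces $m\in\mathbb{N}_0$, scales $R_k^j$, and a solution $u_0\in\hd$ of \eqref{MainEq} with
\begin{equation*}
u_k=u_0+\sum_{j=1}^{m} U_{\ga,s}^{R_k^j}+o(1) \quad\text{in }\hd,\qquad \ba=I_{\ga,s}(u_0)+\sum_{j=1}^{m} I_{\ga,s}\bigl(U_{\ga,s}^{R_k^j}\bigr)+o(1).
\end{equation*}
Since $u_k\geq 0$ and $u_k\to u_0$ a.e., $u_0\geq 0$, and the Chou-Chu classification \cite{CC} forces either $u_0\equiv 0$ or $u_0=U_{\ga,s}^{\mu}$ for some $\mu>0$. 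Since $U_{\ga,s}$ solves \eqref{MainEq} we have $\|U_{\ga,s}\|_{\ga}^2=\int_{\Rn}U_{\ga,s}^{2^*(s)}/|x|^s\,{\rm d}x=\mu_{\ga,s}^{(N-s)/(2-s)}$, and scale invariance of $\|\cdot\|_{\ga}$ gives $I_{\ga,s}(U_{\ga,s}^{\la})=E^*:=\tfrac{2-s}{2(N-s)}\mu_{\ga,s}^{(N-s)/(2-s)}$ for every $\la>0$. Letting $m^*\in\{m,m+1\}$ denote the total bubble count (counting $u_0$ when it is non-zero), the energy identity reads $\ba=m^*E^*$.

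Comparing with $\ba=\tfrac{2-s}{2(N-s)}L$ gives $m^*=L/\mu_{\ga,s}^{(N-s)/(2-s)}$, which is an integer lying in $[\nu-\tfrac{1}{2},\nu+\tfrac{1}{2}]$; hence $m^*=\nu$. Defining $(\la_k^1,\dots,\la_k^\nu)$ as $(R_k^1,\dots,R_k^m)$ appended with the constant scale $\mu$ when $u_0=U_{\ga,s}^{\mu}$ yields the conclusion. The main subtlety is ensuring that at each stage of the inductive construction behind Theorem \ref{PDHSE} the rescaled weak limits remain nonnegative, so that Chou-Chu's classification may be invoked throughout and the extracted profiles are the positive bubbles $U_{\ga,s}^{R_k^j}$ rather than sign-changing solutions; this propagates from $u_k\geq 0$ through the rescalings $w_n(x)=R_n^{(N-2)/2}v_n(R_nx)$, which preserve pointwise signs.
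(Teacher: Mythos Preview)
Your approach matches the paper's, which simply states Theorem~\ref{SDHS} as an immediate consequence of Theorem~\ref{PDHSE} without further proof; your steps (Palais--Smale verification, energy counting, Chou--Chu classification of $u_0$) correctly fill in the details.

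There is, however, a genuine slip in your final sentence. You write that nonnegativity ``propagates from $u_k\geq 0$ through the rescalings $w_n(x)=R_n^{(N-2)/2}v_n(R_nx)$, which preserve pointwise signs.'' Rescalings do preserve signs, but $v_n=u_n-u_0$ is \emph{not} nonnegative once $u_0\neq 0$, so this does not yield $w\geq 0$. The correct argument is to rescale the original sequence: set $\tilde u_n(x):=R_n^{(N-2)/2}u_n(R_nx)\geq 0$, so that $\tilde u_n=R_n^{(N-2)/2}u_0(R_n\,\cdot)+w_n$. Since Theorem~\ref{PDHSE}(iii) gives $R_n\to 0$ or $R_n\to\infty$, the rescaled fixed profile $R_n^{(N-2)/2}u_0(R_n\,\cdot)$ converges weakly to $0$ in $\dot H^1(\Rn)$ (and a.e.\ away from the origin), whence $\tilde u_n\rightharpoonup w$ and therefore $w\geq 0$. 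This is what actually justifies invoking Chou--Chu at each inductive stage; the same reasoning applies at later steps after subtracting several bubbles. With this correction your proof is complete.
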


	\medskip

	\section{Stability for the equation}\label{SCFMHS}
	
		In this section, we investigate the validity of a sharp quantitative version of the stability of critical points for the functional whose Euler-Lagrange equation is the Hardy-Sobolev equation \eqref{MainEq}. Define the quantity 
	\be\label{delu}
	\Gamma (u) :=\Bigg{\|}\De u+\ga \frac{u}{|x|^{2}} + \frac{u^{2^*(s)-1}}{|x|^s}\Bigg{\|}_{\dot{H}^{-1}(\Rn)}. 
	\ee
	\begin{definition}[Interaction between bubbles]
		Let $U_{\ga,s}^1=U_{\ga,s}^{\la_1},\cdots ,\,U_{\ga,s}^{\nu}=U_{\ga,s}^{\la_{\nu}}$ be a family of bubbles. We say that this family is $\de$-interacting for some $\de>0$ if
		\be\label{ibb}
		\min_{1\leq i,j \leq \nu}\left(\frac{\la_i}{\la_j},\,\frac{\la_j}{\la_i}\right)\leq \de
		\ee
		If together with the family we also have some positive coefficients $\al_1,\cdots,\,\al_{\nu}\in\mathbb{R},$ we say that the family with the coefficients is $\de$-interacting if \eqref{ibb} holds and in addition we have
		$$\max_{1\leq i\leq \nu}|\al_i -1|\leq \de$$
	\end{definition}
	\begin{theorem}
		Given $N\geq 3$, there exists $C_0(N,\ga,s)>0$ with the following property. Let $u\in C^{\infty}(\Rn\setminus\{0\})\cap \dot{H}^1(\Rn)$ be a nonnegative function satisfying
		$$ \frac{1}{2}\mu_{\ga,s}(\Rn)^{\tfrac{N-s}{2-s}}\leq \|u\|_{\ga}^2 \leq \frac{3}{2}\mu_{\ga,s}(\Rn)^{\tfrac{N-s}{2-s}}.$$
		Then there exists $\la>0$ such that
		$$u = U_{\ga,s}^{\la}+\rho$$
		where $$\|\rho\|_{\ga}\lesssim_{N,\ga,s}\Gamma (u).$$
	\end{theorem}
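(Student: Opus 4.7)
The plan splits into two regimes. When $\Gamma(u) \geq \delta_0$ for some fixed $\delta_0 > 0$, the hypothesis $\|u\|_\gamma \leq \tfrac{3}{2}\mu_{\gamma,s}^{(N-s)/(2-s)}$ gives $\|u - U_{\gamma,s}\|_\gamma \leq C \leq (C/\delta_0)\Gamma(u)$ and the claim is trivial with $\lambda = 1$. So I focus on $\Gamma(u) \leq \delta_0$, with $\delta_0$ to be chosen small. In this regime a standard contradiction/compactness argument combined with the one-bubble qualitative decomposition provided by Theorem \ref{SDHS} with $\nu = 1$ guarantees that $\dist_{\dot{H}^1}(u, \mathcal{M})$ can be made arbitrarily small. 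Choose $(\alpha, \lambda) \in \R_+ \times \R_+$ realizing $\inf_{(c, \mu)} \|u - c\,U_{\gamma,s}^\mu\|_\gamma$; existence follows from the compactness argument already used in the proof of Lemma~\ref{L2.1}. Setting $U := U_{\gamma,s}^\lambda$ and $\rho := u - \alpha U$, first-order optimality in $c$ and $\mu$ forces $\rho \perp_\gamma U$ and $\rho \perp_\gamma \tfrac{d}{d\mu}U_{\gamma,s}^\mu \big|_{\mu = \lambda}$, so $\rho$ is orthogonal to the full tangent space $T_{\alpha U}\mathcal{M}$; moreover $\alpha \to 1$ and $\|\rho\|_\gamma \to 0$ as $\Gamma(u) \to 0$.

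Using \eqref{MainEq} for $U$ and Taylor-expanding about $\alpha U$, the key identity is
\[
-\Delta u - \tfrac{\gamma}{|x|^2}u - \tfrac{u^{2^*(s)-1}}{|x|^s} = -\Delta \rho - \tfrac{\gamma}{|x|^2}\rho - (2^*(s)-1)\tfrac{(\alpha U)^{2^*(s)-2}}{|x|^s}\rho + (\alpha - \alpha^{2^*(s)-1})\tfrac{U^{2^*(s)-1}}{|x|^s} - \tfrac{\mathcal{N}(\rho)}{|x|^s},
\]
where $\mathcal{N}(\rho) := (\alpha U + \rho)^{2^*(s)-1} - (\alpha U)^{2^*(s)-1} - (2^*(s)-1)(\alpha U)^{2^*(s)-2}\rho$ is the quadratic Taylor remainder. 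Pairing in the $\dot{H}^{-1}$--$\dot{H}^1$ duality against $\rho$ and using $\langle U, \rho\rangle_\gamma = 0$ to kill the spurious $\alpha$-term yields
\[
\|\rho\|_\gamma^2 - (2^*(s)-1)\alpha^{2^*(s)-2}\!\int_{\Rn}\!\tfrac{U^{2^*(s)-2}}{|x|^s}\rho^2\,{\rm d}x = O\bigl(\Gamma(u)\|\rho\|_\gamma\bigr) + \int_{\Rn}\tfrac{\mathcal{N}(\rho)\,\rho}{|x|^s}\,{\rm d}x.
\]

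The decisive input is the strict spectral gap $\eta_3 > \eta_2 = 2^*(s) - 1$ for the weighted operator $\mathcal{L}_{\gamma,\lambda}$ defined in \eqref{Lgar}. Since $\rho$ lies in the orthogonal complement of the first two eigenspaces, the Rayleigh characterization \eqref{3ev} delivers $\int_{\Rn} U^{2^*(s)-2}\rho^2/|x|^s\,{\rm d}x \leq \|\rho\|_\gamma^2/\eta_3$ and hence the coercivity
\[
\|\rho\|_\gamma^2 - (2^*(s)-1)\!\int_{\Rn}\!\tfrac{U^{2^*(s)-2}}{|x|^s}\rho^2\,{\rm d}x \geq \Bigl(1 - \tfrac{\eta_2}{\eta_3}\Bigr)\|\rho\|_\gamma^2.
\]
With $|\alpha - 1| = o(1)$, this upgrades to the same coercivity with the $(\alpha U)^{2^*(s)-2}$ weight up to a negligible perturbation absorbed into the left-hand side.

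The main technical hurdle is the estimate $\int \mathcal{N}(\rho)\rho/|x|^s = o(\|\rho\|_\gamma^2)$ as $\|\rho\|_\gamma \to 0$. In the regime $2^*(s) - 1 \geq 2$ (equivalently $N \leq 6 - 2s$), a standard Taylor bound $|\mathcal{N}(\rho)| \lesssim U^{2^*(s)-3}\rho^2 + |\rho|^{2^*(s)-1}$ together with H\"older and the Hardy-Sobolev inequality \eqref{HSI} reduces the integral to $O(\|\rho\|_\gamma^3 + \|\rho\|_\gamma^{2^*(s)}) = o(\|\rho\|_\gamma^2)$. In the regime $2^*(s) - 1 \in (1, 2)$, the convexity inequality $|(a+b)^p - a^p - p\,a^{p-1}b| \lesssim |b|^p$ gives $|\mathcal{N}(\rho)| \lesssim |\rho|^{2^*(s)-1}$ directly, and another application of \eqref{HSI} yields the same $o(\|\rho\|_\gamma^2)$ bound. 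Absorbing, we obtain $\|\rho\|_\gamma \lesssim \Gamma(u)$. Finally, testing the main identity against $U$ instead of $\rho$ and again using $\langle U, \rho\rangle_\gamma = 0$ isolates $(\alpha - \alpha^{2^*(s)-1})\|U\|_{L^{2^*(s)}(\Rn, |x|^{-s})}^{2^*(s)}$ on the left, producing $|\alpha - 1| \lesssim \Gamma(u)$. Setting $\tilde\rho := (\alpha - 1)U + \rho$ gives $u = U_{\gamma,s}^\lambda + \tilde\rho$ with $\|\tilde\rho\|_\gamma \lesssim \Gamma(u)$, as claimed.
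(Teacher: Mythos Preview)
Your proof is correct and follows essentially the same route as the paper: reduce to small $\Gamma(u)$ via Theorem~\ref{SDHS}, project onto $\mathcal{M}$ to obtain the orthogonality conditions, pair the equation against $\rho$ and combine the spectral gap $\eta_3>\eta_2$ with pointwise Taylor bounds on the nonlinearity (splitting the cases $2^*(s)-1\ge 2$ and $2^*(s)-1<2$) to get $\|\rho\|_\gamma\lesssim\Gamma(u)$, and finally control $|\alpha-1|$. Your final step---testing the identity against $U$ rather than against $u$---is a slightly cleaner variant of the paper's Step~II, but the argument is otherwise the same.
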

	
	\begin{proof}
			
		{\bf Step I:} First we approximate $u$ not only with one bubble but a scalar multiple of the bubble, namely we consider the manifold
		$$\tM:=\left\{\al U_{\ga,s}^\la\,: U_{\ga,s}^{\la}(x)=\la^{\tfrac{N-2}{2}}U_{\ga,s}(\la x),\,\la>0,\,\al\in\R\setminus\{0\}\right\}.$$
		Later we shall recover the result for one bubble.
		
		Up to enlarging the constant $C_0>0$ we can assume that $\Ga(u):=\|\De u+\frac{\ga}{|x|^2}u+\frac{u^{2^*(s)-1}}{|x|^s}\|_{\dot{H}^{-1}}\leq \delta_0$. 
		By Theorem \ref{SDHS} for a constant $\varepsilon_0>0$ (to be fixed later on), we can choose $\delta_0>0$ depending on $\varepsilon_0$ in such a way that there exists $(\al,\la)\in \mathbb{R}\times \mathbb{R}_{+}$ such that
		\bea
		\|u-\al U_{\ga,s}^{\la}\|_{\ga}&\leq& \varepsilon_0,\label{STOB1}\\
		|\al-1|&\leq& \varepsilon_0\label{STOB2}.
		\eea
		Without loss of generality, we may choose $(\al,r)\in\mathbb{R}\times\mathbb{R}_{+}$ such that $\sigma:=\al U_{\ga,s}^{\la}$ be the bubble closest to $u$ in the $\|\cdot\|_{\ga}$-norm, that is,
		\be\label{OPM}
		\|u-\sigma\|_{\ga} = \min_{\(\tilde{\al},\tilde{\la}\)\in\R\times \R_{+}}\|u-\tilde{\al}U_{\ga,s}^{\tilde{\la}}\|_{\ga}.
		\ee
		
		Let $\rho :=u-\sigma$ be the difference between the function $u$ and the orthogonal projection of $u$ onto $\tM$, namely, $\sigma$. Therefore, from \eqref{STOB1} we have, 
		\be\label{STOB3}
		\|\rho\|_{\ga}\leq \varepsilon_0.
		\ee
		\medspace
		Since $\sigma\in\dot{H}^1(\Rn)$ minimizes the $\dot{H}^1$-distance (with respect to $\|\cdot\|_{\ga}$-norm) from u to $\tM$,  $\rho$ is $\dot{H}^1$-orthogonal to $T_{\sigma}\tM$. That is,
		\bea
		\int_{\Rn}\nabla \rho\cdot\nabla U_{\ga,s}^{\la}\,{\rm d}x-\ga\int_{\Rn}\frac{\rho U_{\ga,s}^{\la}}{|x|^2}\,{\rm d}x&=&0\label{OR1}\\
		\int_{\Rn}\nabla \rho\cdot\nabla V\,{\rm d}x-\ga\int_{\Rn}\frac{\rho V}{|x|^2}\,{\rm d}x&=&0\label{OR2}
		\eea
		where $V=\frac{{\rm d}}{{\rm d}t}U_{\ga,s}^{t}\bigg{|}_{t=\la}$.
		Since the functions $U_{\ga,s}^{\la},\,V=\frac{{\rm d}}{{\rm d}t}U_{\ga,s}^t\bigg{|}_{t=\la}$ are eigenfunctions for $\mathcal{L}_{\ga,s,\la}:= \left(\frac{-\De-\frac{\ga}{|x|^2}}{\tfrac{\big(U_{\ga,s}^{\la}\big)^{2^*(s)-2}}{|x|^s}}\right)$ (see Lemma \ref{STLgar}),the above-mentioned orthogonality conditions are equivalent to 
		\bea
		\int_{\Rn}\rho \frac{\big(U_{\ga,s}^{\la}\big)^{2^*(s)-1}}{|x|^s}\,{\rm d}x &=& 0\label{OR3}\\
		\int_{\Rn}\rho V \frac{\big(U_{\ga,s}^{\la}\big)^{2^*(s)-2}}{|x|^s}\,{\rm d}x &=& 0\label{OR4}
		\eea
		Our main target is to prove that $\|\rho\|_{\ga}$ is controlled by $\Ga(u)$. To achieve our goal we start by testing $\De u +\frac{\ga}{|x|^2}u+\frac{u^{2^*(s)-1}}{|x|^s}$ against $\rho$: exploiting the orthogonal conditions \eqref{OR1}-\eqref{OR4} we get,
		\bea
		\|\rho\|_{\ga}^2 = \langle u,\rho\rangle_{\ga} &=& \int_{\Rn}\rho \frac{u^{2^*(s)-1}}{|x|^s}\,{\rm d}x - \int_{\Rn}\rho \left(\De u +\frac{\ga}{|x|^2}u+\frac{u^{2^*(s)-1}}{|x|^s}\right)\,{\rm d}x\no\\
		&\leq& \int_{\Rn}\rho \frac{u^{2^*(s)-1}}{|x|^s}\,{\rm d}x + \|\rho\|_{\ga}\bigg{\|}\left(\De u +\frac{\ga}{|x|^2}u+\frac{u^{2^*(s)-1}}{|x|^s}\right)\bigg{\|}_{\dot{H}^{-1}}\label{OBC}
		\eea
		To control the first term in the RHS of the above integral we consider the following estimate
		\be\label{EE}
		\bigg{|}(a+b)|a+b|^{p-1}+a|a|^{p-1}\bigg{|}\leq p|a|^{p-1}|b|+C_N\left(|a|^{p-2}|b|^2+|b|^p\right)
		\ee
		The above inequality holds without the term $|a|^{p-2}|b|^2$ whenever $p-2<0$.
		We put $p=2^*(s)-1=\frac{N+2-2s}{N-2},\,a=\frac{\sigma}{|x|^{\tfrac{s}{p}}},\,b:=\frac{\rho}{|x|^{\tfrac{s}{p}}},$. Then, $p-2\leq 0$ for every $N\geq 6$. In order to get $p>2$, we must have, $N<6-2s$. Therefore, $p>2$ can happen for $N=3,\,4,\,5$ (depending on $s\in (0,2]$).
		\medspace
		
		Thus we get, for every $N\geq 6$, 
		\be\no
		\bigg{|}\frac{u^{p}}{|x|^s}- \frac{\sigma^{p}}{|x|^s}\bigg{|}\leq p \frac{\sigma^{p-1}\rho}{|x|^s}+C_N \frac{|\rho|^p}{|x|^s}
		\ee
		Therefore, recalling \eqref{STOB1} and using H\"older's inequality together with Hardy-Sobolev inequality we get
		\bea
		\int_{\Rn}\frac{u^{2^*(s)-1}\rho}{|x|^s}\,{\rm d}x &\leq& p\int_{\Rn}\frac{|\sigma|^{2^*(s)-2}\rho^2}{|x|^s}\,{\rm d}x + C_N \int_{\Rn}\frac{|\rho|^{2^*(s)}}{|x|^s}\,{\rm d}x\label{STOB4}
		\eea
		By Spectral analysis argument for the operator $\mathcal{L}_{\ga,s,\la}$ (see Lemma \ref{Lgar}) and using the fact that $\rho\perp T\tM_{U_{\ga,s}^{\la}}$ we get, 
		\be\no
		\|\rho\|_{\ga}^2\geq \Lambda \int_{\Rn}\frac{\big(U_{\ga,s}^{\la}\big)^{2^*(s)-2}\rho^2}{|x|^s}\,{\rm d}x,
		\ee
		where $\Lambda(N) >2^*(s)-1$.
		Thus,
		\be\label{STOB5}
		\int_{\Rn}\frac{|\sigma|^{2^*(s)-2}\rho^2}{|x|^s}\,{\rm d}x = |\al|^{2^*(s)-2}\int_{\Rn}\frac{|U_{\ga,s}^{\la}|^{2^*(s)-2}\rho^2}{|x|^s}\,{\rm d}x\leq \frac{|\al|^{2^*(s)-2}}{\Lambda}\|\rho\|_{\ga}^2.
		\ee
		Using this, from \eqref{STOB4} we get,
		\be\label{STOB6}
		\int_{\Rn}\frac{u^{2^*(s)-1}\rho}{|x|^s}\,{\rm d}x \leq \frac{|\al|^{2^*(s)-2}p}{\Lambda}\|\rho\|_{\ga}^2+C_N\mu_{\ga,s}(\Rn)^{-\tfrac{2^*(s)}{2}}\|\rho\|_{\ga}^{2^*(s)}
		\ee
		Hence recalling \eqref{OBC} we get,
		\be\no
		\left(1-\frac{|\al|^{2^*(s)-2}p}{\Lambda}\right)\|\rho\|_{\ga}^2\leq C_{N,\ga}\|\rho\|_{\ga}^{2^*(s)}+\|\rho\|_{\ga}\Gamma (u).
		\ee
		Since, we may assume that, $\|\rho\|_{\ga}\ll 1,$ we can say that the last inequality implies (for $N\geq 6$)
		\be\label{OBC1}
		\|\rho\|_{\ga} \preceq \Ga (u).
		\ee
		Whenever $p>2$ \big{(}can happen for $N=3,\,4,\,5$, depending upon $s\in (0,2]$\big{)}, we get,
		\be\no
		\bigg{|}\frac{u^{p}}{|x|^s}- \frac{\sigma^{p}}{|x|^s}\bigg{|}\leq p \frac{\sigma^{p-1}\rho}{|x|^s}+C_N \left(\frac{|\rho|^p}{|x|^s}+\frac{|\sigma|^{p-2}\rho^2}{|x|^s}\right).
		\ee
		Now, since $p>2$ using H\"older's inequality and Hardy-Sobolev inequality we must have,
		\be\no
		\int_{\Rn}\frac{|\sigma|^{2^*(s)-3}\rho^3}{|x|^s}\,{\rm d}x \leq \left(\int_{\Rn}\frac{|\sigma|^{2^*(s)}}{|x|^s}\,{\rm d}x \right)^{\tfrac{2^*(s)-3}{2^*(s)}}\left(\int_{\Rn}\frac{|\rho|^{2^*(s)}}{|x|^s}\,{\rm d}x \right)^{\tfrac{3}{2^*(s)}}\lesssim_{N,\ga,s} \|\rho\|_{\ga}^{3}.
		\ee
		Therefore, following similar arguments in this case, we get, 
		\be\no
		\|\rho\|_{\ga}\lesssim_{N,\ga,s} \Ga(u).
		\ee
		
		\medspace
		
		{\bf Step II:} We now quantitatively control $|\al-1|$. By our assumption, $\Ga(u):=\|\De u+\frac{\ga}{|x|^2}u+\frac{u^{2^*(s)-1}}{|x|^s}\|_{\dot{H}^{-1}}\leq \delta_0$. Thus using Taylor series expansion and \eqref{OR1} we have,
		\bea
		&\,&\prescript{}{\dot{H}^{-1}}{\langle}\De u+\frac{\ga}{|x|^2}u+\frac{u^{2^*(s)-1}}{|x|^s},u{\rangle}_{\dot{H}^1} \leq \Ga(u)\|u\|_{\ga}\leq \de_0\|u\|_{\ga}\no\\
		&&\implies \int_{\Rn}\frac{u^{2^*(s)}}{|x|^s}\,{\rm d}x \leq \de_0\|u\|_{\ga}+\|u\|_{\ga}^2\no\\
		&&\implies \int_{\Rn}\frac{(\sigma+\rho)^{2^*(s)}}{|x|^s}\,{\rm d}x \leq \|\sigma+\rho\|_{\ga}^2+C(\ga,s,N)\de_0^2\no\\
		&&\implies \al^{2^*(s)}\int_{\Rn}\frac{|U_{\ga,s}^{\la}|^{2^*(s)}}{|x|^s}\,{\rm d}x +p\int_{\Rn}\frac{\sigma^{2^*(s)-1}\rho}{|x|^s}\,{\rm d}x + O(\Ga^2)\leq \left[\al^2\|U_{\ga,s}^{\la}\|_{\ga}^2+\|\rho\|_{\ga}^2\right]+C(\ga,s,N)\de_0\no\\
		&&\implies \al^{2^*(s)}\mu_{\ga,s}(\Rn)^{\tfrac{N-s}{2-s}}\leq \al^2\mu_{\ga,s}(\Rn)^{\tfrac{N-s}{2-s}}+O(\Ga^2)+C(N,\ga,s)\de_0.\no
		\eea
		Hence, $|\al-1|\leq O(\Ga^2)$. Thus, if we set $\tilde{\rho}= \rho+(\al-1)U_{\ga,s}^{\la},$ we get our desired result,
		\be\no
		u= U_{\ga,s}^{\la}+\tilde{\rho},
		\ee
		where $\|\tilde{\rho}\|_{\ga}\leq \|\rho\|_{\ga}+|\al-1|\|U_{\ga,s}^{\la}\|_{\ga}\lesssim_{N,\ga,s} \Ga(u)$.
	\end{proof}
	
	\medskip

	\section{Appendix}\label{SAPP}

    \subsection{Nondegeneracy and spectral properties of the linearized operator}
	
	In this subsection, we collect a few important lemmas which will be needed for the proof related to the eigenvalues and eigenfunctions of the linearized operator $$\mathcal{L}_{\ga,s}:=\frac{\left(-\De-\frac{\ga}{|x|^2}\right)}{\tfrac{U_{\ga,s}^{2^*(s)-2}}{|x|^s}}.$$
	We know that if $U_{\ga,s}$ is of the form \eqref{HSBU}, where the constant $C_{N,\ga,s}$ is chosen so that $\|U_{\ga,s}\|_{\ga}=1$. Then for any $\la>0,$
	$$U_{\ga,s}^{\la}(x)= \la^{\tfrac{N-2}{2}}U_{\ga,s}(\la x)$$ 
	also solves \eqref{MainEq}. This fact indicates that the solution $U_{\ga,s}$ must degenerate. In other words, the kernel of the linearized operator contains non-trivial elements. In \cite{R}, the author shows that the degeneracy happens only along a 1-dimensional subspace characterized by
	\be\no
	Z_{\ga,s}:= \frac{\partial}{\partial\la}\bigg{|}_{\la=1}U_{\ga,s}^{\la} = \sum_{i}x^i\partial_{i}U_{\ga,s} + \tfrac{N-2}{2}U_{\ga,s} \in \dot{H}^1(\Rn).
	\ee
	$Z_{\ga,s}$ solves the eigenvalue problem 
	\be\label{LE}
	-\De \phi -\frac{\ga}{|x|^2}\phi = (2^*(s)-1)\frac{U_{\ga,s}^{2^*(s)-2}}{|x|^s}\phi\qquad\text{in }\dot{H}^1(\Rn),
	\ee
	and the degeneracy to the solution space to \eqref{LE} can occur only along the directions $Z_{\ga,s}$.
	
	\begin{theorem}[Theorem $0.1$, \cite{FR}]
		We assume that $\ga \geq 0$ and that $\ga + s>0.$ Then 
		\be\no
		K :=\left\{\phi \in \dot{H}^1(\Rn)\,:\,-\De \phi -\frac{\ga}{|x|^2}\phi = (2^*(s)-1)\frac{U_{\ga,s}^{2^*(s)-2}}{|x|^s}\phi\,\,\text{in }\dot{H}^1(\Rn)\right\} = \mathbb{R}Z_{\ga,s}.
		\ee
		
		In other words, the kernel of $\left(-\De-\tfrac{\ga}{|x|^2}-(2^*(s)-1)\frac{U_{\ga,s}^{2^*(s)-2}}{|x|^s}\right)$ is one dimensional.
	\end{theorem}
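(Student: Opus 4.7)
The plan is to exploit the full $O(N)$-symmetry of the linearized operator $L_{\gamma,s} := -\Delta - \gamma/|x|^2 - (2^*(s)-1)U_{\gamma,s}^{2^*(s)-2}/|x|^s$, which commutes with rotations since both $U_{\gamma,s}$ and the Hardy weight are radial. First I would decompose any $\phi\in K$ via spherical harmonics as $\phi(x)=\sum_{k\geq 0}\phi_k(r)Y_k(\omega)$, where $Y_k$ are orthonormal spherical harmonic eigenfunctions of $-\Delta_{\mathbb{S}^{N-1}}$ with eigenvalue $k(k+N-2)$. Orthogonality decouples the kernel equation into the family of radial ODEs
\begin{equation*}
-\phi_k''-\frac{N-1}{r}\phi_k'+\frac{k(k+N-2)-\gamma}{r^2}\phi_k = (2^*(s)-1)\frac{U_{\gamma,s}^{2^*(s)-2}(r)}{r^s}\phi_k,
\end{equation*}
each posed on $(0,\infty)$ with the $\dot{H}^1$-condition translating into $L^2((0,\infty),r^{N-1}\,dr)$-integrability at both endpoints after multiplication by appropriate weights.

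Next I would apply the Emden-Fowler change of variables $r=e^{-t}$ and $\psi_k(t)=e^{-\frac{N-2}{2}t}\phi_k(e^{-t})$, under which each radial ODE becomes an autonomous one-dimensional Schr\"odinger equation on $\mathbb{R}$:
\begin{equation*}
-\psi_k''+\left(\tfrac{(N-2)^2}{4}+k(k+N-2)-\gamma\right)\psi_k = W(t)\psi_k,
\end{equation*}
where $W(t)$ is an explicit positive P\"oschl-Teller-type potential built from the closed form \eqref{HSBU} of $U_{\gamma,s}$, and decays exponentially as $|t|\to\infty$. The $\dot{H}^1$-condition translates into $H^1(\mathbb{R})$ on the cylinder side, so the problem reduces to determining, for each $k\geq 0$, whether the number $\tfrac{(N-2)^2}{4}+k(k+N-2)-\gamma$ is a bound-state eigenvalue of $-\partial_t^2 - W(t)$.

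For the $k=0$ sector I would verify that $Z_{\gamma,s}$, which lies in the kernel by construction since it generates the one-parameter dilation family $\lambda\mapsto U_{\gamma,s}^{\lambda}$, provides the full one-dimensional solution space: since the radial ODE is second-order, the solution space is at most two-dimensional, and a direct asymptotic analysis at $r=0$ and $r=\infty$ (using the two Frobenius exponents $\beta_{\pm}(\gamma)$) rules out the second linearly independent solution on $\dot{H}^1$-grounds. For $k\geq 1$, the additional positive shift $k(k+N-2)$ pushes the constant coefficient strictly above the value attained in the $k=0$ case, and the claim becomes that $0$ is no longer an eigenvalue of the Schr\"odinger operator on the line. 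The main obstacle is making this step rigorous: it requires sharp control of the discrete spectrum of $-\partial_t^2-W(t)$. I would approach it by identifying $W$ with a Pöschl-Teller potential whose eigenvalues and eigenfunctions admit closed forms, and then check that the explicit $k=0$ eigenvalue is simple and corresponds to $\psi_0$ coming from $Z_{\gamma,s}$, while for $k\geq 1$ the shifted parameter lies strictly above all discrete eigenvalues. Monotonicity of the principal eigenvalue with respect to the added positive shift, combined with the hypothesis $\gamma\geq 0$ and $\gamma+s>0$ which precisely excludes the translation-invariant case where $k=1$ modes would contribute, would close the argument.
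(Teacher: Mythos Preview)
The paper does not prove this statement: it is quoted as Theorem~0.1 of \cite{FR} and no argument is supplied here. There is therefore no proof in the paper against which to compare your attempt.

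That said, your sketch follows the standard route to such nondegeneracy results and is close in spirit to what is done in \cite{FR}: decompose in spherical harmonics, pass to the Emden--Fowler variable so that each angular mode yields a one-dimensional Schr\"odinger equation on $\mathbb{R}$ with an exactly solvable (P\"oschl--Teller type) potential, and then check mode by mode which values of the shift parameter correspond to bound states. The present paper does carry out a separation-of-variables analysis of this kind in the proof of Lemma~\ref{STLgar}, but only in order to identify the first two eigenvalues and eigenspaces of $\mathcal{L}_{\gamma,s}$; the one-dimensionality of the kernel is taken as input from the cited theorem rather than re-derived.

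One point in your outline deserves emphasis. The monotonicity-in-$k$ argument disposes of $k\geq 2$ without difficulty, but $k=1$ is genuinely borderline: in the limiting case $\gamma=s=0$ the $k=1$ modes \emph{do} lie in the kernel (they are the translation directions), so the hypothesis $\gamma+s>0$ is exactly what separates the $k=1$ eigenvalue from the kernel level. You note this correctly, but ruling out $k=1$ is the substantive step and requires the explicit P\"oschl--Teller spectrum (or an equivalent sharp comparison), not just soft monotonicity; a proof that glosses over this would be incomplete.
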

	As a result, we get complete information on the first and second eigenvalues and the corresponding eigenspaces of the operator $\mathcal{L}_{\ga,s}$.
	
	
	We now recall the following results due to Glaudo-Figalli \cite{FG}:
	
	\begin{proposition}[\cite{FG}, Proposition A.1]
		Given a positive integer $n\in\mathbb{N},$ let $1\leq p<N$ and $q<p^*$ be two real numbers. For any positive weight $w\in L^{\left(\tfrac{p^*}{q}\right)'}(\Rn)$, the following compact embedding holds:
		\be\no
		\dot{W}^{1,p}(\Rn)\xhookrightarrow[cpt]{}L^q_w(\Rn).
		\ee
	\end{proposition}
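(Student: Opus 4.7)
\smallskip

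\textbf{Overall strategy.} The plan is to establish continuity of the embedding by combining H\"older's inequality with the classical Sobolev embedding $\dot{W}^{1,p}(\R^N)\hookrightarrow L^{p^*}(\R^N)$, and then upgrade to compactness through a three-region splitting argument (tail at infinity, sublevel set of the weight, superlevel set of the weight) that reduces the problem to the standard Rellich--Kondrachov theorem on a bounded domain. The crucial input making everything work is that the integrability assumption $w\in L^{(p^*/q)'}(\R^N)$ provides a single mechanism that simultaneously kills both the large-$|x|$ tail and the region where $w$ is concentrated.

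\textbf{Continuity and extraction.} For any $f\in\dot{W}^{1,p}(\R^N)$ and any measurable set $E\subseteq\R^N$, H\"older with exponents $p^*/q$ and $(p^*/q)'$ gives
\[
\int_{E}|f|^q w\,{\rm d}x \leq \left(\int_{\R^N}|f|^{p^*}{\rm d}x\right)^{q/p^*}\left(\int_{E} w^{(p^*/q)'}{\rm d}x\right)^{1/(p^*/q)'},
\]
which combined with the Sobolev embedding already yields continuity when $E=\R^N$. Now given a bounded sequence $\{f_n\}\subset\dot{W}^{1,p}(\R^N)$, I would extract a subsequence, still denoted $\{f_n\}$, such that $f_n\rightharpoonup f$ weakly in $\dot{W}^{1,p}(\R^N)$, and such that $f_n\to f$ strongly in $L^r_{loc}(\R^N)$ for every $r<p^*$ (via Rellich--Kondrachov on each ball $B_R$ combined with a standard diagonal argument), and in particular a.e.\ in $\R^N$.

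\textbf{The three-region argument.} Fix $\varepsilon>0$. Using $w\in L^{(p^*/q)'}(\R^N)$, choose $R>0$ so that $\|w\|_{L^{(p^*/q)'}(\R^N\setminus B_R)}<\varepsilon$, and then choose a threshold $M>0$ so that $\|w\|_{L^{(p^*/q)'}(\{w>M\}\cap B_R)}<\varepsilon$ (possible by absolute continuity of the integral). On $\R^N\setminus B_R$ and on $\{w>M\}\cap B_R$, the displayed H\"older estimate controls $\int|f_n-f|^q w\,{\rm d}x$ by the bounded quantity $\|f_n-f\|_{L^{p^*}}^{q}$ times the small weighted factor. On the remaining piece $\{w\leq M\}\cap B_R$ we have the brutal bound
\[
\int_{\{w\leq M\}\cap B_R}|f_n-f|^q w\,{\rm d}x \leq M\int_{B_R}|f_n-f|^q\,{\rm d}x,
\]
which tends to zero as $n\to\infty$ by the Rellich--Kondrachov strong convergence in $L^q(B_R)$ (recall $q<p^*$). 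Letting $n\to\infty$ first and then $\varepsilon\to 0$ gives $f_n\to f$ in $L^q_w(\R^N)$, which is the desired compactness.

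\textbf{Main obstacle.} No single step here is deep; the only delicate point is the bookkeeping of the three regions, in particular the choice of threshold $M$ in $\{w>M\}\cap B_R$, which requires invoking the absolute continuity of the integral of $w^{(p^*/q)'}$ on the finite-measure set $B_R$. Once that is set up cleanly, the argument is uniform in $n$ and terminates in one pass. An alternative, essentially equivalent, packaging would be to verify the hypotheses of Vitali's convergence theorem for the sequence $|f_n-f|^q w$, where uniform integrability follows from the H\"older bound together with the boundedness of $\{f_n\}$ in $L^{p^*}$, and tightness at infinity follows from the tail bound on $w$; this is exactly the same content in different language.
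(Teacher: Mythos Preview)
The paper does not give its own proof of this proposition; it merely quotes the statement from \cite{FG} (Proposition A.1) and uses it as a black box in the appendix. So there is no ``paper's proof'' to compare against.

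That said, your argument is correct and is in fact the standard route to this result: continuity via H\"older plus the Sobolev embedding, and compactness via a tail/threshold/compact-core splitting that reduces to Rellich--Kondrachov on a ball. One small remark: the weak convergence $f_n\rightharpoonup f$ in $\dot{W}^{1,p}(\R^N)$ you invoke is unproblematic for $p>1$ by reflexivity, but for $p=1$ (which the statement allows) it is not automatic; however your argument does not actually need it. What you use downstream is only (i) the local $L^q$ strong convergence from Rellich--Kondrachov on each $B_R$ and (ii) a uniform bound on $\|f_n-f\|_{L^{p^*}(\R^N)}$, and the latter follows once you know $f\in L^{p^*}$, which is immediate from Fatou and the a.e.\ convergence. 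With that cosmetic adjustment the proof goes through for all $1\le p<N$. Your closing remark that the whole thing can be repackaged as a Vitali argument is exactly right and is how the result is often presented.
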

	\begin{theorem}[\cite{FG}, Proposition A.2]
		For any $N\geq 3,$ and any positive weight $w\in L^{\tfrac{N}{2}}(\Rn)$, the inverse operator $\left(\frac{-\De}{w}\right)^{-1}$ is well-defined and continuous from $L^2_w(\Rn)$ into $\dot{H}^1(\Rn)$. Hence it is a compact self-adjoint operator from $L^2_w(\Rn)$ into itself.	
	\end{theorem}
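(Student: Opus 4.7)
The plan is to define $T := \left(\frac{-\De}{w}\right)^{-1}$ via the Riesz representation theorem on the Hilbert space $\hd$ endowed with the inner product $\langle u,\phi\rangle = \int_{\Rn}\nabla u\cdot\nabla\phi\,{\rm d}x$, and then to read off the three properties (well-definedness/continuity, compactness, and self-adjointness) in turn. The central input, supplied by the preceding Proposition A.1 of \cite{FG} with the choice $p=q=2$ (so that $(p^*/q)' = (2^*/2)' = N/2$, matching exactly the hypothesis $w\in L^{N/2}$), is the compact (hence continuous) embedding $\hd \hookrightarrow L^2_w(\Rn)$, with some constant $C_w>0$ depending on $\|w\|_{L^{N/2}}$.

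For well-definedness and continuity, given $v\in L^2_w(\Rn)$ I would define the linear functional $F_v(\phi):=\int_{\Rn} w v\phi\,{\rm d}x$ on $\hd$. Weighted Cauchy--Schwarz followed by the embedding gives $|F_v(\phi)|\leq \|v\|_{L^2_w}\|\phi\|_{L^2_w}\leq C_w\|v\|_{L^2_w}\|\phi\|_{\hd}$, so $F_v$ belongs to the dual of $\hd$. Riesz representation then produces a unique $u=Tv\in\hd$ with $\int_{\Rn}\nabla u\cdot\nabla\phi\,{\rm d}x = \int_{\Rn}wv\phi\,{\rm d}x$ for every $\phi\in\hd$, which is precisely the weak formulation of $-\De u = wv$, i.e.\ $-\De u / w = v$ $L^2_w$-a.e. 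Testing with $\phi=u$ and reapplying the embedding yields $\|Tv\|_{\hd}^2\leq C_w\|v\|_{L^2_w}\|Tv\|_{\hd}$, hence $\|T\|_{L^2_w\to\hd}\leq C_w$.

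For the statement on $L^2_w$, let $i:\hd\hookrightarrow L^2_w$ denote the compact embedding from Proposition A.1. The operator under consideration factors as $i\circ T:L^2_w\to L^2_w$, which is compact as the composition of a bounded and a compact operator. Self-adjointness comes from using the defining weak identity twice: for $u_1,u_2\in L^2_w$,
$$\langle iTu_1,u_2\rangle_{L^2_w} = \int_{\Rn}w(Tu_1)u_2\,{\rm d}x = \int_{\Rn}\nabla(Tu_1)\cdot\nabla(Tu_2)\,{\rm d}x = \int_{\Rn}wu_1(Tu_2)\,{\rm d}x = \langle u_1,iTu_2\rangle_{L^2_w}.$$

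There is no substantive obstacle here: the entire argument is driven by the Hilbert structure of $\hd$ plus the compact embedding $\hd\hookrightarrow L^2_w$ established in Proposition A.1, and the only mild bookkeeping is to verify that the conjugate exponent $(2^*/2)' = N/2$ matches the assumed integrability class of $w$, so that one may legitimately invoke that embedding.
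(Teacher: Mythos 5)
The paper does not prove this result; it is quoted verbatim as \cite[Proposition~A.2]{FG} and used as a black box, so there is no proof in the present paper to compare against. Your argument is correct and is the standard one: with $p=q=2$ in Proposition~A.1 one has $(p^*/q)'=(2^*/2)'=N/2$ (and the strict inequality $q<p^*$ holds for all $N\geq 3$), giving the compact embedding $\hd\hookrightarrow L^2_w$; well-definedness and the bound $\|T\|_{L^2_w\to\hd}\leq C_w$ then follow from Riesz representation on $\hd$ and testing the weak identity with $\phi=Tv$, compactness on $L^2_w$ is the factorization $i\circ T$ through the compact embedding, and self-adjointness drops out of applying the defining identity $\int\nabla(Tu)\cdot\nabla\phi=\int w u\phi$ twice with the roles of $u_1,u_2$ swapped. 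This is precisely the argument in \cite{FG}, so there is nothing to add.
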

	
	Since $\frac{U_{\ga,s}^{2^*(s)-2}}{|x|^s}\in L^{\tfrac{N}{2}}(\Rn)$, and $0<\ga<\ga_{H}$, we can prove similar results for the operator $\mathcal{L}_{\ga,s}^{-1}$ in the same spirit of the above results.

	
 
    
 
	First we claim that the weight function $\frac{(U_{\ga,s}^{\la})^{2^*(s)-2}}{|x|^s}\in L^{\frac{N}{2}}(\Rn)$, for any $\la>0$. To see this we observe that, from \cite{CC} when $\ga=0,\,s\in (0,2)$ we have,
	$$U_{0,s}(x)= C_{N,s} \left(1+|x|^{2-s}\right)^{-\tfrac{N-2}{2-s}}$$
	$U_{0,s}(x)=\frac{k_0}{(1+|x|^{2-s})^{\tfrac{N-2}{2-s}}}$, where $k_0$ is chosen so that $\|U_{0,s}\|_{\ga}=1.$\\
	
	Thus for any large $R>0$ we get
	\be\no
	\int_{\Rn}\frac{|U_{0,s}(x)|^{\tfrac{N(2-s)}{(N-2)}}}{|x|^{\tfrac{sN}{2}}}\,{\rm d}x =\int_{\Rn}\frac{|U_{0,s}(x)|^{\tfrac{N(2-s)}{(N-2)}}}{|x|^{\tfrac{sN}{2}}}\,{\rm d}x = \int_{B_{R}}\cdots \,+\,\int_{B_{R}^c} \cdots\no
	\ee
	\bea
	\int_{B_R}\frac{|U_{0,s}(x)|^{\tfrac{N(2-s)}{(N-2)}}}{|x|^{\tfrac{sN}{2}}}\,{\rm d}x &\leq& C\int_{B_{R}}\frac{{\rm d}x}{(1+|x|^{2-s})|x|^{\tfrac{sN}{2}}}\no\\
	&\leq& C\int_{B_{R}}\frac{{\rm d}x}{|x|^{\tfrac{sN}{2}}}<\infty,\qquad\text{as } s<2.\no\\
	\text{Similarly,} \int_{B_{R}^c}\frac{|U_{0,s}(x)|^{\tfrac{N(2-s)}{(N-2)}}}{|x|^{\tfrac{sN}{2}}}\,{\rm d}x &\leq& C\int_{B_{R}^c}\frac{{\rm d}x}{|x|^{(2-s)N+\tfrac{sN}{2}}}<\infty.\no
	\eea
	The last integral is finite since, $(2-s)N+\tfrac{sN}{2}=2N-\tfrac{sN}{2}= N+(1-\tfrac{s}{2})N>N.$ Thus the weight function $\frac{U_{0,s}^{2^*(s)-2}}{|x|^s}\in L^{\tfrac{N}{2}}(\Rn)$.
	
	\begin{remark}
		A spherical harmonic $P_n$ of order $n$ is the restriction to $\mathbb{S}^{N-1}$ of a homogeneous harmonic polynomial of degree $n$. Moreover, it satisfies 
		$$-\De_{\mathbb{S}^{N-1}}P_n=\la_n P_n,$$
		for all $n\in\mathbb{N}_0,$ where $\la_n= \left(n^2+(N-2)n\right)$ are the eigenvalues of the Laplace Beltrami operator on $\mathbb{S}^{N-1}$ with corresponding eigenspace dimension $c_n$. We note that $\la_n\geq 0,\,\la_0=0,\,c_0=1,\,c_1=N$ and 
		\be\no
         c_n= {N+n-1\choose n} - {N+n-3\choose n-2}\,\text{ for }n\geq 2.
         \ee
	\end{remark}
	\begin{lemma}\label{STLgar}
		Let $\eta_i,\,i=1,\,2,\,3,\,\cdots,$ denote the eigenvalues of $\mathcal{L}_{\ga,s}$ given in increasing order. Then, $\eta_1=\mu_{\ga,s}(\Rn)^{\tfrac{2^*(s)}{2}}$ is simple with eigenfunction $U_{\ga,s}^{\la}$ and $\eta_2=(2^*(s)-1)\mu_{\ga,s}(\Rn)^{\tfrac{2^*(s)}{2}}$ and the corresponding eigenspace is again one dimensional spanned by $\left\{\frac{{\rm d}}{{\rm d}\la}\bigg{|}_{\la=1}U_{\ga,s}^{\la}\right\}$. Furthermore, the eigenvalues do not depend on $\la$.
	\end{lemma}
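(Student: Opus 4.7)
The plan is to identify $\eta_1$ via the Hardy--Sobolev inequality, identify $\eta_2$ via the second variation of the associated Rayleigh quotient at the minimizer, and then conclude simplicity of each eigenspace separately using positivity (for $\eta_1$) and the nondegeneracy theorem of Felli--Robert (for $\eta_2$). Scale invariance will fall out at the end from the conformal transformation properties of $U_{\gamma,s}$.

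First, recall that since $\frac{U_{\gamma,s}^{2^*(s)-2}}{|x|^s}\in L^{N/2}(\Rn)$, the preceding appendix results give that $\mathcal{L}_{\gamma,s}^{-1}$ is a compact self-adjoint positive operator on $L^2\!\left(\frac{U_{\gamma,s}^{2^*(s)-2}}{|x|^s},\Rn\right)$, so $\mathcal{L}_{\gamma,s}$ has a discrete spectrum $0<\eta_1\leq\eta_2\leq\cdots\to\infty$ with the usual Courant--Fischer characterization. To compute $\eta_1$, I would apply H\"older's inequality with conjugate exponents $\frac{2^*(s)}{2^*(s)-2}$ and $\frac{2^*(s)}{2}$ to get
\[
\int_{\Rn}\frac{U_{\gamma,s}^{2^*(s)-2}}{|x|^s}v^2\,{\rm d}x\leq \left(\int_{\Rn}\frac{U_{\gamma,s}^{2^*(s)}}{|x|^s}\,{\rm d}x\right)^{\tfrac{2^*(s)-2}{2^*(s)}}\!\left(\int_{\Rn}\frac{|v|^{2^*(s)}}{|x|^s}\,{\rm d}x\right)^{\tfrac{2}{2^*(s)}},
\]
then combine with \eqref{HSI} and the identity $\int_{\Rn}\frac{U_{\gamma,s}^{2^*(s)}}{|x|^s}\,{\rm d}x=\mu_{\gamma,s}(\Rn)^{-2^*(s)/2}$ (which follows from the normalization $\|U_{\gamma,s}\|_\gamma=1$ and the fact that $U_{\gamma,s}$ is a minimizer) to get $R(v)\geq \mu_{\gamma,s}(\Rn)^{2^*(s)/2}$. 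Equality holds for $v=U_{\gamma,s}$ by \eqref{MEQWHSC}, so $\eta_1=\mu_{\gamma,s}(\Rn)^{2^*(s)/2}$.

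For simplicity of $\eta_1$: any first eigenfunction $\phi$ may be replaced by $|\phi|$ (which has the same Rayleigh quotient by Kato's inequality / the fact that $\|\nabla|\phi|\|_2\le\|\nabla\phi\|_2$ while the weighted $L^2$ and Hardy terms only see $|\phi|$), so we may assume $\phi\geq 0$; then strong maximum principle away from the origin plus Krein--Rutman applied to the compact positive operator $\mathcal{L}_{\gamma,s}^{-1}$ forces $\phi=cU_{\gamma,s}$. For $\eta_2$, I would use the second-order Taylor expansion of the Hardy--Sobolev quotient $J(u)=\|u\|_\gamma^2\|u\|_{L^{2^*(s)}(\Rn,|x|^{-s})}^{-2}$ at its minimizer $U_{\gamma,s}$. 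Testing $J(U_{\gamma,s}+tv)\geq J(U_{\gamma,s})=\mu_{\gamma,s}(\Rn)$ on variations $v$ satisfying $\langle v,U_{\gamma,s}\rangle_\gamma = 0$ (equivalently $\int U_{\gamma,s}^{2^*(s)-1}|x|^{-s}v=0$ thanks to \eqref{MEQWHSC}) and keeping the $t^2$ term yields
\[
\|v\|_\gamma^2\geq (2^*(s)-1)\,\mu_{\gamma,s}(\Rn)^{2^*(s)/2}\!\int_{\Rn}\!\frac{U_{\gamma,s}^{2^*(s)-2}}{|x|^s}v^2\,{\rm d}x
\]
for every such $v$, hence $\eta_2\geq (2^*(s)-1)\mu_{\gamma,s}(\Rn)^{2^*(s)/2}$. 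The reverse inequality is produced by the explicit eigenfunction: differentiating $U_{\gamma,s}^\lambda$ in $\lambda$ at $\lambda=1$ shows $Z_{\gamma,s}=\tfrac{{\rm d}}{{\rm d}\lambda}|_{\lambda=1}U_{\gamma,s}^\lambda$ solves $\mathcal{L}_{\gamma,s}Z_{\gamma,s}=(2^*(s)-1)\mu_{\gamma,s}(\Rn)^{2^*(s)/2}Z_{\gamma,s}$, and $Z_{\gamma,s}\perp U_{\gamma,s}$ in $\|\cdot\|_\gamma$ since $\tfrac{{\rm d}}{{\rm d}\lambda}\|U_{\gamma,s}^\lambda\|_\gamma^2\equiv 0$. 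One-dimensionality of the corresponding eigenspace is exactly the Felli--Robert nondegeneracy theorem quoted just above this lemma (after accounting for the harmless rescaling of $U_{\gamma,s}$ induced by the normalization $\|U_{\gamma,s}\|_\gamma=1$, which only changes the constant in front, not the kernel).

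Finally, scale invariance: the dilation $T_\lambda\phi(x):=\lambda^{-N/2}\phi(x/\lambda)$ is a unitary map on $L^2(U_{\gamma,s}^{2^*(s)-2}|x|^{-s}\,{\rm d}x)$ to $L^2((U_{\gamma,s}^\lambda)^{2^*(s)-2}|x|^{-s}\,{\rm d}x)$ and it intertwines $\mathcal{L}_{\gamma,s}$ with the analogous operator built from $U_{\gamma,s}^\lambda$, so spectra coincide. The main obstacle I anticipate is the bookkeeping in the second-variation step: one must carefully expand $\left(\int |U+tv|^{2^*(s)}|x|^{-s}\right)^{-2/2^*(s)}$ to order $t^2$ and keep track of the constraint $\langle v,U_{\gamma,s}\rangle_\gamma=0$ so that the cross-term vanishes, otherwise the sharp constant $(2^*(s)-1)$ will not emerge cleanly.
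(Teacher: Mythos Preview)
Your proof is correct and complete, but it follows a genuinely different route from the paper's.

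The paper proceeds by separation of variables: writing $v=R(r)Y(\theta)$ in spherical coordinates decouples the eigenvalue problem into the spherical harmonic equation $-\Delta_{\mathbb{S}^{N-1}}Y=\mu Y$ and a one-dimensional radial Sturm--Liouville problem depending on the spherical mode $\mu$. The first two eigenvalues and their eigenspaces are then read off from the radial ODE (using standard Sturm--Liouville oscillation theory for simplicity), and independence of $\lambda$ comes from the invariance of the Rayleigh quotient under the dilation $v\mapsto v^\lambda$. Your approach is instead purely variational: $\eta_1$ via H\"older plus the Hardy--Sobolev inequality, simplicity of $\eta_1$ via positivity and Krein--Rutman, $\eta_2$ via the second variation of the Hardy--Sobolev quotient at its minimizer, and simplicity of $\eta_2$ by quoting the Felli--Robert nondegeneracy theorem. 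The paper's ODE route is more self-contained (it does not need Felli--Robert as a black box, and in principle gives information about higher eigenvalues through Sturm--Liouville theory), whereas your argument is shorter and avoids any ODE analysis, at the cost of importing the nondegeneracy result.

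One small slip: in your final scale-invariance step the dilation should be $T_\lambda\phi(x)=\lambda^{-(N-2)/2}\phi(x/\lambda)$ (the $\dot{H}^1$-isometry, cf.\ the paper's $T_\lambda$ in the symmetries subsection), not $\lambda^{-N/2}$. With the correct exponent the intertwining and unitarity claims hold as you state; with $\lambda^{-N/2}$ the weighted $L^2$ norm picks up a spurious factor of $\lambda^{-2}$. This does not affect the conclusion, since the Rayleigh quotient is homogeneous of degree zero anyway.
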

	
	\begin{proof}
		Consider the Rayleigh quotient
		$$R_{U_{\ga,s}}(v)=\frac{\|v\|_{\ga}^2}{\int_{\Rn}\frac{|U_{\ga,s}|^{2^*(s)-2}}{|x|^s}|v|^2\,{\rm d}x}.$$
		If we consider, the transformation $U^{\la}(x)=\la^{\tfrac{N-2}{2}}U(\la x),$ for $\la>0$, then 
		$$R_{U_{\ga,s}^{\la}}(v^{\la})=R_{U_{\ga,s}}(v)\,\text{ for every }v\in\dot{H}^1(\Rn).$$
		The scaling argument above shows that the eigenvalues do not depend on $\la$. In particular, we can assume $\la=1$, and hence $U_{\ga,s}^{\la}=U_{\ga,s}$. Therefore, we would like to solve the eigenvalue problem 
		$$-\De v -\frac{\ga}{|x|^2}v = \eta \frac{U_{\ga,s}^{2^*(s)-2}}{|x|^s}v\qquad\text{ in }\dot{H}^1(\Rn).$$
		Putting $v=R(r)Y(\theta)$ and using spherical co-ordinates we get
		$$-\De \equiv -\De_{r}-\frac{1}{r^2}\De_{\mathbb{S}^{N-1}},$$
		where $-\De_{\mathbb{S}^{N-1}}$ denotes the Laplace-Beltrami operator on $\mathbb{S}^{N-1}.$ Therefore, the separation of variables yields,
		\bea
		-\De_r v-\frac{1}{r^2}\De_{\mathbb{S}^{N-1}}v-\frac{\ga}{r^2}v &=&\la \frac{U_{\ga,s}^{2^*(s)-2}}{r^s}v\no\\
		-\left(\frac{\partial^2}{\partial r^2}+\frac{N-1}{r}\frac{\partial}{\partial r}+\frac{\ga}{r^2}\right)R(r)Y(\theta)+ \frac{R(r)}{r^2}\De_{\mathbb{S}^{N-1}}Y(\theta)&=& \la\frac{|U_{\ga,s}|^{2^*(s)-2}}{r^s}R(r)Y(\theta)\no\\
		\frac{R''(r)+\tfrac{N-1}{r}R'(r)+\tfrac{\ga}{r^2}R(r)+\eta\tfrac{U_{\ga,s}^{2^*(s)-2}}{r^s}R(r)}{r^{-2}R(r)}&=&\frac{-\De_{\mathbb{S}^{N-1}}Y(\theta)}{Y(\theta)}=\mu\text{ (say)}.\no
		\eea
		Thus we have,
		\bea
		&-\De_{\mathbb{S}^{N-1}}Y=\mu Y\qquad\text{ on }\mathbb{S}^{N-1}\label{A.1}\\
		&R''+\frac{N-1}{r}R'+\frac{(\ga-\mu)}{r^2}R+\eta \frac{U_{\ga,s}^{2^*(s)-2}}{r^s}R=0,\qquad \text{on }\R_{+}.\label{A.2}
		\eea
		Let us begin with \eqref{A.1}. The first eigenvalue is $\mu_1=0$ and the corresponding eigenfunction is a constant function. The second eigenvalue is $\mu_2=(N-1)$ and the corresponding eigenspace is $N$-dimensional and spanned by $x_i,\,i=1,\,2,\cdots,\,N.$
		
		The second eigenvalue problem is regular at the origin and infinity. If $\mu=\mu_1=0,$ then $\eta_1^1=\mu_{\ga,s}(\Rn)^{\tfrac{2^*(s)}{2}}$ and the corresponding eigenfunction is $U_{\ga,s}$. The second eigenvalue is $\eta_2^1=(2^*(s)-1)\mu_{\ga,s}(\Rn)^{\tfrac{2^*(s)}{2}}$ and the corresponding eigenspace is one dimensional and spanned by $R_2^1=\frac{{\rm d}}{{\rm d}\la}\big{|}_{\la=1}U_{\ga,s}^{\la}.$ The last claim follows from the standard theory for Sturm-Liouville problems with separable boundary conditions. Note that $R_{2}^{1}$ has exactly one zero.
			
			\medspace
			If $\mu=\mu_2=N-1,$ then $\eta_1^2=(2^*(s)-1)\mu_{\ga,s}(\Rn)^{\tfrac{2^*(s)}{2}}$ and $R_1^2=\frac{{\rm d}}{{\rm d}\la}\big{|}_{\la=1}U_{\ga,s}^{\la}$ is the corresponding eigenfunction.
			
			Finally, for $\mu>\mu_2$, monotonicity yields, $\eta_1^k>(2^*(s)-1)\mu_{\ga,s}(\Rn)^{\tfrac{2^*(s)}{2}},$ for all $k>2.$
			
			Since, $\{R_m^k Y_m\}_{m,k}$ span $L^2\left(\frac{U_{\ga,s}^{2^*(s)-2}}{|x|^s},\Rn\right)$ the claim follows from the fact above.
	\end{proof}

	\medskip
	
	\subsection{Interaction estimates}
 
	In this appendix, we like to compute integral quantities involving two bubbles. Fix $N\geq 3,\,0<\ga<\ga_{H},\,s\in (0,2)$. Here, 
	\bea
	U_{\ga,s}(x) &:=& C_{N,\ga,s}\left(|x|^{\tfrac{2-s}{N-2}\ba_{-}(\ga)} + |x|^{\tfrac{2-s}{N-2}\ba_{+}(\ga)}\right)^{-\left(\tfrac{N-2}{2-s}\right)},\no\\
	U_{\ga,s}^{\la}(x) &:=& \la^{\tfrac{N-2}{2}}U_{\ga,s}(\la x),\qquad\text{for }\la>0,\no\\
	\beta_{\pm}(\ga)&:=& \frac{N-2}{2}\pm \var,\no\\
	\var &:=& \sqrt{\left(\frac{N-2}{2}\right)^2-\ga}.\no
	\eea
	First, we consider two bubbles where the first one $\la=1$ and the other has scaling factor $\la\in (0,1]$.
	\begin{lemma}\label{BI}
		Given $N\geq 3,$ let us fix $\theta+\eta=2^*(s),$ with $\theta,\,\eta\geq 0,\,\la\in(0,1],$
		\be\no
		\int_{\Rn}\frac{U_{\ga,s}(x)^{\theta} U_{\ga,s}^{\la}(x)^{\eta}}{|x|^s}\,{\rm d}x\approx_{N,\ga,s}\begin{cases}
			\la^{\var\min\{\theta,\,\eta\}}\qquad\text{if }|\theta-\eta|\geq \de\\
			\la^{\var\tfrac{N-s}{N-2}}\log(\la^{-1})\,\,\text{if }\theta=\eta=\frac{2^*(s)}{2},
		\end{cases}
		\ee
		for some $\delta>0$.
	\end{lemma}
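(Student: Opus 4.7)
The plan is to exploit the explicit form of $U_{\ga,s}$ to extract sharp pointwise asymptotics, and then to evaluate the integral by splitting $\Rn$ into annular regions where every factor in the integrand reduces (up to constants depending only on $N,\ga,s$) to a pure power of $|x|$. The first step is to observe that, since $\ba_{-}(\ga)<\ba_{+}(\ga)$, a direct comparison of the two monomials in the bracket of \eqref{HSBU} gives
$$U_{\ga,s}(x)\ \approx\ \begin{cases}|x|^{-\ba_{-}(\ga)},& |x|\le 1,\\ |x|^{-\ba_{+}(\ga)},& |x|\ge 1.\end{cases}$$
Substituting into $U_{\ga,s}^{\la}(x)=\la^{(N-2)/2}U_{\ga,s}(\la x)$ and using $\tfrac{N-2}{2}-\ba_{\pm}(\ga)=\mp\var$, this immediately yields
$$U_{\ga,s}^{\la}(x)\ \approx\ \begin{cases}\la^{\var}\,|x|^{-\ba_{-}(\ga)},& |x|\le 1/\la,\\ \la^{-\var}\,|x|^{-\ba_{+}(\ga)},& |x|\ge 1/\la,\end{cases}$$
with the transition taking place at the scale $|x|\sim 1/\la$.

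Since $\la\in(0,1]$ forces $1\le 1/\la$, I would then decompose $\Rn=A_1\cup A_2\cup A_3$ into the annular regions $A_1=\{|x|\le 1\}$, $A_2=\{1\le|x|\le 1/\la\}$ and $A_3=\{|x|\ge 1/\la\}$. On each piece both bubbles are replaced by their single-power approximants, so the integrand is radial and reduces to a monomial in $r=|x|$. The key algebraic identity, which follows from $\theta+\eta=2^{*}(s)$, is
$$\theta\ba_{+}(\ga)+\eta\ba_{-}(\ga)=(N-s)+(\theta-\eta)\var.$$
A direct computation then gives, in radial coordinates: on $A_1$ the integrand is $\la^{\var\eta}\,r^{-1+\tfrac{2(N-s)\var}{N-2}}$, the exponent exceeds $-1$, and the radial integral converges near $0$ to yield a contribution of order $\la^{\var\eta}$; on $A_3$ the integrand is $\la^{-\var\eta}\,r^{-1-\tfrac{2(N-s)\var}{N-2}}$, so integrating over $[1/\la,\infty)$ produces an extra factor $\la^{2(N-s)\var/(N-2)}$, which combines to a contribution of order $\la^{\var\theta}$; and on $A_2$ the radial integrand is $\la^{\var\eta}\,r^{-1-(\theta-\eta)\var}$, so
$$\la^{\var\eta}\int_{1}^{1/\la} r^{-1-(\theta-\eta)\var}\,dr\ \approx\ \begin{cases}\la^{\var\min\{\theta,\eta\}} & \text{if }\theta\neq\eta,\\ \la^{\var\eta}\log(\la^{-1}) & \text{if }\theta=\eta.\end{cases}$$

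Summing the three contributions and using the elementary fact that $\la^{\var\theta}\le\la^{\var\eta}$ precisely when $\theta\ge\eta$ (because $\la\le 1$), the total is comparable to $\la^{\var\min\{\theta,\eta\}}$ whenever $|\theta-\eta|\ge\de$, while in the balanced case $\theta=\eta=2^{*}(s)/2=\tfrac{N-s}{N-2}$ the logarithmic contribution from $A_2$ dominates the bounded contributions from $A_1$ and $A_3$ and produces the factor $\la^{\var(N-s)/(N-2)}\log(\la^{-1})$.

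The main obstacle, and the source of the dichotomy in the statement, is the intermediate annulus $A_2$: its radial exponent $-1-(\theta-\eta)\var$ crosses the scale-invariant threshold $-1$ exactly at $\theta=\eta$. For $\theta\neq\eta$ the primitive carries a constant of order $1/(|\theta-\eta|\var)$ that blows up as $\theta\to\eta$, which is precisely why one imposes the quantitative separation $|\theta-\eta|\ge\de$ in order to absorb this constant into the implicit $\approx_{N,\ga,s}$, and why the equality case converts the power law into a logarithm. A minor technical point is the uniformity of the power-law bounds in the narrow transition zones $|x|\sim 1$ and $|x|\sim 1/\la$; this is harmless because on any bounded annulus the two monomials in $U_{\ga,s}$ (and in $U_{\ga,s}^{\la}$) are comparable to positive constants depending only on $N,\ga,s$.
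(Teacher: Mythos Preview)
Your proposal is correct and follows essentially the same approach as the paper: both arguments replace the bubbles by their pure-power asymptotics, pass to radial coordinates, and reduce the problem to the elementary integral $\la^{\var\eta}\int_{1}^{1/\la}r^{-1-(\theta-\eta)\var}\,dr$ together with the endpoint contribution $\la^{\var\theta}$. The only cosmetic difference is that you split $\Rn$ explicitly into three annuli $A_1,A_2,A_3$, while the paper splits into $B(0,\la^{-1})$ and its complement and then tacitly drops the inner core $\{|x|\le 1\}$ when estimating the first piece; your bookkeeping is if anything slightly clearer, and your explicit remark that the implied constant carries a factor $1/(|\theta-\eta|\var)$ is precisely the reason the hypothesis $|\theta-\eta|\ge\de$ appears in the statement.
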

	
	\begin{proof}
		On $B(0,\la^{-1}),\,|\la x|<1,\,\la\in (0,1)$.
		\be\no
		U_{\ga,s}^{\la}(x)\approx_{N,\ga,s} \la^{\tfrac{N-2}{2}-\beta_{-}(\ga)}|x|^{-\beta_{-}(\ga)}\qquad \text{on }B(0,\la^{-1}).
		\ee
		On $B(0,\la^{-1})^c,\,|x|\geq \la^{-1},$ thus $\tfrac{1}{|x|}\leq \la<1$ as $\la\in (0,1]$. Then
		\bea
		U_{\ga,s}(x) &\approx_{N,\ga,s}& \frac{1}{|x|^{\beta_{-}(\ga)}\left(1+|x|^{2\varepsilon \tfrac{2-s}{N-2}}\right)^{\tfrac{N-2}{2-s}}}\no\\
		&\approx_{N,\ga,s}& \frac{1}{|x|^{\beta_{-}(\ga)+2\varepsilon}\left(1+\frac{1}{|x|^{2\varepsilon \tfrac{2-s}{N-2}}}\right)^{\tfrac{N-2}{2-s}}}\approx_{N,\ga,s}|x|^{-\beta_{+}(\ga)}\quad\text{on }B(0,\la^{-1})^c.\no
		\eea
		On the other hand, on $B(0,\la^{-1})^c,$
		\bea
		U_{\ga,s}^{\la}(x) &\approx_{N,\ga,s}& \frac{\la^{\tfrac{N-2}{2}}}{|\la x|^{\beta_{-}(\ga)}\left(1+|\la x|^{2\varepsilon\tfrac{2-s}{N-2}}\right)^{\tfrac{N-2}{2-s}}}\no\\
		&\approx_{N,\ga,s}&\la^{\tfrac{N-2}{2}-\beta_{+}(\ga)}|x|^{-\ba_{+}(\ga)}\no\\
		&\approx_{N,\ga,s}&\la^{-\varepsilon}|x|^{-\beta_{+}(\ga)}.\no
		\eea
		Since we have $\theta+\eta = 2^*(s),$ we get
		\bea
		&\,&\int_{\Rn}\frac{U_{\ga,s}(x)^{\theta} U_{\ga,s}^{\la}(x)^{\eta}}{|x|^s}\,{\rm d}x =\int_{B(0,\la^{-1})}\frac{U_{\ga,s}(x)^{\theta} U_{\ga,s}^{\la}(x)^{\eta}}{|x|^s}\,{\rm d}x+	\int_{B(0,\la^{-1})^c}\frac{U_{\ga,s}(x)^{\theta} U_{\ga,s}^{\la}(x)^{\eta}}{|x|^s}\,{\rm d}x\no\\
		&\approx_{N,\ga,s}& \bigints_{t=0}^{\la^{-1}}\frac{\la^{\varepsilon\eta}t^{N-1-s}}{t^{\beta_{-}(\ga)\tfrac{2(N-s)}{2-s}}\left(1+t^{2\varepsilon\tfrac{2-s}{N-2}}\right)^{\tfrac{N-2}{2-s}\theta}}\,{\rm d}t + \bigints_{t=\la^{-1}}^{\infty}t^{N-1-s-\beta_{+}(\ga)\tfrac{2(N-s)}{(N-2)}}\la^{-\var\eta}\,{\rm d}t\no\\
		&\approx_{N,\ga,s}& \la^{\varepsilon \eta}\int_{t=1}^{\la^{-1}}t^{N-1-s-\beta_{-}(\ga)2^*(s)-2\varepsilon\theta}\,{\rm d}t + \la^{\varepsilon\theta}\no
		\eea
		Thus whenever $\theta=\frac{N-s-\ba_{-}(\ga)2^*(s)}{2\var}=\frac{N-s}{N-2}=\frac{2^*(s)}{2}$,  $\log$ term will appear.
		
		\begin{itemize}
			\item If $\theta\geq \eta+\delta$ (for some $\delta>0$), the above expression becomes comparable to $\la^{\var\eta}$.
			\item If $\eta\geq \theta+\delta$ (for some $\delta>0$), the above expression becomes comparable to $\la^{\var\theta}$.
			\item If $\theta=\eta=\frac{2^*(s)}{2}$, the above expression becomes comparable to $\la^{\var\tfrac{N-s}{N-2}}\log (\la^{-1})$.
		\end{itemize}
	\end{proof}

    \subsection{Symmetries of the problem}
    For $\la>0$, define,
    \be\no
        T_{\la}:C^{\infty}_{c}(\Rn\setminus \{0\}) \to C^{\infty}_{c}(\Rn\setminus \{0\})
        \ee
        by $T_{\la}(\phi)(x):=\la^{-\tfrac{N-2}{2}}\phi(\tfrac{x}{\la})$. The operator $T_{\la}$ satisfies the following properties:
        \bea
        \int_{\Rn}\frac{T_{\la}(U_{\ga,s})^{\al_1}T_{\la}(U_{\ga,s})^{\al_2}}{|x|^s}\,{\rm d}x &=& \int_{\Rn}\frac{(U_{\ga,s})^{\al_1}(U_{\ga,s})^{\al_2}}{|x|^s}\,{\rm d}x,\quad\text{ for }\al_1,\,\al_2>1,\text{with }\al_1+\al_2=2^*(s),\no\\
        \int_{\Rn}\frac{|T_{\la}(U_{\ga,s})|^{2^*(s)}}{|x|^s}\,{\rm d}x &=& \int_{\Rn}\frac{|U_{\ga,s}|^{2^*(s)}}{|x|^s}\,{\rm d}x,\no\\
        \|T_{\la}(U_{\ga,s})\|_{\ga}^2 &=& \|U_{\ga,s}\|_{\ga}^2,\no\\
        U_{\ga,s}^{\la}=T_{\la}(U_{\ga,s}^{1}) &\text{and}& \partial_{\la}U_{\ga,s}^{\la} = \frac{1}{\la}T_{\la}(\partial_{\la} U_{\ga,s}^{1}).\no
        \eea
        The operator $T_{\la}$ plays a significant role in the study of Hardy-Sobolev inequality as they do not change the two quantities $\|\phi\|_{\ga}$ and $\|\phi\|_{L^{2^*(s)}(\Rn,|x|^{-s})}$. These symmetries help us to consider $U_{\ga,s}^{1}$ instead of considering generic bubble $U_{\ga,s}^{\la}$.

        \medspace
        Using the above symmetries it we can easily show the following :

    \begin{proposition}\label{BubInt}
    For $N\geq 3,$ let $U:=U_{\ga,s}^{\la_1}$ and $V:=U_{\ga,s}^{\la_2}$ be two bubbles with $\la_1\geq \la_2$ and define $Q:=Q(\la_1,\la_2)=\frac{\la_2}{\la_1}$. Then for any fixed $\de>0$ we have,
    \be\no
		\int_{\Rn}\frac{U(x)^{\theta} V(x)^{\eta}}{|x|^s}\,{\rm d}x\approx_{N,\ga,s}\begin{cases}
			Q^{\var\min\{\theta,\,\eta\}}\qquad\text{if }|\theta-\eta|\geq \de\\
			Q^{\var\tfrac{N-s}{N-2}}\log(\la^{-1})\,\,\text{if }\theta=\eta=\frac{2^*(s)}{2},
		\end{cases}
		\ee
        
  \end{proposition}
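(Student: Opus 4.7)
The plan is to reduce the two-parameter interaction estimate to the one-parameter case already established in Lemma \ref{BI} by exploiting the scaling symmetry described in the preceding subsection. Concretely, I would change variables $y = \la_1 x$ in the integral $\int_{\Rn} U(x)^\theta V(x)^\eta |x|^{-s}\,dx$ and track the $\la_1$-powers that appear from the Jacobian, the weight $|x|^{-s}$, and the two bubbles.

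First, I would write $U(x) = U_{\ga,s}^{\la_1}(x) = \la_1^{(N-2)/2}U_{\ga,s}(\la_1 x)$ and $V(x) = U_{\ga,s}^{\la_2}(x) = \la_2^{(N-2)/2}U_{\ga,s}(\la_2 x)$. After substituting $y = \la_1 x$, the factor from $U$ becomes $\la_1^{(N-2)\theta/2} U_{\ga,s}(y)^\theta$, while the factor from $V$ becomes $\la_2^{(N-2)\eta/2}U_{\ga,s}(Qy)^\eta = \la_1^{(N-2)\eta/2} U_{\ga,s}^{Q}(y)^\eta$ after rewriting $U_{\ga,s}(Qy) = Q^{-(N-2)/2}U_{\ga,s}^{Q}(y)$. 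The Jacobian contributes $\la_1^{-N}$ and the weight contributes $\la_1^{s}$. Using the defining relation $\theta+\eta = 2^{*}(s) = 2(N-s)/(N-2)$, the total power of $\la_1$ is
\[
\tfrac{N-2}{2}(\theta+\eta) + s - N = (N-s) + s - N = 0,
\]
so all $\la_1$-factors cancel and I obtain the identity
\[
\int_{\Rn}\frac{U(x)^{\theta} V(x)^{\eta}}{|x|^s}\,{\rm d}x \;=\; \int_{\Rn}\frac{U_{\ga,s}(y)^{\theta}\,U_{\ga,s}^{Q}(y)^{\eta}}{|y|^{s}}\,{\rm d}y.
\]
This is exactly the scale invariance encoded in the symmetry properties of $T_{\la}$ recalled just before the proposition; alternatively it can be read off directly from $T_{\la_1^{-1}}(U_{\ga,s}^{\la_1}) = U_{\ga,s}^1$ and $T_{\la_1^{-1}}(U_{\ga,s}^{\la_2}) = U_{\ga,s}^{Q}$ together with the invariance of the weighted $L^{2^*(s)}$-norm under $T_{\la}$.

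Since $\la_1 \geq \la_2$ we have $Q = \la_2/\la_1 \in (0,1]$, so the right-hand side is precisely the quantity estimated in Lemma \ref{BI} with scaling parameter $\la = Q$. Applying that lemma yields
\[
\int_{\Rn}\frac{U(x)^{\theta} V(x)^{\eta}}{|x|^{s}}\,{\rm d}x \;\approx_{N,\ga,s}\; \begin{cases} Q^{\var\min\{\theta,\eta\}} & \text{if } |\theta-\eta|\geq \de, \\ Q^{\var(N-s)/(N-2)}\log(Q^{-1}) & \text{if } \theta=\eta = 2^{*}(s)/2, \end{cases}
\]
which is the claim. There is no substantive obstacle: the only delicate point is the bookkeeping of exponents to confirm that the scaling factor of $\la_1$ collapses to $1$, which is essentially forced by the critical nature of the exponent $2^{*}(s)$ and the weight $|x|^{-s}$—this is precisely the conformal invariance that makes the Hardy-Sobolev inequality invariant under the dilation $T_{\la}$.
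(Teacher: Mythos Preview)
Your proof is correct and follows exactly the approach indicated in the paper, which simply states ``Using the above symmetries it we can easily show the following'' and gives no further details. You have supplied the explicit change of variables and exponent bookkeeping that the paper omits, correctly reducing to Lemma~\ref{BI} with parameter $\la = Q$; you also tacitly fix the paper's typo $\log(\la^{-1})$ to $\log(Q^{-1})$ in the equal-exponent case.
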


        
	\begin{remark}
		The above results match the results of Figalli-Glaudo \textup{(\cite{FG}, Lemma $B.1$ and Proposition $B.2$)} for the case $\ga=s=0$.
	\end{remark}

    \medskip
    
	{\bf Acknowledgement:} S. Chakraborty is supported by IPD Fellowship from the Department of Mathematics, IIT Bombay. The author would like to thank Prof. Saikat Mazumdar, Department of Mathematics, IIT Bombay, for many discussions and for sharing his thoughtful insights during the writing of this article.


\begin{thebibliography}{XX}
    \bibitem{SA}{\sc Aryan, S.}, Stability of Hardy Littlewood Sobolev Inequality under Bubbling, {\em Calc. Var. Partial Differential Equations} 62 (2023), no.8, Paper No. 223., arXiv:2109.12610 [math.AP].
     
	\bibitem{AT}{\sc Aubin, T.}, Problèmes isopérimétriques de Sobolev. {\em J. Differ. Geom.} 11 (1976), no. 4, 573–598.

    \bibitem{BGKM}{\sc Bhakta, M.; Ganguly, D.; Karmakar, D.; Mazumdar, S.}, {\em Sharp quantitative stability of Poincare-Sobolev inequality in the hyperbolic space and applications to fast diffusion flows}, 
	\href{https://doi.org/10.48550/arXiv.2207.11024}{arxiv}.
	
	\bibitem{BGKM1}{\sc Bhakta, M.; Ganguly, D.; Karmakar, D.; Mazumdar, S.}, {\em Sharp quantitative stability of Struwe's decomposition of the Poincaré-Sobolev inequalities on the hyperbolic space: Part I}, \href{https://doi.org/10.48550/arXiv.2211.14618}{arXiv:2211.14618}.
		
	\bibitem{BS}{\sc Bhakta, M.; Sandeep, K.}, Hardy-Sobolev-Maz'ya type equations in bounded domains, {\em J. Differential Equations}, 247 (2009), no. 1, 119--139.
	
	\bibitem{BE}{\sc Bianchi, G.; Egnell, H.}, A note on the Sobolev inequality. {\em J. Funct. Anal.} 100 (1991), no. 1, 18–24.
	
	\bibitem{BL}{\sc Brezis, Haïm; Lieb, Elliott H.}, Sobolev inequalities with remainder terms. {\em J. Funct. Anal.} 62 (1985), no. 1, 73–86.
	
	\bibitem{CGS}{\sc Caffarelli, L. A.; Gidas, B.; Spruck, J.}, Asymptotic symmetry and local behavior of semilinear elliptic equations with critical Sobolev growth. {\em Comm. Pure Appl. Math.} 42(1989), no.3, 271–297.

    \bibitem{CKN}{\sc Caffarelli, L. A.; Kohn, R.; Nirenberg, L.}, First order interpolation inequalities with weights. {\em Compositio Math.} 53(1984), no.3, 259–275.
	
	\bibitem{CF}{\sc Carlen E.A.; Figalli A.}, Stability for a GNS inequality and the log-HLS inequality, with application to
	the critical mass Keller-Segel equation. {\em Duke Math. J}, 162 (2013), no. 3, 579–625.
	
	\bibitem{CW}{\sc Catrina, F.; Wang, Z.Q.}, On the Caffarelli-Kohn-Nirenberg inequalities: sharp constants, existence (and nonexistence), and symmetry of extremal functions. {\em Comm. Pure Appl. Math.}, 54(2001), no.2, 229–258.

    \bibitem{CC}{\sc Chou, K. S.; Chu, C. W.}, On the best constant for a weighted Sobolev-Hardy inequality.{\em J. London Math. Soc.} (2) 48 (1993), no. 1, 137–151.
 
    \bibitem{CA}{\sc Cianchi A.}, A quantitative Sobolev inequality in BV, {\em J. Funct. Anal}, 237 (2006), no. 2, 466–481.
	
	\bibitem{CFMP}{\sc Cianchi A.; Fusco N.; Maggi F.; Pratelli A.}, The sharp Sobolev inequality in quantitative form. {\em J. Eur. Math. Soc. \textup{(JEMS)}} 11 (2009), no. 5, 1105–1139.

	
	\bibitem{CFM}{\sc Ciraolo G.; Figalli A.; Maggi F.}, A Quantitative Analysis of Metrics on Rn with Almost Constant Positive Scalar Curvature, with Applications to Fast Diffusion Flows, {\em International Mathematics Research Notices}, Volume 2018, Issue 21, November 2018, Pages 6780–6797, https://doi.org/10.1093/imrn/rnx071.

    \bibitem{DSW}{\sc Deng B.; Sun L.; Wei J.}, Sharp quantitative estimates of Struwe's Decomposition. {\em arXiv:2103.15360}.
	
	\bibitem{DWY}{\sc Ding, W. Y.}, On a conformally invariant elliptic equation on Rn. {\em Comm. Math. Phys.} 107(1986), no.2, 331–335.
	
	
	\bibitem{DELT}{\sc Dolbeault, J.; Esteban, M. J.; Loss, M.; Tarantello, G.},
	On the symmetry of extremals for the Caffarelli-Kohn-Nirenberg inequalities. {\em Adv. Nonlinear Stud.} 9(2009), no.4, 713–726.
	
	
	\bibitem{FG}{\sc Figalli, A.; Glaudo, F.}, On the Sharp Stability of Critical Points of the Sobolev Inequality. {\em Arch Rational Mech Anal} 237, 201–258 (2020).
	
	\bibitem{FJ}{\sc Figalli, A.; Jerison, D.}, Quantitative stability for the Brunn-Minkowski inequality. {\em Adv. Math.} 314(2017), 1–47.

    \bibitem{FN}{\sc Figalli, A.; Neumayer, R.},	Gradient stability for the Sobolev inequality: the case $p\geq 2$. {\em J. Eur. Math. Soc. \textup{(JEMS)}} 21 (2019), no.2, 319–354.

    \bibitem{FZ}{\sc Figalli, A.; Zhang, Y. R.-Y.},	Sharp gradient stability for the Sobolev inequality. {\em Duke Math. J.} 171(2022), no.12, 2407–2459.

    \bibitem{RLF}{\sc Frank, R.L.}, The sharp Sobolev inequality and its stability: An introduction. 
    \href{https://arxiv.org/abs/2304.03115}{https://doi.org/10.48550/arXiv.2304.03115}.

    \bibitem{GR}{\sc Ghoussoub, N.; Robert, F.}, Sobolev inequalities for the Hardy-Schrödinger operator: extremals and critical dimensions. {\em Bull. Math. Sci.}, 6(2016), no.1, 89–144.

    \bibitem{GY}{\sc Ghoussoub, N.; Yuan, C.}, Multiple solutions for quasi-linear PDEs involving the critical Sobolev and Hardy exponents.{\em Trans. Amer. Math. Soc.}, 352(2000), no.12, 5703–5743.

    \bibitem{GNN}{\sc Gidas, B.; Ni, Wei Ming; Nirenberg, L.}, Symmetry and related properties via the maximum principle. {\em Comm. Math. Phys.} 68(1979), no.3, 209–243.

     \bibitem{LAM}{\sc Lieb, E. H.}, Sharp constants in the Hardy-Littlewood-Sobolev and related inequalities. {\em Ann. of Math.}, (2)118(1983), no.2, 349–374.

    \bibitem{PLL}{\sc Lions, P. L.}, The concentration-compactness principle in the calculus of variations. The limit case. I. {\em Rev. Mat. Iberoamericana}, 1(1985), no.1, 145–201.

    \bibitem{RSW}{\sc Rădulescu, V.; Smets, D.; Willem, M.}, Hardy-Sobolev inequalities with remainder terms. {\em Topol. Methods Nonlinear Anal.}, 20(2002), no.1, 145–149.

    \bibitem{R}{\sc Rey, O.}, The role of the Green's function in a nonlinear elliptic equation involving the critical Sobolev exponent.{\em J. Funct. Anal.} 89 (1990), no. 1, 1–52.
	
	\bibitem{FR}{\sc Robert, F.}, Nondegeneracy of positive solutions to nonlinear Hardy-Sobolev equations. {\em Adv. Nonlinear Anal.} 6 (2017), no. 2, 237–242.
	
	\bibitem{Sm}{\sc Smets, D.}, {\em Nonlinear Schr\"odinger equations with Hardy potential and critical nonlinearities}, {Trans. Amer. Math. Soc.} 357 (2005), no. 7, 2909--2938.
	
	\bibitem{S}{\sc Struwe, M.}, A global compactness result for elliptic boundary value problems involving limiting nonlinearities. {\em Math Z} 187, 511–517 (1984). https://doi.org/10.1007/BF01174186.
	
	\bibitem{SM}{\sc Struwe, M.}, Variational methods. Applications to nonlinear partial differential equations and Hamiltonian systems. {\em Springer-Verlag, Berlin}, 1990. xiv+244 pp.
	
	\bibitem{TG}{\sc Talenti, G.}, Best constant in Sobolev inequality. {\em Ann. Mat. Pura Appl.} (4)110(1976), 353–372.
	
	\bibitem{WW}{\sc Wei, J.; Wu, Y.}, On the stability of the Caffarelli-Kohn-Nirenberg inequality. (English summary){\em Math. Ann.} 384(2022), no.3-4, 1509–1546.
\end{thebibliography}
\end{document}